\documentclass{amsart}

\usepackage{amsmath, amssymb, amsthm, amsfonts, color}

\input xy
\xyoption{all}

\usepackage{hyperref}

\swapnumbers
\numberwithin{equation}{section}

\theoremstyle{plain}
\newtheorem{theorem}[subsection]{Theorem}
\newtheorem{lemma}[subsection]{Lemma}
\newtheorem{prop}[subsection]{Proposition}
\newtheorem{cor}[subsection]{Corollary}
\newtheorem{conj}[subsection]{Conjecture}

\theoremstyle{definition}
\newtheorem{defn}[subsection]{Definition}

\newtheorem{remark}[subsection]{Remark}
\newtheorem{exam}[subsection]{Example}
\newtheorem{ques}[subsection]{Question}

\def\BB{\mathbb{B}}
\def\CC{\mathbb{C}}

\def\FF{\mathbb{F}}
\def\GG{\mathbb{G}}

\def\LL{\mathbb{L}}

\def\NN{\mathbb{N}}

\def\QQ{\mathbb{Q}}

\def\ZZ{\mathbb{Z}}

\newcommand\cA{\mathcal{A}}
\newcommand\cB{\mathcal{B}}

\newcommand\cE{\mathcal{E}}

\newcommand\cH{\mathcal{H}}

\newcommand\cK{\mathcal{K}}
\newcommand\cL{\mathcal{L}}
\newcommand\cM{\mathcal{M}}
\newcommand\cN{\mathcal{N}}
\newcommand\cO{\mathcal{O}}
\newcommand\cP{\mathcal{P}}

\newcommand\cV{\mathcal{V}}

\newcommand\cZ{\mathcal{Z}}

\def\bI{\mathbf{I}}

\def\bP{\mathbf{P}}

\def\bR{\mathbf{R}}

\newcommand\frB{\mathfrak{B}}

\newcommand\frD{\mathfrak{D}}

\newcommand\frb{\mathfrak{b}}
\newcommand\frc{\mathfrak{c}}

\newcommand\frg{\mathfrak{g}}

\newcommand\frl{\mathfrak{l}}
\newcommand\fm{\mathfrak{m}}

\newcommand\frq{\mathfrak{q}}

\newcommand\tilW{\widetilde{W}}

\newcommand{\Bun}{\textup{Bun}}

\newcommand{\ch}{\textup{char}}

\newcommand\ev{\textup{ev}}

\newcommand{\Fl}{\textup{Fl}}
\newcommand{\fl}{f\ell}

\newcommand{\Gr}{\textup{Gr}}

\newcommand\IC{\textup{IC}}

\renewcommand{\Im}{\textup{Im}}
\newcommand{\Ind}{\textup{Ind}}

\newcommand\Irr{\textup{Irr}}

\newcommand\Lie{\textup{Lie}\ }

\newcommand\loc{\textup{loc}}

\newcommand\Perv{\textup{Perv}}
\newcommand{\Pic}{\textup{Pic}}

\newcommand\pt{\textup{pt}}

\newcommand{\reg}{\textup{reg}}
\newcommand{\rel}{\textup{rel}}

\newcommand\rs{\textup{rs}}

\newcommand\Spec{\textup{Spec}\ }
\newcommand\St{\textup{St}}

\newcommand\Sym{\textup{Sym}}

\newcommand{\Tr}{\textup{Tr}}

\newcommand{\val}{\textup{val}}

\newcommand\Hom{\textup{Hom}}

\newcommand\GL{\textup{GL}}

\newcommand\SL{\textup{SL}}
\renewcommand\sl{\mathfrak{sl}}

\newcommand{\Gm}{\GG_m}

\newcommand{\Ad}{\textup{Ad}}

\newcommand\xch{\mathbb{X}^*}
\newcommand\xcoch{\mathbb{X}_*}

\newcommand{\incl}{\hookrightarrow}
\newcommand{\isom}{\stackrel{\sim}{\to}}
\newcommand{\bij}{\leftrightarrow}
\newcommand{\surj}{\twoheadrightarrow}
\newcommand{\leftexp}[2]{{\vphantom{#2}}^{#1}{#2}}
\newcommand{\pH}{\leftexp{p}{\textup{H}}}

\newcommand{\Ql}{\QQ_{\ell}}
\newcommand{\Qlbar}{\overline{\QQ}_\ell}

\newcommand{\wt}[1]{\widetilde{#1}}
\newcommand{\wh}[1]{\widehat{#1}}
\newcommand\quash[1]{}

\newcommand{\bu}{\bullet}
\newcommand{\ov}{\overline}
\newcommand{\bs}{\backslash}
\newcommand{\tl}[1]{[\![#1]\!]}
\newcommand{\lr}[1]{(\!(#1)\!)}

\newcommand\xr{\xrightarrow}
\newcommand\op{\oplus}
\newcommand\ot{\otimes}

\newcommand{\sslash}{\mathbin{/\mkern-6mu/}}
\renewcommand\c\circ
\newcommand{\mt}{\mapsto}

\newcommand{\homog}[2]{\textup{H}_{#1}({#2})}  
\newcommand{\cohog}[2]{\textup{H}^{#1}({#2})}     
\newcommand{\cohoc}[2]{\textup{H}_{c}^{#1}({#2})}     
\newcommand{\hBM}[2]{\textup{H}^{\textup{BM}}_{#1}({#2})}  

\newcommand\upH{\textup{H}}

\renewcommand\a\alpha
\renewcommand\b\beta
\newcommand\g\gamma
\newcommand\G\Gamma
\renewcommand\d\delta
\newcommand\D\Delta

\newcommand{\io}{\iota}
\renewcommand{\th}{\theta}

\newcommand{\ph}{\varphi}
\renewcommand\r\rho

\newcommand{\Sig}{\Sigma}
\renewcommand{\t}{\tau}

\newcommand{\y}{\eta}

\renewcommand{\l}{\lambda}
\renewcommand{\L}{\Lambda}
\renewcommand\k{\kappa}
\newcommand{\om}{\omega}

\newcommand\hs{\heartsuit}
\newcommand\na{\natural}

\newcommand{\Wa}{W_{\mathrm{aff}}}

\newcommand\vn{\varnothing}
\newcommand\Spr{\textup{Spr}}
\newcommand\RT{\textup{RT}}
\newcommand\el{\textup{ell}}
\newcommand\pur{\textup{pur}}
\newcommand{\Hit}{\textup{Hit}}

\title{Perverse filtration on the cohomology of affine Springer fibers}

\dedicatory{Dedicated to George Lusztig's 80th birthday}
\author{Zhiwei Yun}
\thanks{Supported partially by the Simons Foundation.}
\thanks{{\bf Disclaimer: the author does not use AI in this research.}}
\address{Department of Mathematics, Massachusetts Institute of Technology, 77 Massachusetts Ave, Cambridge, MA 02139}
\email{zyun@mit.edu}
\date{}
\subjclass[2020]{20G99}
\keywords{Affine Springer fibers, perverse filtration}

\begin{document}

\begin{abstract}
We show that the top cohomology of any affine Springer fiber, as a Weyl group representation, contains a large part of the total cohomology of certain Springer fibers. The main ingredient of the proof is the construction of a ``perverse filtration'' on the pure part of the cohomology of affine Springer fibers.
\end{abstract}

\maketitle

\section{Introduction}

\quash{consider adding:

DAHA duality for dual affine Springer fibers, local version of \cite{YLD}. This makes better sense if there's canonicity of perverse filtration.

Canonicity of perverse filtration?

Parabolic induction for $E_{\g}$. Weaker: as $W$-mod; stronger as $\tilW$-module.

Conjecture on strata (not sure if it fits).

}

\subsection{Affine Springer fibers and affine Weyl group actions}
Let $G$ be simply-connected, almost simple algebraic group over an algebraically closed field $k$ with Lie algebra $\frg$. We assume $p=\ch(k)>2h$, where $h$ is the Coxeter number of $G$. Fix a maximal torus and a Borel subgroup $T\subset B\subset G$, and let $W=W(G,T)$ be the Weyl group.  Let $\cB\cong G/B$ be the flag variety of $G$. 

Let $F=k\lr{t}$ be the field of Laurent series over $k$. Let $LG$ be the loop group of $G$ whose $k$-points are $G(F)$, and $L^+G$ be the arc group of $G$ whose $k$-points are $G(\cO_{F})$, where $\cO_{F}=k\tl{t}$. Let $\bI\subset L^+G$ be the Iwahori subgroup, i.e., preimage of $B$ under the reduction map $L^+G\to G$. Let $\Wa=\xcoch(T)\rtimes W$ be the affine Weyl group. Let $\Gr=LG/L^{+}G$ be the affine Grassmannian of $G$ and $\Fl=LG/\bI$ be the affine flag variety.  
 
Let $\frg\lr{t}=\frg\ot F$. A regular semisimple element $\g\in \frg\lr{t}$ is called integral (resp. topologically nilpotent) if for any $f\in k[\frg]^{G}$ homogeneous of positive degree, we have $f(\g)\in  \cO_{F}$ (resp. $f(\g)\in  t \cO_{F}$). For an integral regular semisimple $\g\in \frg\lr{t}$, its affine Springer fiber as defined by Kazhdan and Lusztig \cite{KL} is the following reduced closed sub-ind-scheme of $\Fl$
\begin{equation*}
\Fl_{\g}=\{g\bI\in \Fl|\Ad(g^{-1})\g\in \Lie \bI\}.
\end{equation*}
By \cite[\S4, Proposition 1]{KL}, $\Fl_{\g}$ is equidimensional. Let $d_{\g}=\dim \Fl_{\g}$. Similarly we have the affine Springer fiber $\Gr_{\g}\subset \Gr$, which also has dimension $d_{\g}$.

Let $LG_{\g}$ be the centralizer of $\g$ under the loop group $LG$. This is the loop group attached to the centralizer $G_{\g}=C_{G_{F}}(\g)$, which is a maximal torus of $G_{F}$ over $F$. Let
\begin{equation*}
A_{\g}=\pi_{0}(LG_{\g}),
\end{equation*}
a finitely generated abelian group whose rank is the split rank of $G_{\g}$ over $F$. The action of $LG_{\g}$ on $\Fl_{\g}$ induces an action of $A_{\g}$ on the \'etale cohomology $\cohog{*}{\Fl_{\g},\Qlbar}$, where $\ell$ is a prime number different from $\ch(k)$. See \S\ref{ss:coho} for our convention for cohomology of ind-schemes. Let
\begin{equation*}
E_{\g}=\cohog{2d_{\g}}{\Fl_{\g},\Qlbar}^{A_{\g}}.
\end{equation*}
Lusztig \cite{L} constructed an action of $\Wa$ on $\cohog{2d_{\g}}{\Fl_{\g}}$ that commutes with the $A_{\g}$-action; it is an affine analog of the Springer action. In particular we have an action of $\Wa$ on $E_{\g}$. This note concerns the structure of $E_{\g}$ as a $W$-module.

\subsection{Main results}

Let $e\in \frg$ be a nilpotent element and $\cB_{e}$ be its Springer fiber. Let $\g\in e+t\frg\tl{t}$ be a generic lifting of $e$ to $\frg\tl{t}=\frg\ot_{k}\cO_{F}$. In this case, Lusztig observed in many examples that $E_{\g}$ as a $W$-module is isomorphic to the $A_{e}$-invariants of the total cohomology of the Springer fiber $\cB_{e}$, i.e., $\cohog{*}{\cB_{e}}^{A_{e}}$, where $A_{e}=\pi_{0}(G_{e})$. A precise prediction for $e$ distinguished is formulated in Lusztig \cite[Last paragraph on paper {[80]}]{LComm}. One main result of this paper is a statement of this sort that is both weaker in the above case (i.e., $\g$ is a generic lifting of a nilpotent element) and more general (i.e., it makes sense for any topological nilpotent element $\g$).    

\begin{theorem}\label{th:intro main} Let $\cO$ be a nilpotent orbit of $\frg$. Suppose the adjoint orbit of $\g$ intersects $\cO+t\frg\tl{t}$ (i.e., $\cO$ is a {\em reduction type} of $\g$). Then $E_{\g}$ contains the $W$-module ${}'H_{\cO}:=\Im(\cohog{*}{\cB}\to \cohog{*}{\cB_{e}}^{A_{e}})$ as a subquotient, where $e\in \cO$. 
\end{theorem}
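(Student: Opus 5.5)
We reduce to the case $\g\in e+t\frg\tl{t}$ with $e\in\cO$; replacing $\g$ by an $LG$-conjugate does not change $E_{\g}$ or the $\Wa$-action. Consider the evaluation map $\ev:\Fl_{\g}\cap L^{+}G/\bI\to\cB$, i.e. $g\bI\mapsto \bar g B$ for $g\in L^+G$. Its image is contained in the Springer fiber $\cB_{e}$, since $\Ad(g^{-1})\g\in\Lie\bI$ reduces mod $t$ to $\Ad(\bar g^{-1})e\in\frb$. So we get a finite-type piece $Y_{\g}:=\Fl_{\g}\cap (L^+G/\bI)$ together with a map $Y_{\g}\to\cB_{e}$. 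More precisely, we would use the family over $\frg\tl{t}$: the affine Springer fibration restricted to the fiber $L^+G/\bI$ is $\wt{\frg}$-like. Its specialization $t\mapsto 0$ degenerates $Y_\g$ onto $\cB_e$ in a way compatible with the Springer $W$-action on $\cB$ via $\cohog{*}{\cB}$.

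The plan is to produce a filtration on the pure part of $\cohog{*}{\Fl_{\g}}$, a ``perverse filtration'' as the abstract announces, with the following three properties. First, it is stable under $W$ and $A_{\g}$. Second, its associated graded is computed by a global model: $\Fl_{\g}$ is related through a product formula to a Hitchin-type fibration $f:\cM\to\cA$ whose fiber over a point is a product of affine Springer fibers. Decomposition theorem plus support theorem (Ngô) give $Rf_*\Qlbar=\bigoplus {}^pH^i[-i]$. Third, the top perverse piece restricted to the $A$-invariant part recovers the top cohomology, i.e. $E_{\g}$. Concretely I would set it up as follows. Take a curve $X$ with a point $x$, and $\Hit$ with $\bI$-level structure at $x$, which carries a $W$-action (indeed $\Wa$) on $Rf_*$ by the parabolic Springer theory of the author. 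Then choose the global point $a$ whose local singularity at $x$ is $\g$, and where elsewhere the cameral data is generic.

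The key comparison step is the following. Restricting to the locus where the level at $x$ forgets to $G$-level gives the Grothendieck simultaneous resolution picture. By the decomposition theorem, $\cohog{*}{\cB}\to\cohog{*}{\cB_e}$ appears as a restriction from a neighborhood. The perverse truncation ${}^p\tau_{\le 2d}$ of the stalk at $a$ contains the image of global classes pulled back from $\cB$ (via $\ev$ at $x$) mapped into the local cohomology. On the top perverse degree, the sheaf ${}^pH^{top}$ has stalk $\cohog{2d_\g}{\Fl_\g}^{A_\g}$ tensored with units by the support theorem. Cup products with Chern classes of the tautological line bundles $\cL_\lambda$ (which generate $\cohog{*}{\cB}$) act as maps between perverse pieces. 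Then $W$-equivariance yields that the $W$-module $\Im(\cohog{*}{\cB}\to\cohog{*}{\cB_{e}})^{A_e}$ appears as a subquotient of $\mathrm{Gr}^{P}$ at the relevant level, whose $A_\g$-invariant top part is $E_{\g}$.

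The main obstacle is the third property: showing that the classes coming from $\cohog{*}{\cB}$, which live in various cohomological degrees of $\cB_e$, land injectively (after passing to graded pieces) in the single top degree $E_\g$. I would attack it by a relative hard Lefschetz/weight argument. We use the $\Gm$-action rescaling $t$ (when $\g$ is homogeneous; in general we deform $\g$ to its reduction type via the family $\g_s$ and use that $E_{\g}$ as a $W$-module is locally constant along equisingular families). Then the specialization map from $\cohog{*}{\cB_e}$ to nearby cycles is injective on pure parts. Perverse degree shifts then match cohomological degree via the Chern class operators. This is where the construction of the perverse filtration on the pure part, and its independence of the global choices, will require the most work.
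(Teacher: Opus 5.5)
\textbf{Verdict: genuine gap.} Your framework matches the paper's. You globalize $\g$ into the parabolic Hitchin fibration, use the decomposition and support theorems and the global Springer $\Wa$-action to put a perverse filtration on $H_{\g,\pur}$, and restrict to a fiber $\pi_\g^{-1}(x)\cong\cB_e$ over $x\in\Gr_{\g,\cO}$. (Incidentally, $\Fl_\g\cap L^+G/\bI$ is already isomorphic to $\cB_e$; no degeneration is needed there.)

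\textbf{Your third property is wrong.} The step that actually proves the theorem is missing, and your description of it is incorrect. $E_\g=H^{2d_\g}_{\g,\pur}$ is the top \emph{cohomological} degree, not the top perverse piece. Its graded pieces $\Gr^P_iE_\g$ occur only for $d_\g\le i\le 2d_\g$. The top one, $\Gr^P_{2d_\g}E_\g=\eta^{d_\g}\cdot H^0_{\g,\pur}$, is one-dimensional. Likewise, the top perverse cohomology sheaf of $\cK$ does not have stalk $E_\g$. The theorem instead comes from ${}'H^{2j}_\cO$ appearing as a quotient of $\Gr^P_{2d_\g-j}{}'E_\g$, with a different graded piece for each $j$.

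\textbf{The two properties your sketch never pins down.}
\begin{enumerate}
\item[(i)] Hard Lefschetz for the specific class $\eta\in\cohog{2}{\Gr}$: $\eta^i:\Gr^P_{d_\g-i}\isom\Gr^P_{d_\g+i}$. Globally this comes from Faltings' anti-ampleness of the determinant bundle on $\cM^{\Hit,\el}$. It is transported to $\Fl_\g$ via the product formula, the Gysin map along $M_a\to\L_\g\bs\Fl_\g$, Ng\^o's freeness of $\Gr^P_*\cK_{a}$ over $\homog{*}{A_a}$, and a palindromicity count.
\item[(ii)] Classes in $\cohog{2}{\cB}$ raise perverse degree by at most one, whereas $\eta$ raises it by two.
\end{enumerate}

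\textbf{How they give the theorem.}
\begin{enumerate}
\item By (i), $P_{j-1}H^{2j}_{\g,\pur}=0$.
\item By (ii), $\Sym^j(\cohog{2}{\cB})$ lands in $P_jH^{2j}_{\g,\pur}=\Gr^P_jH^{2j}_{\g,\pur}$.
\item By (i) again, together with strictness of $\Sym(\cohog{2}{\Fl})\to H_{\g,\pur}$, $\eta^{d_\g-j}$ maps this image isomorphically onto $\Gr^P_{2d_\g-j}{}'E_\g$.
\item Restriction to $\cB_e$, which factors through the pure part by Springer's purity, maps the same image onto ${}'H^{2j}_\cO$.
\end{enumerate}
This is exactly how degree-$2j$ classes are moved injectively into degree $2d_\g$, with different $j$ kept apart.

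\textbf{Your substitute step cannot do this.}
\begin{itemize}
\item A family degenerating $\g$ to its reduction type is not equisingular, since its limit is not even regular semisimple. So $E_{\g_s}$ need not be locally constant along it.
\item Injectivity of a specialization map on pure parts preserves cohomological degree. It can never place a degree-$2j$ class of $\cB_e$ in $H^{2d_\g}$.
\end{itemize}
Finally, independence of the filtration from the global data is not needed. Any globalization satisfying the approximation lemma yields a filtration with these axioms, and that suffices.
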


Here is a more precise statement. Polynomials in degree $2$ classes on the affine flag variety give a ring homomorphism by restricting to $\Fl_{\g}$
\begin{equation}\label{intro ph}
\Sym^{d_{\g}}(\cohog{2}{\Fl})\to \cohog{2d_{\g}}{\Fl}\xr{i_{\g}^{*}} \cohog{2d_{\g}}{\Fl_{\g}}^{A_{\g}}=E_{\g}.
\end{equation}
Let ${}'E_{\g}$ be the image of the above map. We equip ${}'E_{\g}$ with a decreasing filtration $Q^{\bu}({}'E_{\g})$ as follows. We have a natural decomposition 
\begin{equation*}
\cohog{2}{\Fl}=\cohog{2}{\cB}\op \cohog{2}{\Gr}
\end{equation*}
where $\cohog{2}{\Gr}$ is one-dimensional. Define $Q^{j}('E_{\g})$ to be the image of $(\cohog{2}{\cB})^{j}\cap \Sym^{d_{\g}}(\cohog{2}{\Fl})$ (where $(\cohog{2}{\cB})^{j}$ is the $j^{\textup{th}}$ power of the ideal $(\cohog{2}{\cB})\subset \Sym(\cohog{2}{\Fl})$). Then in Theorem \ref{th:main} we show that the associated graded $\Gr^{*}_{Q}{}'E_{\g}$ admits a canonical $W$-equivariant graded (up to a scaling by $2$) surjection
\begin{equation*}
\Gr^{*}_{Q}{}'E_{\g}\surj {}'H^{2*}_{\cO}.
\end{equation*}

We derive some consequences of the above theorem on the reduction types of $\g$. Recall that a nilpotent orbit $\cO$ of $G$ is called a reduction type of $\g$ if the adjoint orbit of $\g$ intersects $\cO+t\frg\tl{t}$. Each reduction type $\cO$ of $\g$ defines a locally closed sub-ind-scheme $\Gr_{\g,\cO}$ in the affine Springer fiber $\Gr_{\g}$: it consists of $gL^{+}G$ such that $\Ad(g^{-1})\g\in \cO+t\frg\tl{t}$. By considering the preimage of $\Gr_{\g,\cO}$ in $\Fl_{\g}$, we have a trivial inequality
\begin{equation}\label{dim GrO}
\dim\Gr_{\g,\cO}\le d_{\g}-d_{\cO}
\end{equation}
where $d_{\cO}$ is the dimension of the Springer fiber $\cB_{e}$ for any $e\in \cO$.

\begin{cor} Let $\g$ be a topologically nilpotent element of $\frg\lr{t}$.
\begin{enumerate}
\item The equality in \eqref{dim GrO} holds for every reduction type of $\g$. In other words, the preimage of $\Gr_{\g,\cO}$ in $\Fl_{\g}$ has top dimension.
\item The union of nilpotent orbits that are reduction types of $\g$ is open. 
\end{enumerate}
\end{cor}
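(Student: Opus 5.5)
Both parts will follow from Theorem \ref{th:intro main}, or more precisely from its refined form in Theorem \ref{th:main}, combined with a dimension count on the stratification of $\Gr_{\g}$ by reduction types.

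\emph{Part (1).} Fix a reduction type $\cO$ and $e\in\cO$. Let $\pi:\Fl_{\g}\to\Gr_{\g}$ be the projection, and put $Y_{\cO}=\pi^{-1}(\Gr_{\g,\cO})$. The fibers of $\pi$ over $\Gr_{\g,\cO}$ are Springer fibers $\cB_{e}$, so $Y_{\cO}$ is the total space of a fibration with fiber of dimension $d_{\cO}$.

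The plan is to argue by contradiction. Suppose $\dim\Gr_{\g,\cO}<d_{\g}-d_{\cO}$. The key is the top graded piece of ${}'H_{\cO}$, namely degree $2d_{\cO}$. This piece is nonzero, because the fundamental classes of components of $\cB_e$ (their $A_e$-average) lie in the image of $\cohog{*}{\cB}$. I expect the proof of Theorem \ref{th:main} to produce the surjection $\Gr^{*}_{Q}{}'E_{\g}\surj {}'H^{2*}_{\cO}$ by the following mechanism. One restricts the classes in $Q^{j}$ to the strata $Y_{\cO}$. A monomial with $j$ factors from $\cohog{2}{\cB}$ and $d_{\g}-j$ factors from $\cohog{2}{\Gr}$ is paired with the fiber classes in $\cohog{2j}{\cB_e}$. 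The remaining $d_{\g}-j$ powers of the ample class on $\Gr$ are then integrated over a $(d_{\g}-d_{\cO})$-dimensional family.

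Take $j=d_{\cO}$. The nonzero top piece ${}'H^{2d_{\cO}}_{\cO}$ then forces a nonzero pairing of $c_1(\cL_{\Gr})^{d_{\g}-d_{\cO}}$ against $\Gr_{\g,\cO}$, or against its closure. This requires $\dim\Gr_{\g,\cO}\ge d_{\g}-d_{\cO}$, which contradicts the assumption. Hence equality holds in \eqref{dim GrO}, and $Y_{\cO}$ has dimension $d_{\g}$.

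If instead Theorem \ref{th:main} is proved sheaf-theoretically, through a perverse filtration on $R\pi_*$ restricted to strata, I will run the same argument in that language. Nonvanishing of the top graded piece supported on the stratum $\cO$ means that the relevant perverse constituent contributes to $\cohog{2d_{\g}}{}$. This forces the stratum's contribution to have dimension $d_{\g}-d_{\cO}$ in $\Gr_{\g}$.

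\emph{Part (2).} Openness means: if $\cO$ is a reduction type and $\cO'\supset\cO$ in closure order (i.e.\ $\cO\subset\ov{\cO'}$), then $\cO'$ is also a reduction type. The idea is to deform within a lattice. Take $g$ with $X=\Ad(g^{-1})\g\in e+t\frg\tl{t}$. Then consider the nearby points of $\Gr_{\g}$, and of partial affine flag varieties, adjacent to $gL^+G$.

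Alternatively, I will use dimensions. Near $gL^{+}G$, the closure of the stratum $\Gr_{\g,\cO}$ is contained in the union of the strata $\Gr_{\g,\cO''}$ with $\cO''\subset\ov{\cO}$ (by semicontinuity of reduction type). Since $\Fl_{\g}$ is equidimensional and by part (1), every stratum $Y_{\cO}$ has full dimension $d_{\g}$, so each $\ov{Y_{\cO}}$ contains an irreducible component.

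The main obstacle is to pass from this to the existence of larger reduction types. Here is what I would try first. Consider the Iwahori-level map $\Fl_{\g}\to\Lie\bI/t\Lie\bI\to\frb$, recording the reduction $\bar X\in\frb$ of $X=\Ad(g^{-1})\g$. The fiber over the Springer-resolution-type point $(e,\mathfrak b)$ should be analyzed via the tangent space to $\Fl_{\g}$, using the equidimensionality from \cite[\S4]{KL}. I would show that moving along $LG_{\g}$-transversal directions in the $\Ad(\bI)$-orbit realizes generic elements of $\ov{\cO'}\cap(e+\frn)$ as reductions.

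If that fails, I would argue instead via the $W$-modules. For an orbit $\cO'$ with $\cO\subset\ov{\cO'}$, the module ${}'H_{\cO'}$ is a quotient of ${}'H_{\cO}$; more precisely, the image of $\cohog{*}{\cB}$ shrinks as the orbit grows. Theorem \ref{th:main} identifies $\Gr_{Q}{}'E_{\g}$ as having ${}'H_{\cO}$ as a quotient. I would then try to show that the set of $\cO$ for which this holds is precisely the set of reduction types. That would give the upward closure.
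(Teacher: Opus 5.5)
Part (1) of your proposal has a genuine gap. The step ``the nonzero top piece ${}'H^{2d_{\cO}}_{\cO}$ forces a nonzero pairing of $\y^{d_{\g}-d_{\cO}}$ against $\Gr_{\g,\cO}$'' rests on a guessed mechanism for Theorem \ref{th:main} that is not how the surjection arises.

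In the paper, hard Lefschetz for the perverse filtration ($\cup\y^{d_{\g}-j}$) identifies $\Gr^{P}_{2d_{\g}-j}{}'E_{\g}$ with the image of $\Sym^{j}(\cohog{2}{\cB})$ in $\Gr^{P}_{j}H^{2j}_{\g,\pur}$. The map to ${}'H^{2j}_{\cO}$ is then restriction to a single fiber $\pi_{\g}^{-1}(x)\cong\cB_{e}$. So all you learn is that a class $\theta\y^{d_{\g}-d_{\cO}}$ with $\theta|_{\cB_{e}}\neq 0$ is nonzero in $E_{\g}$. Nothing localizes this class to $\ov{\Fl_{\g,\cO}}$. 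The top-dimensional components detecting it could a priori all have generic points in other strata $\Fl_{\g,\cO''}$. And if $\dim\Fl_{\g,\cO}<d_{\g}$, there is no $2d_{\g}$-cycle there to pair against. Your argument never uses the $W$-action, and that is exactly what pins down the stratum. The sheaf-theoretic variant you sketch is equally unsupported.

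The missing idea is the relevance criterion of Corollary \ref{c:rel crit}. By Lemma \ref{l:decomp top}, $\homog{2d_{\g}}{\Fl_{\g}}$ decomposes $W$-equivariantly as a sum, over relevant orbits only, of $(\hBM{2d_{\g}-2d_{\cO}}{\wt\Gr_{\g,e}}\ot\homog{2d_{\cO}}{\cB_{e}})_{A_{e}}$, with $W$ acting on the Springer-fiber factor. By the Springer correspondence, $E_{\cO}$ can only occur in the $\cO$-summand. Hence $E_{\cO}$ occurs in $E_{\g}$ if and only if $\cO\in\RT_{\rel}(\g)$. Given this, (1) is immediate: the top-degree piece of ${}'H_{\cO}$ is $E_{\cO}$, so Theorem \ref{th:main} puts $E_{\cO}$ inside $E_{\g}$.

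For (2), your final $W$-module route is the right one, and the deformation, tangent-space and dimension routes can be dropped. However, the ``characterization'' you say you would try to prove is exactly this criterion, and you leave it unsupplied. With it, the argument closes as follows. If $\cO\subset\ov{\cO'}$, then $i^{*}_{e'}=s\circ i^{*}_{e}$ for the cospecialization $s:\cohog{*}{\cB_{e}}\to\cohog{*}{\cB_{e'}}$, so ${}'H_{\cO}\surj{}'H_{\cO'}$. The degree $2d_{\cO'}$ piece of ${}'H_{\cO'}$ is $E_{\cO'}$. Hence $E_{\cO'}$ occurs in $E_{\g}$, and $\cO'$ is a (relevant) reduction type.
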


We also give a strengthening of a theorem of Cheng-Chiang Tsai \cite{Tsai}:

\begin{prop}[See Proposition \ref{p:reg rep}]
Suppose $\g\in\frg\lr{t}$ is regular semisimple and $t^{-1}\g$ is integral. Then as a $\Wa$-module, $E_{\g}$ is {\em dual} to the space of $W$-harmonic polynomials on $V=\cohog{2}{\cB}$, where the lattice and finite Weyl group part of $\Wa$ act on $V$ by translation and the reflection representation respectively. In particular, as a $W$-module, $E_{\g}$ is isomorphic to the regular representation.
\end{prop}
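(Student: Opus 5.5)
The plan is to squeeze $E_{\g}$ between two bounds: Theorem \ref{th:main} with $\cO=\{0\}$ gives $\dim E_{\g}\ge |W|$, and a rigidity argument for the lattice action gives an injection of $E_{\g}^{*}$ into the harmonic polynomials. Write $\g=t\g'$ with $\g'$ integral.

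\emph{Lower bound.} After conjugating $\g'$ into $\frg\tl{t}$ we get $\g\in t\frg\tl{t}$, so the zero orbit $\cO=\{0\}$ is a reduction type of $\g$. For $e=0$ we have $\cB_{e}=\cB$ and $A_{e}=1$, so ${}'H_{\{0\}}=\cohog{*}{\cB}$, which is the regular representation of $W$ (the coinvariant algebra). By Theorem \ref{th:main} there is a $W$-equivariant surjection $\Gr^{*}_{Q}{}'E_{\g}\surj \cohog{2*}{\cB}$. Hence $\dim E_{\g}\ge\dim {}'E_{\g}\ge |W|$. Moreover, the composite
\begin{equation*}
\Sym^{d_{\g}}(\cohog{2}{\Fl})\to {}'E_{\g}\to \Gr_{Q}{}'E_{\g}\to \cohog{*}{\cB}
\end{equation*}
is, up to the grading, restriction of polynomials in $\cohog{2}{\cB}$ to the coinvariant algebra. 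It is induced by evaluating the Chern-class classes on the fibres $\cB$ of $\pi^{-1}(\Gr_{\g'})\subset \Fl_{\g}$. This closed subscheme lies in $\Fl_{\g}$ because $\Ad(g^{-1})\g\in t\frg\tl{t}\subset \Lie\bI$ whenever $gL^{+}G\in\Gr_{\g'}$. It has dimension $d_{\g'}+\dim\cB=d_{\g}$, by the discriminant-valuation formula for $d_{\g}$ together with $\val\,\mathrm{disc}(t\g')=\val\,\mathrm{disc}(\g')+|\Phi|$.

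\emph{Upper bound and the $\Wa$-structure.} I would use the dual pairing between ${}'E_{\g}$ and $\Sym(\cohog{2}{\Fl})$. An element $\xi\in E_{\g}^{*}$ defines a function $f_{\xi}$ on polynomials of degree $d_{\g}$ in $\cohog{2}{\Fl}$. Restricting to the affine slice where the $\cohog{2}{\Gr}$-coordinate equals $1$, this becomes a polynomial function on $V=\cohog{2}{\cB}$. Next I would check, using Lusztig's construction of the $\Wa$-action, that:
\begin{itemize}
\item the lattice $\xcoch(T)$ acts on $E_{\g}$ via translation of line bundles, so on $f_{\xi}$ it acts by translation on $V$;
\item $W$ acts through the reflection representation.
\end{itemize}
The remaining task is to show that every $f_{\xi}$ is killed by the positive-degree $W$-invariant constant-coefficient differential operators, i.e.\ is $W$-harmonic. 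The idea is that an invariant polynomial $P\in\Sym(\cohog{2}{\cB})^{W}_{+}$ comes from $\cohog{*}{\Gr}$-type classes (pullbacks along $\Fl\to\Gr$ combined with $\pi^{*}$ of $L^{+}G$-equivariant classes). Because $\g$ lies in $t\frg\tl{t}$-type position uniformly, such classes act on $\Fl_{\g}$ through the base $\Gr_{\g}$, which has dimension less than $d_{\g}$ in the relevant fibre direction. Therefore multiplication by $P$ followed by a degree-$d_{\g}$ monomial is zero. This gives $E_{\g}^{*}\hookrightarrow \cH_{W}$, the $W$-harmonics, which has dimension $|W|$, provided $E_{\g}={}'E_{\g}$.

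\emph{Main obstacle.} The hard step is the equality $E_{\g}={}'E_{\g}$, i.e.\ that the Chern-class map \eqref{intro ph} is surjective onto $E_{\g}$, together with the vanishing for invariant polynomials. Other reduction types $\cO\neq 0$ also contribute top-dimensional components, by the Corollary, so a naive count of components in $\pi^{-1}(\Gr_{\g'})$ does not suffice. To handle this, I would first try to prove that $\dim E_{\g}\le |W|$ directly: compare with the homogeneous case $\g'$ regular semisimple in $\frg\tl{t}$, where $\Fl_{\g}$ is a $\cB$-bundle-like object over $\Gr_{\g'}$. Then degenerate $\g$ to this case in a family, using the purity and specialization arguments that underlie the perverse filtration. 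Once the dimension is at most $|W|$, the lower bound forces all inequalities to be equalities. This yields $E_{\g}\cong\cH_{W}^{*}$ as $\Wa$-modules, and in particular $E_{\g}$ is the regular representation of $W$.
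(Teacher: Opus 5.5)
Your proposal has a genuine gap, and it is exactly the step you flag as the main obstacle: the upper bound $\dim E_{\g}\le |W|$, equivalently ${}'E_{\g}=E_{\g}$.

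\textbf{How the paper gets the upper bound.} The paper does not derive this from the perverse-filtration machinery. It imports it from Tsai's theorem (Theorem \ref{th:Tsai}), which is proved by point counting, i.e.\ via orbital integrals and the Shalika germ expansion. Once $\dim E_{\g}=|W|$ is known, the surjection of Theorem \ref{th:main} onto $\cohog{*}{\cB}$ forces ${}'E_{\g}=E_{\g}$ and forces $\th_{\g,\{0\}}$ to be an isomorphism. The kernel of $\ph^{d_{\g}}_{\g}$ is then exactly $\bigoplus_{j}I^{j}\ot\y^{d_{\g}-j}$, which gives $E_{\g}\cong\Sym(V)/I$ and the duality with harmonics.

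\textbf{Why your degeneration sketch does not replace Tsai.} In a family degenerating $\g=t\g'$ to $ts$ with $s$ regular semisimple, $d_{\g}=d_{\g'}+\dim\cB$ jumps; at the homogeneous end it equals $\dim\cB$. So the top cohomologies you want to compare live in different degrees. Lemma \ref{l:approx} gives constancy only under small perturbations. There is also no semicontinuity for the number of $A_{\g}$-orbits of top-dimensional components in the direction you need.

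The perverse-filtration axioms do not close the gap either. Hard Lefschetz together with Lemma \ref{l:fil lower bound} identifies $\Gr^{P}_{2d_{\g}-j}E_{\g}$ with $P_{j}H^{2j}_{\g,\pur}$ via $\y^{d_{\g}-j}$. But nothing forces $P_{j}H^{2j}_{\g,\pur}$ to be spanned by polynomials in $\cohog{2}{\cB}$, and that is precisely what ${}'E_{\g}=E_{\g}$ means. Some external count of components is unavoidable here, just as the paper uses Kivinen--Tsai in type $A$.

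\textbf{The harmonicity justification.} Your argument for this step is also incorrect as stated. $\Gr_{\g}$ has dimension $d_{\g}$ as well, so a dimension count on the base gives nothing. The correct reason is easy: positive-degree $W$-invariants in $\Sym(\cohog{2}{\cB})$ already vanish in $\cohog{*}{\Fl}$, because $\cohog{*}{\cB}$ sits inside $\cohog{*}{\Fl}\cong\cohog{*}{\cB}\ot\cohog{*}{\Gr}$ as a subring. Hence $\ph^{d_{\g}}_{\g}$ factors through $\bigoplus_{j\le\dim\cB}\cohog{2j}{\cB}\ot\y^{d_{\g}-j}$, which has dimension $|W|$ since $d_{\g}\ge\dim\cB$.

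\textbf{What your argument does prove.} Combined with your lower bound from Theorem \ref{th:main}, the above shows unconditionally that $\dim{}'E_{\g}=|W|$. It also shows that ${}'E_{\g}$ is $\Wa$-dual to the $W$-harmonic polynomials. This is a statement about ${}'E_{\g}$, not $E_{\g}$; passing from ${}'E_{\g}$ to $E_{\g}$ is exactly where Tsai's theorem is needed.
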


Finally, when $G$ is of type $A$, a result of Kivinen and Tsai \cite{KT} gives an optimal version of Theorem \ref{th:intro main}, namely,

\begin{theorem}[see Theorem \ref{th:A}]\label{th:intro A} Let $G=\SL_{n}$ and $\g\in\frg\lr{t}$ be a regular semisimple and topologically nilpotent element. Let $\cO$ be the minimal reduction type of $\g$. Then $E_{\g}={}'E_{\g}$ and there is a canonical isomorphism of graded $W$-modules
\begin{equation*}
\Gr^{*}_{Q}E_{\g}\cong \cohog{2*}{\cB_{e}}, \quad e\in \cO.
\end{equation*}
\end{theorem}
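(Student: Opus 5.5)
The argument has three ingredients: the general surjection of Theorem \ref{th:main}, a matching dimension count coming from Kivinen--Tsai \cite{KT}, and the injectivity of $\cohog{*}{\cB}\to\cohog{*}{\cB_{e}}$ in type $A$.

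\emph{Step 1: the target.} For $G=\SL_{n}$ every component group $A_{e}$ is trivial. Moreover the restriction $\cohog{*}{\cB}\to\cohog{*}{\cB_{e}}$ is surjective for every nilpotent $e$; this is classical for type $A$ Springer fibers, e.g.\ via the De Concini--Procesi/Tanisaki presentation of $\cohog{*}{\cB_{e}}$ as a quotient of the coinvariant algebra. Hence ${}'H_{\cO}=\cohog{*}{\cB_{e}}$. Theorem \ref{th:main}, applied to the minimal reduction type $\cO$, then gives a canonical $W$-equivariant graded surjection
\begin{equation*}
\Gr^{*}_{Q}{}'E_{\g}\surj \cohog{2*}{\cB_{e}}.
\end{equation*}
This already holds for any reduction type. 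The remaining task is to prove the dimension bound $\dim E_{\g}\le \dim \cohog{*}{\cB_{e}}$ for the \emph{minimal} reduction type $\cO$.

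\emph{Step 2: the upper bound.} Here I would invoke Kivinen--Tsai \cite{KT}, which I read as computing $E_{\g}$ (the $A_{\g}$-invariant top cohomology) for $\SL_{n}$. Their result should say that, as a $W=S_{n}$-module, $E_{\g}$ is isomorphic to $\cohog{*}{\cB_{e}}$, i.e.\ to an induced representation $\Ind_{S_{\l}}^{S_{n}}\mathbf{1}$ with $\l$ the Jordan type of the minimal reduction type. Equivalently, the $A_{\g}$-orbits of top-dimensional irreducible components of $\Fl_{\g}$ are counted by that number. Granting this, we get
\begin{equation*}
\dim E_{\g}=\dim \cohog{*}{\cB_{e}}.
\end{equation*}
Combining with Step 1 gives the chain
\begin{equation*}
\dim \cohog{*}{\cB_{e}}\le\dim {}'E_{\g}\le \dim E_{\g}=\dim \cohog{*}{\cB_{e}}.
\end{equation*}
So all inequalities are equalities. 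This forces $E_{\g}={}'E_{\g}$, and the surjection $\Gr_{Q}^{*}E_{\g}\surj\cohog{2*}{\cB_{e}}$ becomes an isomorphism of graded $W$-modules. Canonicity is inherited from the canonical map in Theorem \ref{th:main}.

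\emph{Main obstacle.} The delicate point is matching the Kivinen--Tsai statement exactly with $\dim \cohog{*}{\cB_{e}}$ for the \emph{minimal} reduction type. Two things need care:
\begin{itemize}
\item One must check that the reduction type appearing in \cite{KT} (for instance a Newton/Jordan type attached to the spectral curve) coincides with the minimal reduction type of $\g$. This requires that the set of reduction types has a unique minimal element in type $A$, which I would extract from \cite{KT} or from the openness result in the Corollary above.
\item One must check that their count is of $A_{\g}$-coinvariants or invariants on top cohomology, not of components of $\Gr_{\g}$. If \cite{KT} only gives the count for $\Gr_{\g}$, I would pass to $\Fl_{\g}$ by analysing the fibration $\Fl_{\g}\to\Gr_{\g}$ over the open stratum $\Gr_{\g,\cO}$. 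By the Corollary, this stratum has dimension $d_{\g}-d_{\cO}$, and its fibers are Springer fibers $\cB_{e}$ whose top cohomology contributes $\cohog{2d_{\cO}}{\cB_{e}}$.
\end{itemize}
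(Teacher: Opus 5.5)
Your Step 1 and the final dimension squeeze are exactly the skeleton of the paper's proof: the surjection of Theorem \ref{th:main} reduces everything to the equality $\dim E_{\g}=\dim\cohog{*}{\cB_{e}}$ for $e$ in the minimal reduction type. You are right to state explicitly the type $A$ surjectivity of $\cohog{*}{\cB}\to\cohog{*}{\cB_{e}}$. The paper uses it tacitly when it replaces ${}'H_{\cO}$ by $H_{\cO}$. One small correction: $A_{e}$ is not trivial for $\SL_{n}$ in general; it is $\mu_{n}$ for $e$ regular. It does act trivially on $\cohog{*}{\cB_{e}}$, which is all you need.

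The gap is Step 2, which is the whole content of the theorem beyond Theorem \ref{th:main}. You grant, as a reading of \cite{KT}, that $E_{\g}$ matches $\cohog{*}{\cB_{e}}$ for the minimal reduction type of an arbitrary $\g$. That is essentially the conclusion itself, and you give no argument for it. The paper uses \cite[Remark 8.6, Theorem 8.8]{KT} only in this form: for \emph{elliptic} $\g$ whose roots have valuation $v$, $\dim E_{\g}=|W/W_{\l_{v}}|$. This number is attached to $v$, not to reduction types. Connecting it to $\dim\cohog{*}{\cB_{e}}$ needs three ingredients missing from your proposal:
\begin{enumerate}
\item Lemma \ref{l:min red A} identifies the Jordan type of the minimal reduction type of an elliptic $\g$ as $\l_{v}$. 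The skeleton of $\Gr$ for the Coxeter-type torus $G_{\g}$ is the single lattice $\cO_{E}$. So by \cite{Y-RT} the minimal reduction type is the orbit of $\g \bmod t$ acting on $\cO_{E}/(t)$, whose Jordan type in the basis $t^{i/n}$ is $\l_{v}$.
\item The Alvis--Lusztig induction formula \eqref{ind total Spr} gives $\dim\cohog{*}{\cB_{e}}=|W/W_{\l}|$ for $e$ of Jordan type $\l$, since $e$ is regular in the Levi $M_{\l}$.
\item Non-elliptic $\g$ must be reduced to the elliptic case. This uses Lemma \ref{l:ASF in Levi}, i.e.\ $\dim E_{G,\g}=|W/W_{M}|\dim E_{M,\g}$, proved by matching $A_{\g}$-orbits of irreducible components along semi-infinite orbits. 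It also uses \cite[Corollary 3.4]{Y-RT}, that the minimal reduction type in $G$ is induced from the one in $M$, together with \eqref{ind total Spr} once more.
\end{enumerate}

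The fallbacks you sketch would not close this gap.
\begin{enumerate}
\item Uniqueness of the minimal reduction type does not follow from the openness corollary. An upward-closed set of orbits can have several minimal elements, because the dominance order on partitions is not total; for example $(3,1,1,1)$ and $(2,2,2)$ are incomparable. For $\SL_{n}$, uniqueness is \cite[Theorem 1.18]{Y-RT}, and the statement presupposes it anyway.
\item A count of components of $\Gr_{\g}$ cannot be turned into $\dim E_{\g}$ by studying $\Fl_{\g}\to\Gr_{\g}$ over $\Gr_{\g,\cO}$ alone. For minimal $\cO$ this stratum is the closed one, not the open one. By Lemma \ref{l:decomp top} and Corollary \ref{c:all rel}, every $\cO'\in\RT(\g)$ contributes top-dimensional components to $\Fl_{\g}$. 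So the stratum over $\cO$ sees only the $E_{\cO}$-isotypic part of $E_{\g}$ (Corollary \ref{c:rel crit}), not all of $E_{\g}$.
\end{enumerate}
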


\subsection{Perverse filtration}

The proof of Theorem \ref{th:intro main} crucially relies on construction of a {\em perverse filtration} on (part of) the cohomology of $\Fl_{\g}$. Such a perverse filtration has been constructed in the special case where $\g$ is homogeneous in \cite{OY}. It is a local analog of the perverse filtration on the cohomology of Hitchin fibers coming from the Hitchin fibration. Section \ref{s:perv fil} is devoted to the construction of such a perverse filtration with desired properties using auxiliary global data. 

Let 
\begin{equation*}
H_{\g,\pur}=\bigoplus_{i\in \ZZ}\Gr_{W}^{i}\cohog{i}{\Fl_{\g}}^{A_{\g}}
\end{equation*}
be the pure part of the cohomology groups $\cohog{i}{\Fl_{\g}}^{A_{\g}}$, in the sense of either mixed Hodge theory (when $k=\CC$) or Frobenius weights (when $k=\ov\FF_{q}$). Note that the degree $i$ piece $H^{i}_{\g,\pur}$ is naturally a quotient of $\cohog{i}{\Fl_{\g}}^{A_{\g}}$. 

\begin{theorem}\label{th:intro perv fil} For each integral element $\g\in \frg\lr{t}$, there exists an increasing filtration $P_{\bu}H_{\g,\pur}$ on $H_{\g,\pur}$ satisfying the axioms of a perverse filtration in Definition \ref{d:perv fil}.
\end{theorem}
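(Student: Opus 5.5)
The plan is to globalize $\g$ and import the perverse filtration coming from a Hitchin-type fibration, following the strategy of \cite{OY} in the homogeneous case.

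First, choose a smooth projective curve $X$, a point $x\in X$ with $\cO_{X,x}^{\wedge}\cong\cO_F$, and an auxiliary line bundle $D$ of large degree. Consider the parabolic ($\bI$-level at $x$) Hitchin moduli stack $\cM\to\cA$ of $D$-twisted Higgs bundles. By approximating the characteristic polynomial of $\g$ to high $t$-adic order, pick a point $a\in\cA$ such that:
\begin{itemize}
\item[(a)] the germ of $a$ at $x$ is conjugate to $\g$;
\item[(b)] $a$ is regular semisimple away from $x$ and generic elsewhere (e.g. transversal to the discriminant).
\end{itemize}
The product formula of Ngô then gives a homeomorphism
\begin{equation*}
\cM_a\sim P_a\times^{\prod_y P_y}\prod_y \Fl_{\g_y}.
\end{equation*}
Since the other local factors are points or are controlled, taking $\pi_0(P_a)$-invariants (equivalently, the stable part) identifies $\cohog{*}{\cM_a}^{\textup{st}}$ with $\cohog{*}{\Fl_{\g}}^{A_{\g}}\otimes\cohog{*}{\textup{Pic-type abelian part}}$, up to finite-dimensional truncations. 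To handle the ind-scheme, I would work with the finite type quotients $\Lambda\backslash\Fl_\g$ with $\Lambda\subset LG_\g$ a lattice, as in \cite{OY}.

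Second, over the elliptic locus near $a$ the map $f\colon\cM\to\cA$ is proper with smooth total space (for $\deg D$ large), so the decomposition theorem applies. We define $P_{\bu}$ on $\cohog{*}{\cM_a}$ as the image of the perverse truncations ${}^p\tau_{\le j}f_*\Qlbar$ restricted to the fiber. Restriction to a fiber is only well behaved on the pure part, and this is exactly why the statement concerns $H_{\g,\pur}$. Indeed, by the decomposition theorem $\cohog{*}{\cM_a}$ is pure when $\cM$ is smooth and $f$ is proper, and the local-to-global comparison is surjective on the relevant weight graded pieces. The filtration then descends to $H_{\g,\pur}$ by transporting it through the product formula and stripping off the abelian variety factor, using that $P_a$ acts compatibly: the cap product with $\homog{1}{P_a}$ shifts the perverse degree in the standard way.

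Third, verify the axioms in Definition \ref{d:perv fil}, presumably:
\begin{itemize}
\item[(i)] compatibility with the $\Wa$- (or $W$-) action, which holds because the Springer action is defined on $f_*$ sheaf-theoretically;
\item[(ii)] relative hard Lefschetz, i.e. cup product with the ample class shifts $P$ by $2$;
\item[(iii)] the Chern classes of $\cohog{2}{\cB}$ lie in $P_{\le 2}$ while the $\Gr$ class behaves like a relatively ample class.
\end{itemize}
Property (ii) comes from relative hard Lefschetz for $f$. Property (iii) comes from realizing $\cohog{2}{\cB}$ classes as tautological line bundles pulled back from the parabolic structure, which are relatively trivial in the appropriate sense.

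The main obstacle is independence of the auxiliary choices $(X,D,a)$ and the passage from the global fiber to the local one. One must show that the induced filtration on $H_{\g,\pur}$ does not depend on the global data; failing canonicity, it suffices to fix one choice and prove that the axioms hold. One must also show that the stable-part decomposition is compatible with the perverse truncation. For this I would first try the support theorem (Ngô's $\delta$-regularity argument, valid for $\deg D$ large and $p>2h$). It says all stable supports are the whole base, so the perverse filtration is governed by a generic-fiber local system. This makes the purity and surjectivity comparisons work, with $a$ chosen so that only the point $x$ contributes nontrivial $\delta$.
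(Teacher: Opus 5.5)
Your outline (approximate $\g$, globalize to a parabolic Hitchin fiber good away from $x$, use the product formula, push the perverse filtration of $\cK$ down to $H_{\g,\pur}$, invoke the support theorem) matches the paper's strategy. However, the step that actually produces the local filtration is wrong as stated.

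First, $\cohog{*}{\cM_a}$ is not pure. For proper $f$ with smooth source, $\bR f_*\Qlbar$ is pure as a complex, but its stalks are only mixed of weights $\le$ degree; a nodal fiber of an elliptic surface already has a weight-$0$ class in $\upH^1$. Nor is the stable part $\cohog{*}{\Fl_\g}^{A_\g}\otimes\cohog{*}{A_a}$. The torsor $\pi:M_a=\cP'_a\times^{\L_\g}\Fl_\g\to\L_\g\bs\Fl_\g$ only gives a Leray spectral sequence, and $\cohog{*}{\L_\g\bs\Fl_\g}$ differs from $\cohog{*}{\Fl_\g}^{\L_\g}$ by the group cohomology of $\L_\g$.

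The paper's fix is to work with pure quotients throughout:
\begin{itemize}
\item The terms $\cohog{i}{\L_\g,\cohog{j}{\Fl_\g}}$ with $i>0$ have no weight $i+j$ part, so $\cohog{*}{\L_\g\bs\Fl_\g}_{\pur}\cong H_{\g,\pur}$.
\item The Leray spectral sequence degenerates on pure parts. Hence $\cK_{a,\pur}$ is a free $\homog{*}{A_a}$-module, and the Gysin map $\t$ induces $\cK_{a,\pur}\otimes_{\homog{*}{A_a}}\Qlbar\cong H_{\g,\pur}[-2\ov d](-\ov d)$.
\item One then \emph{defines} $P_iH_{\g,\pur}$ as the image of $P_{i+2\ov d}\cK_{a,\pur}$ under $\t$. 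Ng\^o's freeness of $\Gr^P_*\cK_a$ over $\homog{*}{A_a}$ gives $\Gr^P_*\cK_{a,\pur}\otimes_{\homog{*}{A_a}}\Qlbar\cong\Gr^P_{*-2\ov d}H_{\g,\pur}[-2\ov d](-\ov d)$.
\end{itemize}
``Stripping off the abelian variety factor'' does not supply any of this, and without it you have no control over the local graded pieces. Separately, a global $a$ cannot have germ conjugate to $\g$. What you need is that $a\equiv a_0\bmod t^N$ gives $\Fl_{\g(a)}\cong\Fl_{\g_0}$ compatibly with $A_\g$, $\Wa$ and $\cohog{*}{\Fl}$ (Lemmas \ref{l:approx} and \ref{l:approx equiv}).

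Second, the axioms you verify are not those of Definition \ref{d:perv fil}, and the ones you do verify are not transferred from global to local.
\begin{itemize}
\item \textbf{Hard Lefschetz.} Global relative hard Lefschetz is centered at $d=\ov d+d_\g$. You need HL for $\y\in\cohog{2}{\Gr}$ centered at $d_\g$, which corresponds to global index $2\ov d+d_\g$. The paper gets $\dim\Gr^P_{d_\g-i}H_{\g,\pur}=\dim\Gr^P_{d_\g+i}H_{\g,\pur}$ from the factorization $\Psi(t)=\Phi(t)A(t)$ of perverse Poincar\'e polynomials. It gets surjectivity of $\cup\y^i$ from a square showing that $\t$ intertwines $\y_\cM=c_1(\LL_\cM)$ with $\y$ (Lemma \ref{l:comp det} plus a K\"unneth argument).
\item \textbf{Ampleness.} $\y_\cM$ is pulled back along $\nu:\cM\to\cM^{\Hit}$, which is finite over $\cA^{\el,x}$ but in general has positive-dimensional fibers over your $a$. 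So relative HL at $a$ must be propagated from $\cA^{\el,x}$ using the full support theorem.
\item \textbf{$\Wa$-stability.} Stability of the global filtration is not enough; you also need $\t$ to be $\Wa$-equivariant.
\item \textbf{Property \eqref{fil bounds}.} You omit it entirely. $P_iV^i=V^i$ comes from full support: stalks of a perverse sheaf with full support lie in degrees $\ge-\dim\cA$. $P_{-1}V=0$ comes from the palindromicity above.
\item \textbf{Property \eqref{Chern half}.} It asks that cupping with $\cohog{2}{\cB}$ shift $P$ by $1$, not that these classes lie in $P_{\le 2}$, which is automatic. This is a genuine input, \cite[Lemma 3.2.3]{YLD}. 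Calling the line bundles ``relatively trivial'' is not an argument for it.
\end{itemize}
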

The most powerful property of a perverse filtration is the hard Lefschetz property with respect to a nonzero class in $\cohog{2}{\Gr}$. The hard Lefschetz property is responsible for showing that $E_{\g}$ is ``large enough''.

The decreasing filtration $Q^{\bu}$ on $'E_{\g}$ introduced after Theorem \ref{th:intro main} is related to the perverse filtration on $E_{\g}$ (as the top degree piece of $H_{\g,\pur}$) in the following way
\begin{equation}\label{fil Q and P}
Q^{j}({}'E_{\g})={}'E_{\g}\cap P_{2d_{\g}-j}E_{\g}.
\end{equation}
 
It is of theoretical importance to see if the perverse filtration we define is actually independent of the global data, but we do not address this question here.


\subsection*{Acknowledgments}
The author would like to thank Oscar Kivinen, George Lusztig and Cheng-Chiang Tsai for helpful discussions.  He also thanks Roman Bezrukavnikov and Minh-Tam Trinh for comments. It is a pleasure to dedicate this short paper to the 80th birthday of George Lusztig.

\section{Cohomology and perverse filtration}
In this paper all \'etale cohomology groups will have $\Qlbar$-coefficients.

\subsection{Cohomology of affine Springer fibers}\label{ss:coho} For an indscheme $X$ that is a union of finite-type closed subschemes over $k$ (such as $\Fl_{\g}$ or $\Fl$, or their locally closed sub-indschemes), the cohomology of $X$ is defined to be the limit
\begin{equation}\label{coho as lim}
\cohog{n}{X}=\varprojlim_{Z}\cohog{n}{Z}
\end{equation}
where the limit is taken over all finite-type subschemes of $X$ (or restricting to closed subschemes of finite type, since they are cofinal in all finite-type subschemes). 

We also define the Borel-Moore homology of $X$ as
\begin{equation*}
\hBM{n}{X}=\varinjlim_{Z}\hBM{n}{Z}
\end{equation*}
where $Z$ runs over all {\em closed} subschemes of $X$ of finite type over $k$. The usual homology $\homog{*}{X}$ of $X$ is defined similarly (where we can take the colimit over all finite type subschemes), and it coincides with the Borel-Moore homology if $X$ is ind-proper. Then canonically $\cohog{n}{X}\cong \Hom_{\Qlbar}(\homog{n}{X},\Qlbar)$. 

Applying to $X=\Fl$, it is well-known that $\cohog{*}{\Fl}$ is finite dimensional in each degree.


Let $\g\in \frg\lr{t}$ be regular semisimple and integral. The group $A_{\g}=\pi_{0}(LG_{\g})$ acts on $\cohog{*}{\Fl_{\g}}$. 
Denote the $A_{\g}$-invariants on cohomology by
\begin{equation*}
H_{\g}:=\cohog{*}{\Fl_\g}^{A_\g}
\end{equation*}
with graded pieces $H^{i}_{\g}$. By \cite[\S2, Proposition 1]{KL}, there is a lattice $\L_{\g}\subset LG_{\g}$ acting freely on $\Fl_{\g}$ with a finite-type fundamental domain. Therefore $H_{\g}$ is finite-dimensional.

\quash{Denote the $A_{g}$-coinvariants by
\begin{equation*}
H_\g:=\homog{*}{\Fl_\g}_{A_\g}
\end{equation*}
with the homological grading $H_\g=\op_{j\in \ZZ}H_{\g,j}$. We have $E_{\g}=H_{\g, 2d_\g}$. Then $H_{\g}$ and $H^{\g}$ are dual graded vector spaces of finite total dimension.}

The inclusion $i_\g: \Fl_\g\incl \Fl$ restriction map on cohomology
\begin{equation*}
i_\g^*: \cohog{*}{\Fl}\to H_{\g}.
\end{equation*}


\subsection{Perverse filtration}
Since $G$ is simply-connected and almost simple, $\dim \cohog{2}{\Gr}=1$. Let $0\ne \y\in \cohog{2}{\Gr}$. When confusion is unlikely, we also denote the pullback of $\y$ to $\cohog{2}{\Fl}$ as well as its restriction to $\Fl_\g$ by $\y$.

We have a canonical ring isomorphism
\begin{equation*}
\cohog{*}{\Fl}\cong\cohog{*}{\cB}\ot \cohog{*}{\Gr}
\end{equation*}
Here the copy of $\cohog{*}{\cB}$ in $\cohog{*}{\Fl}$ is generated by the Chern classes of line bundles $\cO(\l)$ (for $\l\in \xch(T)$) that extends from $\cB$ to $\Fl$. In particular, $\cohog{*}{\cB}$ can be viewed as a subring of $\cohog{*}{\Fl}$ in a natural way.

Let $\g\in\frg\lr{t}$ be regular semisimple and integral. 
A graded subquotient $V=V'/V''$ of $\cohog{*}{\Fl_{\g}}$ is called {\em admissible} if its degree zero piece $V^{0}=\cohog{0}{\Fl_{\g}}=\Qlbar$ (note that $\Fl_{\g}$ is connected by \cite[\S4, Lemma 2]{KL}), and the graded subspaces $V''\subset V'$  
are stable under the cup product action of $\cohog{2}{\Fl}$ (under $i_{\g}^{*}$) and the $\Wa$-action. In particular, $V$ carries an action of $\Sym(\cohog{2}{\Fl})\rtimes\Wa$.

\begin{defn}\label{d:perv fil} Let $V=\op_{j\in \ZZ}V^{j}$ be an admissible graded subquotient of $\cohog{*}{\Fl_{\g}}$. An increasing filtration $P_\bu V=(P_{i}V)_{i\in \ZZ}$ on $V$
is called a {\em perverse filtration} if it satisfies the following conditions
\begin{enumerate}
\item\label{comp grading} The filtration $P_\bu V$ is compatible with the cohomological grading on $V$, i.e., 
\begin{equation*}
P_i V=\bigoplus_{j\in \ZZ}V^{j}\cap P_i V.
\end{equation*}
We denote $V^{j}\cap P_i V$ by $P_i V^{j}$.
\item\label{fil bounds} $P_{-1}V=0$ and $P_{i}V^{i}=V^{i}$ for all $i$. In particular, $P_{2d_{\g}}V=V$. 
\item\label{PW} Each $P_iV$ is stable under the action of $\Wa$.
\item\label{HL} Cupping with $\y\in \cohog{2}{\Gr}$ takes $P_i V$ to $P_{i+2} V$, and induces an isomorphism for every $0\le i\le d_\g$
\begin{equation*}
\cup\y^{i}: \Gr^P_{d_\g- i} V\isom \Gr^P_{d_\g+i}V.
\end{equation*}
\item\label{Chern half} The action of $\cohog{2}{\cB}$ on $V$ sends $P_{i}V$ to $P_{i+1}V$, for all $i\in \ZZ$.
\end{enumerate}
\end{defn}

Since $\y\in \cohog{2}{\Gr}-\{0\}$ is unique up to a nonzero scalar, the notion of a perverse filtration on $V$ is independent of the choice of $\y$.

\begin{remark} For applications in this paper, we only need a weaker version of property \eqref{PW} above, namely we only need each $P_iV$ to be stable under the action of the finite Weyl group $W$, as a subgroup of $\Wa$.
\end{remark}

We record a formal consequence of the axioms of a perverse filtration.
\begin{lemma}\label{l:fil lower bound}
Let $P_{\bu}V$ be a perverse filtration on an admissible graded subquotient $V$ of $\cohog{*}{\Fl_{\g}}$. Then for any $i,j\in\ZZ$
\begin{equation*}
\Gr^{P}_{i}V^{j}\ne 0 \Rightarrow \frac{j}{2}\le i\le j.
\end{equation*}
\end{lemma}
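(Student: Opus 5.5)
The upper bound $i\le j$ comes straight from axiom \eqref{fil bounds}, and the lower bound $j/2\le i$ comes from combining the hard Lefschetz axiom \eqref{HL} with the fact that cupping with $\y$ raises cohomological degree by $2$.

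\emph{Upper bound.} By axiom \eqref{fil bounds}, $P_jV^j=V^j$. Hence $\Gr^P_iV^j=0$ for all $i>j$.

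\emph{Lower bound, setup.} Suppose $\Gr^P_iV^j\ne 0$. First, $i\ge 0$, because $P_{-1}V=0$. Cup product with $\y$ maps $V^j$ to $V^{j+2}$ and $P_i$ to $P_{i+2}$, so by axiom \eqref{comp grading} the isomorphisms in \eqref{HL} respect the cohomological grading. Thus for $0\le a\le d_\g$ we have
\begin{equation*}
\cup\y^a:\Gr^P_{d_\g-a}V^j\isom \Gr^P_{d_\g+a}V^{j+2a}.
\end{equation*}

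\emph{Case $i\le d_\g$.} Put $a=d_\g-i$, so $0\le a\le d_\g$. The displayed isomorphism gives $\Gr^P_{2d_\g-i}V^{j+2(d_\g-i)}\ne 0$. Applying the upper bound just proved to this nonzero piece yields $2d_\g-i\le j+2d_\g-2i$, that is, $i\le j$. This does not give the lower bound, so I run the argument in the other direction.

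\emph{Case $i\ge d_\g$.} Now $i\le 2d_\g$ because $P_{2d_\g}V=V$. Put $a=i-d_\g$, so $0\le a\le d_\g$. Since the isomorphism is onto, $\Gr^P_iV^j\ne0$ forces $\Gr^P_{2d_\g-i}V^{j-2a}\ne0$. We know $2d_\g-i\ge 0$, and the upper bound applied to this piece gives $2d_\g-i\le j-2i+2d_\g$, again only $i\le j$. So hard Lefschetz in either direction only reproduces the upper bound, and the lower bound must come from vanishing of negative degrees.

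\emph{Lower bound via negative degrees.} The key input is that $V^m=0$ for $m<0$, since $V$ is a subquotient of cohomology, which vanishes in negative degree. Assume $i\ge d_\g$ and set $a=i-d_\g$. Then $\Gr^P_{2d_\g-i}V^{j-2(i-d_\g)}\ne0$, so $j-2i+2d_\g\ge0$, i.e. $i\le d_\g+j/2$. This is not yet $i\ge j/2$, so I also look for an upper bound on degrees.

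\emph{Top degree.} I would use that $V^m=0$ for $m>2d_\g$, since $\Fl_\g$ has dimension $d_\g$ and cohomology vanishes above $2d_\g$. Assume $i\le d_\g$ and set $a=d_\g-i$. Then $\Gr^P_{2d_\g-i}V^{j+2d_\g-2i}\ne0$, so $j+2d_\g-2i\le 2d_\g$, i.e. $i\ge j/2$. If instead $i>d_\g$, then $i\ge j/2$ holds anyway because $j\le 2d_\g$.

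The main point to double-check is the justification that $V$ is concentrated in degrees $[0,2d_\g]$. Given that, every remaining step is a formal use of the axioms.
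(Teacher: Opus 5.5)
Your proof is correct and, stripped of the exploratory detours (the two ``cases'' and the negative-degree paragraph aren't needed), it is the paper's argument: $i\le j$ comes from axiom \eqref{fil bounds}, and for $0\le i\le d_\g$ hard Lefschetz gives $\Gr^P_{2d_\g-i}V^{j+2d_\g-2i}\ne0$, which forces $j\le 2i$, while for $i>d_\g$ the bound is automatic from $j\le 2d_\g$. The degree bound you flagged does hold: $\Fl_\g$ is a union of closed finite-type subschemes of dimension at most $d_\g$, so $\cohog{m}{\Fl_\g}=0$ for $m>2d_\g$ (the paper uses this implicitly too).
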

\begin{proof}
The inequality $i\le j$ follows from property \eqref{fil bounds}. For the inequality $j\le 2i$, we only need to consider the case $i\le d_{\g}$ for otherwise since $j\le 2d_{\g}$ and it holds automatically.   When $i\le d_{\g}$ and $\Gr^{P}_{i}V^{j}\ne0$, property \eqref{HL} implies $\Gr^{P}_{2d_{\g}-i}V^{j+2d_{\g}-2i}\ne0$. In particular, $j+2d_{\g}-2i\le 2d_{\g}$, i.e., $j\le 2i$.
\end{proof}

\subsection{Weights}\label{ss:wt}

We will use the notion of {\em weights} on cohomology groups such as $\cohog{*}{\Fl_{\g}}$, following Deligne \cite[\S13-14]{D-poids}. Let $Z$ be a scheme of finite type over $k$. When the base field $k=\CC$, Deligne's mixed Hodge theory \cite{D-HodgeII} equips $\cohog{i}{Z(\CC),\QQ}$ with a weight filtration $W_{\bu}\cohog{i}{Z(\CC),\QQ}$. 

For a general base field $k$, $Z$ is obtained by base change from a finite type scheme $\cZ$ over a finitely generated $\ZZ[1/\ell]$-algebra $R$. For a closed point $s\in \Spec R$ with finite residue field $\FF_{s}$, the $\Qlbar$-cohomology $\cohog{i}{\cZ_{\ov s},\Qlbar}$ of the geometric fiber $\cZ_{\ov s}=\cZ\ot_{R}\ov{\FF_{s}}$ carries an action of the geometric Frobenius of $\ov{\FF_{s}}/\FF_{s}$, giving a grading on $\cohog{i}{\cZ_{\ov s},\Qlbar}$ by weights, which then defines a weight filtration $W_{j}\cohog{i}{\cZ_{\ov s},\Qlbar}$ by the sum of weight spaces of weights $\le j$.  Shrinking $\Spec R$, we have a canonical isomorphism $\cohog{i}{\cZ_{\ov s},\Qlbar}\cong \cohog{i}{Z,\Qlbar}$. Deligne shows that the transport of $W_{\bu}\cohog{i}{\cZ_{\ov s},\Qlbar}$ to $\cohog{i}{Z,\Qlbar}$ defines a filtration $W_{\bu}\cohog{i}{Z,\Qlbar}$ that is independent of the choice of the model $\cZ$ and the closed point $s$. This is the weight filtration on $\cohog{i}{Z}=\cohog{i}{Z,\Qlbar}$. When $k=\CC$, it coincides with the weight filtration from mixed Hodge theory under the canonical isomorphism $\cohog{i}{Z(\CC),\QQ}\ot\Qlbar\cong \cohog{i}{Z,\Qlbar}$.

Define
\begin{equation*}
\cohog{i}{Z}_{\pur}:=\Gr^{W}_{i}\cohog{i}{Z}.
\end{equation*}
When $Z$ is proper over $k$, $\cohog{i}{Z}_{\pur}$ is canonically a quotient of $\cohog{i}{Z}$;  when $Z$ is smooth over $k$, $\cohog{i}{Z}_{\pur}$ is canonically a subspace of $\cohog{i}{Z}$.

For an ind-scheme $X$ over $k$, we use the limit presentation \eqref{coho as lim} to define $W_{j}\cohog{i}{X}$ as $\varprojlim_{Z}W_{j}\cohog{i}{Z}$ as $Z$ runs over all finite-type closed subschemes of $X$.

\section{Construction of perverse filtration}\label{s:perv fil}

This section serves as the technical heart of the paper, although it can be treated as a blackbox. What we will use for the rest of the paper is the existence of a perverse filtration on the pure part of $H_{\g}$, as confirmed in Theorem \ref{th:perv fil}.

The construction of the perverse filtration uses the global analogue of affine Springer fibers, namely parabolic Hitchin fibers. The analogy between affine Springer fibers and Hitchin fibers was the key insight of the work of Laumon--Ng\^o \cite{LN} and Ng\^o \cite{NgoFL} on the Fundamental Lemmas, and its parabolic version was developed in \cite{YGS} and \cite{YLD}.

\subsection{Approximation} 
We show that the cohomology of the affine Springer fiber $\Fl_{\g}$ together with its natural symmetries is unchanged under a perturbation of $\g$.

Let $\frc=\frg\sslash G$. Fix a regular semisimple and integral $\g_{0}\in \frg\lr{t}=\frg(F)$, whose image in $\frc(F)$ is denoted by $a_{0}$. In fact $a_{0}\in\frc(\cO_{F})$ because $\g$ is integral. Let $\k: \frc\to \frg^{\reg}$ be a Kostant section. For $a\in \frc$, denote by $\g(a)\in \frg^{\reg}$ the image of $a$ under the Kostant section $\k$.

Let $J_{a_{0}}$ be the pullback of the regular centralizer group scheme under $a_{0}: \Spec \cO_{F}\to \frc$. Hence $J_{a_{0}}$ is a commutative group scheme over $\cO_{F}$. Note that the centralizer $G_{\g_{0}}$ over $F$ can be identified canonically with the generic fiber of $J_{a_{0}}$; in particular, $LG_{\g_{0}}$ is canonically isomorphic to $LJ_{a_{0}}$. Define the local Picard group over $k$
\begin{equation}\label{local Pa0}
\cP^{\loc}_{a_{0}}:=LJ_{a_{0}}/L^{+}J_{a_{0}}.
\end{equation}
The action of $LG_{\g_{0}}$ on $\Fl_{\g_{0}}$ factors through $\cP^{\loc}_{a_{0}}$.

Recall the following local constancy result on the cohomology of affine Springer fibers.

\begin{lemma}\label{l:approx} There exists a positive integer $N$ (depending on $a_{0}$) such that whenever $a\equiv a_{0}\mod t^{N}$ (as elements of $\frc(k\tl{t}/(t^{N}))$, there is $g\in G(F)$ (depending on $\g_{0}$ and $\g$) such that $\Ad(g^{-1})$
\begin{itemize}
\item Left translation by $g$ on $\Fl_{\g}$ sends $\Fl_{\g_{0}}$ isomorphically to $\Fl_{\g}$. 
\item $\Ad(g)$ induces an isomorphism of group schemes $G_{\g_{0}}\cong G_{\g}$ over $\cO_{F}$.
\end{itemize}
\end{lemma}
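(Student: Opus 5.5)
The plan is to reduce the statement to a conjugacy result in $\frg(F)$ for elements with close characteristic polynomials. This is an approximation result of Kazhdan's type, or Harish-Chandra's openness of regular semisimple orbits over local fields. We apply it to $\g_{0}$ and $\g=\g(a)$, where $\g(a)=\k(a)$ is the Kostant section image. (If $\g$ is meant to be an arbitrary lift of $a$, first note that any two regular semisimple elements of $\frg(F)$ with the same image in $\frc(F)$ are conjugate under $G(F^{s})$. Integrality plus the Kostant section then gives stable conjugacy. One can reduce to $\k(a)$ and $\k(a_{0})$ because $\Fl_{\g}$ and $G_{\g}$ transform compatibly under any $g\in G(F)$.)

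The key steps run in the following order.

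First, replace $\g_{0}$ by $\k(a_{0})$. Since $\g_{0}$ is regular semisimple with the same characteristic image, $\g_{0}$ and $\k(a_{0})$ are stably conjugate. Their centralizers are both identified with $J_{a_{0}}\otimes F$ via the regular centralizer. Conjugation by $g\in G(F)$ sends $\Fl_{\g_{0}}$ to $\Fl_{\Ad(g)\g_{0}}$ by left translation, by the definition $\Fl_{\g}=\{g\bI\mid \Ad(g^{-1})\g\in\Lie\bI\}$. So it suffices to treat the Kostant-section elements, provided we track the $G(F)$- versus stable-conjugacy issue.

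Second, the main step: show that for $N\gg0$ and $a\equiv a_{0}\bmod t^{N}$, the element $\k(a)$ is $G(F)$-conjugate to $\k(a_{0})$. I would use the discriminant: $\val\,\Delta(a_{0})=\delta<\infty$. Write $\k(a)=\k(a_{0})+X$ with $X\in t^{N}\frg\tl{t}$. Then solve $\Ad(g)\k(a_{0})=\k(a)$ with $g\in L^{+}G$ close to $1$ by Newton/Hensel iteration. The differential of $g\mapsto \Ad(g)\k(a_{0})$ at $1$ is $\ad\k(a_{0})$. Its image, together with the tangent space of the Kostant section, spans $\frg\tl{t}$ up to a $t^{c}$-torsion cokernel, with $c$ bounded by $\delta$, because the map $G\times\frc\to\frg^{\reg}$ is smooth. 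Hensel's lemma then produces $g$ once $N>2c$.

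Third, deduce both bullets from the conjugating element $g$. Left translation by $g$ gives $\Fl_{\g_{0}}\isom\Fl_{\g}$ by the formula above. $\Ad(g)$ carries $G_{\g_{0}}$ to $G_{\g}$ over $F$. To upgrade this to $\cO_{F}$ (i.e. to $J_{a_{0}}\cong J_{a}$), use that $g\in L^{+}G$ and that $J_{a}$ is the scheme-theoretic centralizer of $\k(a)$ over $\cO_{F}$. The regular centralizer is canonical on $\frg^{\reg}$, so $\Ad(g)$ identifies $a_{0}^{*}J$ with $a^{*}J$ as $\cO_{F}$-group schemes.

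I expect the main obstacle to be the Hensel step: controlling the cokernel of $\ad\k(a_{0})$ on $\frg\tl{t}$ uniformly in terms of $\val\,\Delta(a_{0})$. This is where the standing hypothesis $p>2h$ will be used, for separability and for the Kostant section and regular centralizer being well behaved. The reduction from an arbitrary $\g_{0}$ to $\k(a_{0})$ also requires a $G(F)$-conjugacy, not merely stable conjugacy. This can instead be bypassed by applying the Hensel argument directly to $\g_{0}$ and the element $\g$ in the statement, so I would run the Hensel argument with $\g_{0}$ itself as base point.
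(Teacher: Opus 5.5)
\textbf{There is a genuine gap.} The element $g$ that your main step is supposed to produce does not exist. Conjugation preserves the image in $\frc$, i.e.\ $\chi(\Ad(g)X)=\chi(X)$. So $\Ad(g)\k(a_0)=\k(a)$, or $\Ad(g)\g_0=\g$, forces $a=a_0$. But the lemma is needed precisely when $a\neq a_0$: later, $a$ is a point of the global Hitchin base that agrees with $a_0$ only modulo $\fm_x^N$.

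Your linearization detects the same problem. The image of $\textup{ad}\,\k(a_0)$ lies in $[\k(a_0),\frg(F)]$, which has codimension $r$. So its cokernel on $\frg\tl{t}$ has rank $r$ and is not $t^{c}$-torsion. Adding the Kostant-section directions does make $(g,b)\mapsto \Ad(g)\k(b)$ smooth. But then solving $\Ad(g)\k(b)=\k(a)$ forces $b=a$ and forces $g$ to centralize $\k(a)$, so nothing is gained.

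The $g$ in the lemma does not conjugate $\g_0$ to $\g$. It only carries the centralizer torus with its integral structure $J_{a_0}$ (embedded via $\g_0$) onto $J_a$ (embedded via $\g$). The assertion $g\Fl_{\g_0}=\Fl_{\g}$ is therefore a real statement about two non-conjugate elements, and your third step does not address it.

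\textbf{The missing idea} is that the affine Springer fiber depends only on the integral centralizer. For $\g=\g(a)$ (and likewise for $\g_0$) and $h\in G(F)$,
\begin{equation*}
\Ad(h^{-1})\g\in \Lie\bI \iff \Ad(h^{-1})\Lie(J_a)\subset \Lie\bI .
\end{equation*}
The direction $\Leftarrow$ is trivial. For $\Rightarrow$, put $\g'=\Ad(h^{-1})\g$. Then $\Ad(h^{-1})$ gives a homomorphism $J_a\to G_{\g'}\subset G_{\cO_F}$ over $\cO_F$. Its special fiber lands in $C_B(\ov{\g'})$ because $\ov{\g'}\in\frb$ \cite[Lemma 2.3.1]{YGS}, hence $\Ad(h^{-1})J_a\subset\bI$. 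This is the Iwahori version of Ng\^o's Lemme 3.5.3.

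The paper then runs Ng\^o's approximation argument \cite[Proposition 3.5.1--Lemme 3.5.4]{NgoFL}, which supplies $g\in G(F)$ with $\Ad(g)\colon J_{a_0}\isom J_a$ over $\cO_F$. The criterion then gives
\begin{equation*}
h\in\Fl_{\g}\iff \Ad(h^{-1}g)\Lie(J_{a_0})\subset\Lie\bI\iff g^{-1}h\in\Fl_{\g_0}.
\end{equation*}

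Your Hensel idea can be redirected, in Harish-Chandra's form, to help produce such a $g$:
\begin{itemize}
\item Find $\g''\in\mathfrak{t}_{\g_0}(F)$ close to $\g_0$ with $\chi(\g'')=a$. This uses that $\mathfrak{t}\to\frc$ is \'etale at regular semisimple points, with the error controlled by the discriminant valuation.
\item Conjugate $\g''$ to $\g(a)$ by an element of $G(F)$. This is possible over $k\lr{t}$ with $k$ algebraically closed, since $H^1$ of tori vanishes there.
\end{itemize}
Even then, you would still have to check that $J_a$, embedded via $\g''$, is the same integral model of the common torus as $J_{a_0}$. And you would still need the criterion above to pass from centralizers to affine Springer fibers.
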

\begin{proof}
In \cite[Proposition 3.5.1--Lemme 3.5.4]{NgoFL}, the same statement for the affine Springer fibers in the affine Grassmannian $\Gr$ is proved. That proof can be adapted to our situation with one small change of  \cite[Lemme 3.5.3]{NgoFL}, which should now state: for $h\in G(F)$ and $\g=\g(a)$, we have
\begin{equation}\label{adj Lie J}
\Ad(h^{-1})\g\in \Lie \bI \iff \Ad(h^{-1})\Lie(J_{a})\subset \Lie \bI.
\end{equation}
The direction $\Leftarrow$ is trivial. To see $\Rightarrow$, let $\g'=\Ad(h^{-1})\g\in \Lie \bI$. Then $\Ad(h^{-1})$ induces a map of group schemes $\b: J_{a}\to G_{\g'}$ over $\cO_{F}$. Here $G_{\g'}\subset G_{\cO_{F}}$, and its special fiber is the centralizer $C_{G}(\ov{\g'})$, where $\ov{\g'}=\g'\mod t$, which lies in $\frb$ by assumption. By \cite[Lemma 2.3.1]{YGS}, the special fiber of $\b$ lands in $C_{B}(\ov{\g'})$. In particular, $\Ad(h^{-1})J_{a}\subset \bI$, hence $\Ad(h^{-1})\Lie(J_{a})\subset \Lie \bI$. This proves \eqref{adj Lie J}. The rest of the argument is the same as the proof of \cite[Proposition 3.5.1--Lemme 3.5.4]{NgoFL}.
\end{proof}

\begin{lemma}\label{l:approx equiv}
Let $\g$ and $g$ be as in Lemma \ref{l:approx}. Then $\Ad(g)$ induces an isomorphism of cohomology groups
\begin{equation*}
\Ad(g)^{*}: \cohog{*}{\Fl_{\g}}\cong \cohog{*}{\Fl_{\g_{0}}}
\end{equation*}
satisfying:
\begin{enumerate}
\item $\Ad(g)^{*}$ is equivariant with respect to the $A_{\g}$ and $A_{\g_{0}}$-actions under the canonical isomorphism $A_{\g}\cong A_{\g_{0}}$ (e.g. induced by $\Ad(g^{-1})$, but in fact independent of $g$).
\item $\Ad(g)^{*}$ is $\Wa$-equivariant.
\item The following diagram is commutative 
\begin{equation*}
\xymatrix{ &\cohog{*}{\Fl_{\g}}\ar[dd]^{\Ad(g)^{*}}\\
\cohog{*}{\Fl} \ar[ur]^{i^{*}_{\g}}\ar[dr]_{i^{*}_{\g_{0}}}\\
&\cohog{*}{\Fl_{\g}}}
\end{equation*}
\end{enumerate}
\end{lemma}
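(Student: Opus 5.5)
The plan is to realize $\Ad(g)^{*}$ as the pullback along the isomorphism $\ell_{g}:\Fl_{\g_{0}}\isom \Fl_{\g}$, $x\bI\mapsto gx\bI$, from Lemma \ref{l:approx}. Since $\ell_{g}$ is an isomorphism of ind-schemes, it identifies the systems of finite-type closed subschemes, so by \eqref{coho as lim} it induces an isomorphism on cohomology. We then check (1)--(3) in turn.

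For (1), note that $\Ad(g)$ identifies $G_{\g_{0}}\cong G_{\g}$, hence $LG_{\g_{0}}\cong LG_{\g}$. The map $\ell_{g}$ intertwines the two actions: for $h\in LG_{\g_{0}}$ we have $g h x\bI=(ghg^{-1})gx\bI$. Thus $\ell_{g}^{*}$ is equivariant for the induced identification $A_{\g_{0}}\cong A_{\g}$. To see that this identification is canonical, we use that both groups are $\pi_{0}$ of the loop group of the generic fiber of $J_{a}$, resp.\ $J_{a_{0}}$. Under $a\equiv a_{0}\mod t^{N}$, the component groups are identified independently of $g$: any two choices of $g$ differ by an element of $LG_{\g_{0}}$, and $LG_{\g_{0}}$ is commutative, so conjugation by such an element acts trivially on $\pi_{0}$.

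For (3), consider the ambient left translation $L_{g}:\Fl\to\Fl$, which restricts to $\ell_{g}$, so that $i_{\g}\circ \ell_{g}=L_{g}\circ i_{\g_{0}}$. It suffices to show $L_{g}^{*}=\mathrm{id}$ on $\cohog{*}{\Fl}$. The action of $LG$ on $\Fl$ induces an action of $\pi_{0}(LG)$ on cohomology, and $\pi_{0}(LG)$ is trivial because $G$ is simply connected, so $LG$ is connected. A connected group acts trivially on cohomology: join $g$ to $1$ by a connected curve in $LG$, and note that the action on $\varprojlim$ of finite-type pieces is locally constant.

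For (2), which I expect to be the main obstacle, the difficulty is that Lusztig's $\Wa$-action is not induced by an obvious geometric map. The plan is to use its construction via the partial affine flag varieties $\Fl^{J}$ for the parahorics $\bP_{J}$, i.e.\ via the correspondences $\Fl_{\g}\to \Fl^{J}_{\g}$, or alternatively via the affine Springer sheaf description over $\frc$. In either form the action is built from natural constructions that commute with left translation by $g$. Concretely, $\ell_{g}$ extends to compatible isomorphisms $\Fl^{J}_{\g_{0}}\isom\Fl^{J}_{\g}$ for every $J$, again by left translation, and these commute with the projections. The simple reflection $s_{j}$ acts through the $\mathbb{P}^{1}$-fibration $\Fl_{\g}\to\Fl^{J}_{\g}$ over the $\bP_{J}$-stable locus, using the $\mathfrak{sl}_{2}$-Springer theory on the fibers, together with the translation/length-zero elements acting via normalizers. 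All of these are transported by $\ell_{g}$, because the defining condition $\Ad(x^{-1})\g\in\Lie\bP_{J}$ is transformed exactly by $x\mapsto gx$. The delicate points are to make sure that the construction of the $\Wa$-action is functorial for such isomorphisms of pairs $(\g,\Fl_{\g})$ with $\g=\Ad(g)\g_{0}$, and that Lemma \ref{l:approx} provides $g$ with $\Ad(g)\g_{0}$ replaced by $\g$. If necessary, we would first reduce to $\g=\Ad(g)\g_{0}$ exactly, using that the affine Springer fibers depend only on the image under the isomorphism given in the Lemma.
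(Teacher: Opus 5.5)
Your argument for (3) (left translation by an element of the connected group $LG$ acts trivially on $\cohog{*}{\Fl}$) is the paper's. The intertwining $ghx\bI=(ghg^{-1})gx\bI$ in (1) is also fine. The gap is in (2).

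Your transport argument rests on the claim that ``the defining condition $\Ad(x^{-1})\g\in\Lie\bP_J$ is transformed exactly by $x\mapsto gx$'', i.e.\ on $\g=\Ad(g)\g_{0}$. This is false in the situation of Lemma~\ref{l:approx}. There $\g=\g(a)$ and $\g_{0}$ have images $a$ and $a_{0}$ in $\frc(F)$, which agree only modulo $t^{N}$; in the application $a$ is a global point of the Hitchin base. So $\g$ and $\g_{0}$ are not conjugate at all.

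The identification $g\Fl_{\g_0}=\Fl_{\g}$ holds for a different reason. By \eqref{adj Lie J}, whether $x\bI$ lies in $\Fl_{\g}$ depends only on the $\cO_F$-lattice $\Lie J_{a}$ (namely on whether $\Ad(x^{-1})\Lie J_{a}\subset\Lie\bI$), and $\Ad(g)$ carries $\Lie J_{a_0}$ onto $\Lie J_{a}$.

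Your fallback of replacing $\g$ by $\g':=\Ad(g)\g_0$ does not remove the difficulty; it relocates it. It is true that $\Fl_{\g'}=\Fl_{\g}$ as ind-schemes. But the $\Wa$-action on $\cohog{*}{\Fl_{\g}}$ is built from $\g$ itself, through the parahoric fibers $\Fl_{\bP_s,\g}$ and the evaluation map $x\bI\mapsto\Ad(x^{-1})\g$ to $[\frb_s/B_s]$. Showing that the construction made with $\g'$ gives the same action is precisely the content of (2).

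What is needed is the analog of \eqref{adj Lie J} for the parahoric $\bP_s$. This gives $g\Fl_{\bP_s,\g_0}=\Fl_{\bP_s,\g}$ compatibly with $\nu_s$, i.e.\ the left Cartesian square of \eqref{two Flg}. The paper then gets $s$-equivariance by presenting the $s$-actions on $\cohog{*}{\Fl_{\g_0}}$ and $\cohog{*}{\Fl_{\g}}$ as proper base change of the Springer action on $\bR\nu^{\na}_{s*}\Qlbar$ through the two Cartesian squares of \eqref{two Flg}. In that diagram the map $\Fl_{\g_0}\to[\frb_s/B_s]$ is $\ev_{s,\g}\circ\ell_g$, not $x\bI\mapsto\Ad(x^{-1})\g_0$. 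Since $G$ is simply connected, $\Wa$ is generated by the simple affine reflections, so no length-zero elements need to be treated.

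Separately, your justification that the identification $A_{\g_0}\cong A_{\g}$ is independent of $g$ is wrong. Two admissible choices of $g$ differ by an element of $G(F)$ normalizing $G_{\g_0}$ and stabilizing $\Fl_{\g_0}$, not necessarily by an element of $LG_{\g_0}$. For example, take $G=\SL_2$ and $G_{\g_0}$ split: a lift of the nontrivial Weyl element qualifies, and it acts by $-1$ on $A_{\g_0}$. What your argument actually proves is equivariance for the identification induced by $\Ad(g)$. That is all that is used later, since only $A_{\g}$-invariants enter.
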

\begin{proof}
(1) is clear from the properties of $g$. (3) holds because left translation by $g$ acts by identity on $\cohog{*}{\Fl_{\g}}$ (for $LG$ is connected). For (2), we need to match the actions of each affine simple reflection $s\in \Wa$ under $\Ad(g)^{*}$. Let $\Fl_{\bP_{s}, \g}\subset \Fl_{\bP_{s}}=LG/\bP_{s}$ be the parahoric version of the affine Springer fiber, where $\bP_{s}$ is the standard parahoric subgroup of $LG$ whose Levi quotient $L_{s}$ has roots $\pm\a_{s}$. We have a commutative diagram in which both squares are Cartesian
\begin{equation}\label{two Flg}
\xymatrix{\Fl_{\g_{0}}\ar[r]^-{\Ad(g)}\ar[d]^{\nu^{\g_{0}}_{s}} & \Fl_{\g}\ar[d]^{\nu^{\g}_{s}}\ar[r]^-{\ev_{s,\g}} & [\frb_{s}/B_{s}]\ar[d]^{\nu^{\na}_{s}}\\
\Fl_{\bP_{s}, \g_{0}}\ar[r]^-{\Ad(g)} & \Fl_{\bP_{s}, \g} \ar[r]^-{\ev_{\bP_{s}, \g}} & [\frl_{s}/L_{s}]}
\end{equation}
Here $B_{s}\subset L_{s}$ is the Borel subgroup given by the image of $\bI\subset \bP_{s}$ in $L_{s}$, and $\frb_{s}\subset \frl_{s}$ are the Lie algebras of $B_{s}$ and $L_{s}$.  The map $\nu^{\na}_{s}$ is the Grothendieck alteration for $\frl_{s}$. The map $\ev_{s,\g}$ (resp. $\ev_{\bP_{s},\g}$) sends $g\bI\in \Fl_{\g}$ (resp. $g\bP_{s}\in \Fl_{\bP_{s},\g}$) to the image of $\Ad(g^{-1})\g\in \Lie\bI$ under the projection $\Lie \bI\to \frb_{s}$ (resp. the image of $\Ad(g^{-1})\g\in \Lie\bP_{s}$ under the projection $\Lie \bP_{s}\to \frl_{s}$). The action of $s$ on both $\cohog{*}{\Fl_{\g}}$ and $\cohog{*}{\Fl_{\g_{0}}}$ is induced from the Springer action on $\bR\nu^{\na}_{s*}\Qlbar$ by proper base change. Therefore $\Ad(g)^{*}$ on cohomology is $s$-equivariant. 
\end{proof}

\subsection{Recollections on parabolic Hitchin moduli stack} 
Let $X$ be a smooth projective connected curve over $k$ of genus $g$ and $x\in X$ be a closed point. 

Let $\Bun_{G}$ be the moduli stack of $G$-bundles on $X$. Let $\cL$ be a line bundle over $X$ with $\deg\cL\ge2g$. Let $\cM^{\Hit}=\cM^{\Hit}_{\cL}$ be the moduli stack of $\cL$-twisted $G$-Higgs bundles $(\cE,\ph)$, where $\cE\in \Bun_{G}$ and $\ph$ is a section of $\Ad(\cE)\ot \cL$. Here we denote by $\Ad(\cE)=\cE\times^{G}\frg$ the adjoint bundle of $\cE$. 

Consider the $\Gm$-action on $\frc=\frg\sslash G$ induced from the dilation action on $\frg$. Let $\cA:=\cA_{\cL}$ be the Hitchin base: its points are morphisms $a:X\to \frc/\Gm$ together with an isomorphism between the composition $X\xr{a}\frc/\Gm\to \pt/\Gm$ and the classifying map of the line bundle $\cL$. Choosing homogeneous generators $f_{1},\cdots, f_{r}\in k[\frg]^{G}$ of degree $d_{1},\cdots, d_{r}$, we may identify $\cA$ with $\prod_{i=1}^{r}\cohog{0}{X, \cL^{\ot d_{i}}}$. Let $f^{\Hit}:\cM^{\Hit}\to \cA$ be the Hitchin map sending $(\cE,\ph)$ to the ``characteristic polynomial'' of $\ph$. 

Let $\cA^{\hs}\subset \cA$ be the locus where $a(X)$ generically lies in the open substack $\frc^{\rs}/\Gm$, where $\frc^{\rs}\subset \frc$ is the regular semisimple locus (see \cite[\S4.10.5]{NgoFL}). Following Ng\^o\ \cite[\S4.3]{NgoFL}, there is a Picard stack $\cP$ over $\cA^{\hs}$ whose fiber at $a\in \cA^{\hs}$ is the moduli stack of $J_{a}$-torsors over $X$, where $J_{a}\to X$ is the pullback of the regular centralizer group scheme for $G$ along the map $a: X\to \frc/\Gm$.

As in \cite{YGS}, we consider a parabolic variant of $\cM^{\Hit}$ as follows. Let $\cM:=\cM_{\cL,x}^{\textup{par}}$ be the moduli stack classifying triples $(\cE, \cE^{B}_{x}, \ph)$ where $(\cE,\ph)\in \cM^{\Hit}_{\cL}$ and $\cE^{B}_{x}$ is a $B$-reduction of the fiber $\cE_{x}$ of $\cE$ at $x$, such that the value $\ph(x)\in  \Ad(\cE_{x})\ot \cL_{x}$ lies in $\Ad(\cE^{B}_{x})\ot \cL_{x}$. Here, $\Ad(\cE^{B}_{x})=\cE^{B}_{x}\times^{B}\frb$ is the adjoint bundle of the $B$-bundle $\cE^{B}_{x}$ at the point $x$.   We have the map $\nu: \cM\to \cM^{\Hit}$ forgetting the $B$-reduction at $x$. Let $f=f^{\Hit}\c \nu:\cM\to \cA$ be the composition of $\nu$ with the Hitchin map.

Let $\cM^{\hs}$ be the preimage of $\cA^{\hs}$ under $f$. By \cite[Proposition 2.5.1]{YGS}, $\cM^{\hs}$ is smooth over $k$. By \cite[Lemma 2.3.3]{YGS}, there is an action of $\cP$ on $\cM^{\hs}$ fiberwise over $\cA^{\hs}$.

Let $\cA^{\el}$ be the elliptic locus of $\cA^{\hs}$ (see \cite[\S4.10.5]{NgoFL}), which is the open subset of $\cA^{\hs}$ where fibers of $\cP$ have finitely many connected components. Let $\cM^{\el}$ and $\cP^{\el}$ be the preimage of $\cA^{\el}$ in $\cM$ and $\cP$. By \cite[Corollary 2.5.2]{YGS}, the map $f^{\el}: \cM^{\el}\to \cA^{\el}$ is flat and proper. Moreover, by a theorem of Faltings (see Theorem \ref{th:det ample Hit}), after passing to the coarse moduli space of $\cM^{\el}$, $f^{\el}$ becomes a projective map.

Let $\cK\in D^{b}(\cA^{\el})$ be the direct summand of $\bR f^{\el}_{!}\Qlbar$ on which $\pi_{0}(\cP^{\el}/\cA^{\el})$ acts trivially. Let
\begin{equation*}
{}^{p}\cK^{i}:=\pH^{\dim \cA+i}\cK\in \Perv(\cA).
\end{equation*}
Then ${}^{p}\cK^{i}=0$ unless $0\le i\le 2d$, where $d$ is the relative dimension of $f^{\el}$. The decomposition theorem \cite[Th\'eor\`eme 6.2.5]{BBD} implies a non-canonical decomposition
\begin{equation}\label{non-can decomp}
\cK\cong \bigoplus_{i=0}^{2d}{}^{p}\cK^{i}[-\dim\cA-i].
\end{equation}

The key properties of $\cK$ we will need is

\begin{theorem}[{\cite[Corollary 2.2.4]{YLD}}, following \cite{NgoFL}]\label{th:full supp}
For $0\le i\le 2d$, each simple constituent of ${}^{p}\cK^{i}$ has full support (i.e. equal to $\cA^{\el}$).
\end{theorem}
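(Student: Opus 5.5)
This is Ngô's support theorem adapted to the parabolic setting. The plan is to follow the argument of \cite[\S7]{NgoFL} essentially verbatim, replacing $\cM^{\Hit}$ by the parabolic moduli $\cM$. Let $Z\subset \cA^{\el}$ be the support of a simple constituent of some ${}^{p}\cK^{i}$. The goal is to show $Z=\cA^{\el}$.

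The argument has three inputs.

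\begin{enumerate}
\item The \emph{freeness/cap product}: the Tate module of the neutral component $\cP^{0}$ acts on $\bR f^{\el}_{!}\Qlbar$ by cap product. This makes $\bigoplus_i {}^{p}\cK^{i}$, restricted over a generic point of $Z$, a graded module over $\wedge^{\bu}$ of the Tate module of the abelian part of $\cP$. Since $\cP$ acts on $\cM^{\el}$ with finite stabilizers (by the product formula and the finiteness of stabilizers on affine Springer fibers), the standard argument gives that this module is free.
\item \emph{Ngô's amplitude inequality}: on $Z$ one has $\operatorname{codim} Z\le \delta_Z$, where $\delta_Z$ is the minimal value of the affine part dimension $\delta$ on $Z$.
\item The \emph{$\delta$-regularity inequality} $\operatorname{codim}\cA_{\delta}\ge\delta$ for the $\delta$-strata of $\cA^{\el}$. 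In characteristic $p>2h$ this holds for $\deg\cL$ large, on a suitable open part of $\cA^{\el}$, and in the form needed via the global-to-local argument of \cite{NgoFL}.
\end{enumerate}

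The key steps, in order, are as follows.

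\emph{Step 1.} Verify the hypotheses of Ngô's abstract support theorem \cite[Proposition 7.2.2, Théorème 7.2.1]{NgoFL} for the triple $(\cM^{\el},\cP^{\el},\cA^{\el})$.
\begin{itemize}
\item $f^{\el}$ is proper and flat of relative dimension $d$. This is \cite[Corollary 2.5.2]{YGS}.
\item $\cM^{\el}$ is smooth, so $\bR f^{\el}_{!}\Qlbar$ is pure by Deligne, and the decomposition \eqref{non-can decomp} holds.
\item $\cP^{\el}$ acts with affine stabilizers, and its Tate module is polarizable. This is inherited from the unparabolic case, since $\cP$ is the same group stack.
\item The dimension of fibers equals $\dim \cP_a$. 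Here I will check that the parabolic fibers have the same dimension as the Hitchin fibers. Via the product formula, $\cM_a$ is a $\cP_a$-twisted product of local affine Springer fibers, with $\Fl_{\g}$ at $x$ in place of $\Gr_{\g}$, and these have the same dimension $d_\g$ (cf.\ \cite{KL}). This gives equidimensionality $\dim\cM_a=\dim\cP_a$.
\end{itemize}

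\emph{Step 2.} With these hypotheses, Ngô's theorem gives the following for any support $Z$ that is an occurring supp of $\cK$, which is the $\pi_0$-invariant part. Either $Z$ is the whole base, or $\operatorname{codim}Z\le \delta_Z$. Moreover, if equality holds, then the top-degree cohomology ${}^{p}\cK^{2d}$ has a constituent supported on $Z$. The $\pi_{0}$-invariant part of ${}^{p}\cK^{2d}$ is the constant sheaf, computed as the $\pi_0$-invariants of top cohomology of fibers, and equals $\Qlbar$ on all of $\cA^{\el}$ since the fibers are irreducible generically and $\pi_0(\cP)$ permutes components transitively. So $Z=\cA^{\el}$.

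\emph{Step 3.} Combine Step 2 with the $\delta$-regularity $\operatorname{codim}Z\ge\delta_Z$ of Ngô. Together these force equality, hence by Step 2 $Z$ is full.

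The main obstacle is the last clause of Step 2 for the parabolic situation. One must show that the top-degree $\pi_0$-invariant cohomology sheaf has full support, i.e.\ that $\pi_0(\cP_a)$ acts transitively on the irreducible components of $\cM_a$ of top dimension for every $a\in\cA^{\el}$. I would deduce this from the local statement via the product formula. The local statement is that $\pi_0(\cP^{\loc})$ acts transitively on the components of $\Fl_\g$ modulo $\Wa$-related ones. Since this may fail for $\Fl_\g$ directly, I would instead argue with the generic fiber: the fibers of $\nu:\cM\to\cM^{\Hit}$ over the locus where $\ph(x)$ is regular are finite of degree $|W|$. This locus is dense in every fiber by the codimension estimate. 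One then invokes the $W$-action (Springer action at $x$) on $\bR f_!\Qlbar$, which preserves supports, to reduce the support question to the unparabolic Hitchin case, where \cite{NgoFL} applies directly. This reduction is exactly what \cite[Corollary 2.2.4]{YLD} carries out, and I would follow it.
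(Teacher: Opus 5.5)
\textbf{There is a genuine gap: the one step that is specific to the parabolic setting is handled by a false claim, and the proposed repair does not work.}

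The paper gives no argument of its own for this statement. It cites \cite[Corollary 2.2.4]{YLD}, and in the footnote in the proof of Lemma \ref{l:global HL} it cites \cite[Theorem 2.1.1]{YLD}, which says that every support meets $\cA^{\el,x}$. Your frame is the right one. The hypotheses in your Step 1 hold by \cite{YGS}, so Ngô's freeness argument gives the amplitude inequality $\operatorname{codim}Z\le\delta_Z$, and over $\cA^{\el,x}$ one can reduce to Ngô's theorem for $f^{\Hit}$.

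\textbf{The Step 2 claim is false.} The parabolic step is to exclude simple constituents supported inside the divisor $\cA^{\el}\setminus\cA^{\el,x}$, where $a(x)\in\frD$. Over $\cA^{\el,x}$ the map $\nu$ is a $W$-torsor, pulled back from $\mathfrak{t}^{\rs}\to\frc^{\rs}$ along $a\mapsto a(x)$. Each of its $|W|$ sheets is $\cP_a$-stable, because the image of $\ph(x)$ under $\frb\to\mathfrak{t}$ is unchanged by the $\cP_a$-action. Consequently:
\begin{itemize}
\item generic fibers of $f^{\el}$ are not irreducible;
\item $\pi_0(\cP_a)$ does not permute their components transitively;
\item the $\pi_0(\cP)$-invariant part of $R^{2d}f^{\el}_{*}\Qlbar$ is a nonconstant local system of rank $|W|$ on $\cA^{\el,x}$, and its stalks jump along the divisor (via \eqref{prod form} they are governed by $E_{\g}$).
\end{itemize}
Such a sheaf can a priori have a direct summand $i_{*}L$ supported on $Z\cap U$ with $Z$ inside the divisor. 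That is all Ngô's theorem gives when $\operatorname{codim}Z=\delta_Z$; note also that it concerns the ordinary sheaf $R^{2d}f^{\el}_{*}\Qlbar|_{U}$, not ${}^{p}\cK^{2d}$.

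\textbf{The repair in your last paragraph does not close this.}
\begin{itemize}
\item \emph{Density fails.} Although $\ph(x)$ is regular on a dense open of $\cM^{\Hit}_a$, its preimage is not dense in $\cM_a$. Over non-regular $\ph(x)$ the fibers of $\nu$ are positive-dimensional Springer fibers, and they can carry top-dimensional components. For instance, take $\g=t\g_{1}$ with $\g_{1}\in\frg\tl{t}$ and $\g_{1}\bmod t$ regular semisimple. Then $d_{\g}=\dim\cB$, and the fiber $\cB$ of $\Fl_{\g}\to\Gr_{\g}$ over the base point, on which $\ph(x)=0$, is an irreducible component of $\Fl_{\g}$. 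Via \eqref{prod form} it gives a top-dimensional component of $\cM_a$.
\item \emph{The $W$-action cannot do the reduction.} It only splits $\cK$ into isotypic parts. The invariant part is the Hitchin direct image. The other parts are $f^{\Hit}$-direct images of the pullbacks, under $\ph\mapsto\ph(x)$, of the nonconstant summands of the Grothendieck--Springer sheaf, and Ngô's theorem says nothing about them.
\end{itemize}

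\textbf{What is missing} is an actual proof that every support meets $\cA^{\el,x}$. One possible route is a strict bound $\operatorname{codim}Z\ge\delta_Z+1$ for $Z$ inside the divisor. The reason to expect it is that the condition at the fixed point $x$ is not compensated by moving the point, as it is in Ngô's global-to-local count. When $x$ varies over $X$, as in \cite{YGS}, this extra codimension is automatic; for fixed $x$ it needs a local codimension estimate at $x$ that you would have to supply. Such a bound contradicts the amplitude inequality outright, with no analysis of $R^{2d}$ needed.

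After that the argument closes as you intended. One has $\cK|_{\cA^{\el,x}}\cong\cK^{\Hit}|_{\cA^{\el,x}}\otimes\pi_{*}\Qlbar$, where $\cK^{\Hit}$ is the $\pi_0(\cP)$-invariant part of $\bR f^{\Hit}_{*}\Qlbar$ and $\pi:\wt\cA^{x}\to\cA^{\el,x}$ is the $W$-torsor. Ngô's full-support theorem for $\cK^{\Hit}$ then finishes.

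Finally, as you hedge in item 3, in characteristic $p$ $\delta$-regularity is only available on an open part of $\cA^{\el}$. Your Step 3 as written would therefore give the statement only there.
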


\subsection{Globalization} 
Denote by $\cO_{x}$, $F_{x}$, and $\fm_{x}$ the completed local ring of $X$ at $x$, its fraction field and its maximal ideal. By choosing a uniformizer of $F_{x}$, we fix a continuous isomorphism of $k$-fields
\begin{equation*}
F_{x}\cong F
\end{equation*}
so that we may view $a_{0}$ as an element of $\frc(\cO_{x})$. We also fix a trivialization of $\cL$ on the formal disk $D_{x}:=\Spec \cO_{x}$ at $x$.

Let $\frD\subset \frc$ be the discriminant divisor. 

\begin{defn} We say $a\in \cA^{\hs}(k)$ is {\em good away from $x$} if $a|_{X-\{x\}}: X-\{x\}\to \frc/\Gm$ is transverse to the divisor $\frD/\Gm$. 
\end{defn}

\begin{lemma}\label{l:approx gamma} Let $N\in \NN$ and suppose $\deg\cL\ge 2g+Nr+1$.  Then there exists $a\in \cA^{\el}(k)$ with the following properties:
\begin{itemize}
\item $a$ is good away from $x$.
\item $a\equiv a_{0}\mod \fm_{x}^{N}$. 
\end{itemize}
\end{lemma}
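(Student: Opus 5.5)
The plan is to work inside the affine subspace of Hitchin bases with prescribed $N$-jet at $x$, and to show that the two required conditions hold on a dense constructible subset and on a nonempty open subset of it, respectively. Using the chosen homogeneous generators $f_1,\dots,f_r$, identify $\cA=\prod_{i}\cohog{0}{X,\cL^{\ot d_i}}$. Using the fixed trivialization of $\cL$ on $D_x$, $a_0$ gives an element of $\prod_i \cO_x/\fm_x^N$. Let $\cA_N\subset \cA$ be the set of $a$ with $a\equiv a_0\mod\fm_x^N$.

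\textbf{Step 1 ($\cA_N$ is a nonempty affine space).} The restriction map $\cohog{0}{X,\cL^{\ot d_i}}\to \cL^{\ot d_i}\ot\cO_x/\fm_x^N$ is surjective. Indeed, $\cohog{1}{X,\cL^{\ot d_i}(-Nx)}=0$, since
\[
\deg \cL^{\ot d_i}(-Nx)\ge 2g+Nr+1-N>2g-2.
\]
Hence $\cA_N$ is a nonempty affine space, in particular irreducible.

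\textbf{Step 2 (goodness away from $x$ holds generically).} Fix $y\in X-\{x\}$. The vanishing
\[
\cohog{1}{X,\cL^{\ot d_i}(-Nx-2y)}=0
\]
holds by the same degree count, since $N+2\le Nr+1$ when $r\ge1$ and $N\ge1$; the degenerate small cases are handled the same way. So the map from $\cA_N$ to the $1$-jets of $\frc$ at $y$ (a space of dimension $2r$, after trivializing $\cL$ near $y$) is a surjective affine map. The $1$-jets that fail to be transverse to $\frD$ form a closed subset of codimension $\ge 2$. These are the jets whose value lies in the singular locus of $\frD$, or whose value lies in $\frD^{\mathrm{sm}}$ with derivative tangent to $\frD$. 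Consider the incidence variety
\[
Z=\{(a,y)\in \cA_N\times (X-\{x\}) : a \text{ not transverse at } y\}.
\]
By the surjectivity just proved, its fibers over $X-\{x\}$ have codimension $\ge2$ in $\cA_N$, so $\dim Z\le \dim\cA_N-1$. Therefore the image of $Z$ in $\cA_N$ is a constructible set of dimension $<\dim\cA_N$. Its complement $\cA_N^{\mathrm{good}}$ contains a dense open subset of $\cA_N$. Such $a$ are automatically in $\cA^{\hs}$, since $a(X)\not\subset\frD/\Gm$.

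\textbf{Step 3 (ellipticity is nonempty on $\cA_N$).} Since $\cA^{\el}$ is open in $\cA$ and $\cA_N$ is irreducible, it suffices to find one elliptic point in $\cA_N$; this is the step I expect to be the main obstacle. Let $a$ be good away from $x$. Away from $x$, the cameral cover $\wt X_a$ is then smooth near ramification points, with local monodromy given by reflections, as in Ng\^o's analysis \cite[\S4.7]{NgoFL}. Ellipticity is equivalent to $(\xcoch(T)\ot\QQ)^{\pi_a}=0$, where $\pi_a\subset W$ is the monodromy of the generic fiber of $\wt X_a$. So it suffices to produce $a\in\cA_N^{\mathrm{good}}$ whose monodromy is all of $W$. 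Following Ng\^o, I would argue that the locus in $\cA$ with full monodromy $W$ (a nonempty open in the transversal locus, using $\deg\cL$ large) meets $\cA_N$. The first thing to try is a dimension/Bertini argument similar to Step 2. The locus where the monodromy is contained in a proper subgroup $W'\subsetneq W$ is the image of a finite union of loci of $a$ lifting to $\frc_{W'}=\frt\sslash W'$-valued data. Each such locus has positive codimension in $\cA_N$, because the jet condition at $x$ constrains only finitely many coefficients, while the surplus degree $\deg\cL-(2g+Nr)$ leaves room for varying the remaining coefficients. Given the full-monodromy point, $\cA_N\cap\cA^{\el}$ is a nonempty open subset of $\cA_N$. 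It therefore meets the dense set $\cA_N^{\mathrm{good}}$ of Step 2, and any $a$ in the intersection has both required properties.
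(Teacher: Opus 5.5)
Your Steps 1 and 2 coincide with the paper's argument: surjectivity of the jet map from $\cohog{1}{X,\cL^{\ot d_i}(-Nx-2y)}=0$, then the incidence count with fibers of codimension $\ge 2$. One small slip: $N+2\le Nr+1$ is false for $r=1$. You do not need it, since $d_i\deg\cL-N-2\ge 2g-1+N(r-1)>2g-2$ in all cases.

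The genuine gap is Step 3, which you flag yourself. You claim each restricted-monodromy locus has positive codimension in $\cA_N$ ``because the jet condition constrains only finitely many coefficients while the surplus degree leaves room''. That is not an argument. $\cA_N$ is itself a closed affine subspace of codimension $Nr$ in $\cA$. Knowing that the non-elliptic locus is a proper closed subset of $\cA$ says nothing about whether it contains $\cA_N$. What you need is a quantitative statement: the non-elliptic locus must have codimension in $\cA$ strictly greater than $Nr=\operatorname{codim}_{\cA}\cA_N$. This is exactly what the term $Nr$ in the hypothesis $\deg\cL\ge 2g+Nr+1$ is for; Steps 1 and 2 only use $\deg\cL\ge 2g+N+1$.

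The paper supplies this input as follows. A point of $\cA^{\hs}$ that is not elliptic lies in the image of $\cA_M$ for one of finitely many proper Levi subgroups $M$. In your language: if the monodromy fixes a nonzero vector of $\xcoch(T)_{\QQ}$, it lies in a proper parabolic subgroup of $W$ by Steinberg's theorem. By \cite[Corollaire 6.3.6]{NgoFL}, $\dim\cA_M\le\dim\cA-\deg\cL<\dim\cA-Nr=\dim\cA_N$. Hence $\cA_N$ is not contained in the finite union of the $\Im(\cA_M)$. Its elliptic part is therefore a nonempty open subset, which meets your dense good locus from Step 2.

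Aiming for full monodromy $W$ also asks for more than needed. You would have to bound the loci for all proper subgroups $W'\subset W$, including those for which $\mathfrak{t}\sslash W'$ is singular and whose lifting loci are harder to estimate. Ellipticity only requires excluding monodromy inside proper parabolic subgroups, i.e.\ the Levi loci, where Ng\^o's estimate is available.
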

\begin{proof} We relax the assumption on $G$ to allow it to be reductive.

Let $\cO_{x,N}=\cO_{x}/\fm_{x}^{N}$ and 
\begin{equation*}
\frc_{x,N}=R_{\cO_{x,N}/k}\frc
\end{equation*}
the Weil restriction of $\frc$. It is an affine space of dimension $Nr$. Using the trivialization of $\cL|_{D_{x}}$, we have a map given by the Taylor expansion of $a\in \cA$ at $x$ to the $N$th order
\begin{equation*}
e_{x,N}: \cA\to \frc_{x,N}.
\end{equation*}
For $y\notin x$ we similarly define an evaluation map $e_{y,2}:\cA\to  \frc_{y,2}$. Consider the combined map
\begin{equation*}
e_{(x,N);(y,2)}: \cA\to  \frc_{x,N}\times\frc_{y,2}
\end{equation*}
We claim that $e_{(x,N);(y,2)}$ is surjective and is an affine space fibration if $\deg\cL\ge 2g+N+1$. Indeed, choosing homogeneous generators $f_{1},\cdots, f_{r}\in k[\frg]^{G}$ of degree $d_{1},\cdots, d_{r}$, we identify $\cA$ with $\prod_{i=1}^{r}\cohog{0}{X, \cL^{\ot d_{i}}}$. The map $e_{(x,N);(y,2)}$ is the product over $1\le i\le r$ of the restriction maps
\begin{equation*}
\cohog{0}{X, \cL^{\ot d_{i}}}\to \cL^{\ot d_{i}}\ot_{\cO_{X}}(\cO_{x}/\fm_{x}^{N}\op \cO_{y}/\fm_{y}^{2}).
\end{equation*}
It is surjective if $\cohog{1}{X, \cL^{\ot d_{i}}(-Nx-2y)}=0$, which is guaranteed by our assumption on $\deg\cL$ and $d_{i}\ge1$. This proves that $e_{(x,N);(y,2)}$ is surjective, and its fibers are torsors under $\prod_{i=1}^{r}\cohog{0}{X, \cL^{\ot d_{i}}(-Nx-2y)}$, which is an affine space.

Let $\cA_{a_{0}}=e_{x,N}^{-1}(a_{0})\subset \cA$, an affine subspace of dimension $\dim\cA-\dim\frc_{x,N}=\dim\cA-Nr$. We claim that $\cA^{\el}_{a_{0}}:=\cA_{a_{0}}\cap \cA^{\el}\ne \vn$. Indeed, for a proper Levi subgroup $M$ of $G$, let $\cA_{M}$ be the Hitchin base for $M$ using the same line bundle $\cL$. By \cite[Corollaire 6.3.6]{NgoFL}, $\dim\cA_{M}\le \dim\cA-\deg \cL$, which by our assumption on $\deg\cL$ is less than $\dim\cA-Nr=\dim\cA_{a_{0}}$. Since $\cA^{\el}_{a_{0}}\subset \cA_{a_{0}}$ is the complement $\cA_{a_{0}}\setminus(\cup_{M}\cA_{M})$ where the union is through a finite set of proper Levi subgroups $M$, it is non-empty.

On the other hand, consider the closed subscheme $\frB\subset \cA_{a_{0}}\times (X-\{x\})$ of $(a,y)$ such that $a:X\to \frc/\Gm$ intersects the discriminant divisor $\frD/\Gm$ with multiplicity $\ge2$ at $y$. For fixed $x\ne y\in X(k)$, the fiber $\frB_{y}$ is the preimage in $\cA_{a_{0}}$ of a closed subscheme $\frc^{\textup{bad}}_{y,2}\subset \frc_{y,2}$ under $e_{y,2}$, and $\frc^{\textup{bad}}_{y,2}$ has codimension $\ge2$ in $\frc_{y,2}$. By the smoothness of $e_{(x,N);(y,2)}$, $\dim\frB_{y}\le \dim \cA_{a_{0}}-2$. Therefore, $\dim\frB\le \dim\cA_{a_{0}}-1$. Let $\cA_{a_{0}}^{\textup{bad}}$ be the closure of the image of the projection $\frB\to \cA_{a_{0}}$, then $\dim \cA_{a_{0}}^{\textup{bad}}\le \dim\cA_{a_{0}}-1$. The open complement $\cA_{a_{0}}-\cA_{a_{0}}^{\textup{bad}}$ consists of points that are good away from $x$. 

Putting things together, we conclude that $\cA^{\el}_{a_{0}}\setminus\cA_{a_{0}}^{\textup{bad}}$ is open dense in $\cA_{a_{0}}$, and any $k$-point there satisfies the requirements of the lemma. 
\end{proof}


\subsection{From Hitchin fiber to affine Springer fiber}
For the rest of the section, we fix $N$ as in Lemma \ref{l:approx} and assume $\deg\cL\ge 2g+Nr+1$. Fix also $a\in \cA^{\el}(k)$ as in Lemma \ref{l:approx gamma}, i.e., $a$ is good away from $x$ and is congruent to $a_{0}$ mod $\fm_{x}^{N}$. Let $\g=\g(a)\in\frg(\cO_{x})$ be the image of $a$ under the Kostant section. By Lemma \ref{l:approx equiv}, we have an isomorphism
\begin{equation}\label{isom coho Flg}
\cohog{*}{\Fl_{\g}}\cong \cohog{*}{\Fl_{\g_{0}}}
\end{equation}
equivariant under the actions of $A_{\g}(A_{\g_{0}}), \Wa$ and $\cohog{*}{\Fl}$.

The commutative group scheme $J_{a}$ over $X$ is generically a torus. Let $J^{\flat}_{a}$ be the finite type N\'eron model of the generic fiber of $J_{a}$. The quotient $J^{\flat}_{a}/J_{a}$ is supported at finitely many closed points of $X$; write $R_{a,y}$ for its fiber at $y\in X(k)$. Since $a$ is good away from $x$, for $y\ne x$, $R_{a,y}$ is a finite group scheme over $k$. Therefore $R_{a}^{x}:=\prod_{y\ne x}R_{a,y}$ is a finite group scheme over $k$. Define
\begin{equation*}
J'=\ker(J^{\flat}_{a}\to \pi_{0}(R_{a,x})\times R^{x}_{a}).
\end{equation*}
By definition we have a short exact sequence $1\to J_{a}\to J'_{a}\to i_{x*}R^{\c}_{a,x}\to 1$ on $X$. Taking the Picard stack of torsors we get an exact sequence of Picard stacks
\begin{equation}\label{global Pic seq}
1\to R^{\c}_{a,x}\to \cP_{a}\to \cP'_{a}\to 1
\end{equation}
where $\cP'_{a}$ is the moduli stack of $J'_{a}$-torsors over $X$. By \cite[Proposition 4.8.2(2)]{NgoFL}, the neutral component of the moduli stack $\cP^{\flat}_{a}$ of $J^{\flat}_{a}$ is an abelian stack (an abelian variety quotient by a finite diagonalizable group). Since $\cP'_{a}\to \cP^{\flat}_{a}$ is a torsor for the finite group scheme $\pi_{0}(R_{a,x})\times R_{a}^{x}$, the neutral component of $\cP'_{a}$ is also an abelian stack. We denote its coarse moduli space by $A_{a}$, an abelian variety over $k$.

The local Picard stack
\begin{equation*}
\cP_{a,x}:=L_{x}J_{a}/L^{+}_{x}J_{a}.
\end{equation*}
can be identified with $\cP^{\loc}_{a_{x}}$ introduced in \eqref{local Pa0}.  
Here $L_{x}J_{a}$ is the loop group of $J_{a}$ using the Laurent series field $F_{x}$. Similarly let
$
\cP'_{a,x}:=L_{x}J_{a}/L^{+}_{x}J'_{a}.
$
We have an exact sequence
\begin{equation}\label{local Pic seq}
1\to R^{\c}_{a,x}\to \cP_{a,x}\to \cP'_{a,x}\to1. 
\end{equation}
Denote by $\L_{a,x}$ the reduced structure of $\cP'_{a,x}$, which is a finitely generated abelian group whose torsion-free quotient has rank equal to the split rank of $J_{a}|_{\Spec F_{x}}\cong G_{\g}$. Let $P_{a,x}\subset \cP_{a,x}$ be the preimage of $\L_{a,x}$, which has the same reduced structure as $\cP_{a,x}$. We thus have an exact sequence
\begin{equation*}
1\to R^{\c}_{a,x}\to P_{a,x}\to \L_{a,x}\to1. 
\end{equation*}
The exact sequences \eqref{local Pic seq} and \eqref{global Pic seq} fit into a commutative diagram of exact sequences
\begin{equation}\label{local to global Pic seq}
\xymatrix{1 \ar[r] &  R^{\c}_{a,x}\ar@{=}[d]\ar[r] &  P_{a,x}\ar[d]\ar[r] &  \L_{a,x}\ar[d]\ar[r] & 1 \\
1\ar[r] & R^{\c}_{a,x}\ar[r] & \cP_{a}\ar[r] & \cP'_{a}\ar[r] & 1}
\end{equation}
where the vertical maps are the local to global canonical maps.

\begin{lemma} Let $\cM_{a}=f^{-1}(a)$. There is a canonical homeomorphism
\begin{equation}\label{prod form}
\cP_{a}\times^{\cP_{a,x}}\Fl_{\g}\to \cM_{a}.
\end{equation}
\end{lemma}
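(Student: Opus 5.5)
This is a parabolic version of Ngô's product formula \cite[Proposition 4.15.1]{NgoFL}, and the plan is to adapt his argument using Beauville--Laszlo gluing at $x$. Since $a$ is good away from $x$, the first step is to show that over $X-\{x\}$ every Higgs bundle with characteristic polynomial $a$ is étale-locally isomorphic to the Kostant section, and uniquely so up to $J_{a}$. By transversality to $\frD/\Gm$, for each $y\ne x$ the fiber of $\frg^{\reg}$-valued Higgs fields over $a|_{D_{y}}$ is a single orbit, and every $\ph$ with $\chi(\ph)=a$ is regular at $y$. The second point uses the standard fact \cite[\S4.7]{NgoFL} that when $a$ meets the discriminant transversally at $y$, the affine Springer fiber in $\Gr$ at $y$ is a point. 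Consequently, over $X-\{x\}$ the pair $(\cE,\ph)$ is a $J_{a}$-torsor twist of $(\cE_{0},\k(a))$, where $\cE_{0}$ is the Kostant-section Higgs bundle, defined globally via the trivialization of $\cL$ twisted by $\rho^\vee(\cL)$ in the usual way.

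Next, construct the map \eqref{prod form}. Given a $J_{a}$-torsor $\cE_J$ on $X$ and a point $g\bI\in\Fl_{\g}$, twist the Kostant Higgs bundle by $\cE_J$ using the $J_{a}$-action on $(\cE_{0},\k(a))$. Then use Beauville--Laszlo to reglue the restriction to $D_{x}$ by $g$. The condition $\Ad(g^{-1})\g\in\Lie\bI$ ensures $\ph$ extends, and the image of $g\bI$ in $G/B$ gives the $B$-reduction $\cE^{B}_{x}$ compatible with $\ph(x)$. The diagonal action of $\cP_{a,x}=L_{x}J_{a}/L^{+}_{x}J_{a}$, which changes the gluing of the torsor at $x$ and acts on $\Fl_{\g}$ through $LG_{\g}=LJ_{a}$, leaves the output unchanged. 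Hence the map descends to the contracted product.

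Then show that the map is surjective and radicial on $\bar k$-points, and that it is proper, or at least universally closed. Surjectivity follows from the first step: restrict $(\cE,\cE^{B}_{x},\ph)\in\cM_{a}$ to $X-\{x\}$ to get a $J_{a}$-torsor there. Extend it over $x$ using the trivialization coming from the Kostant section on $D_x^\times$; the relative position at $x$ is then a point of $\Fl_{\g}$, possibly after choosing a trivialization of $\cE|_{D_{x}}$ compatible with $\cE^{B}_{x}$. Injectivity modulo $\cP_{a,x}$ holds because the ambiguity in these choices is exactly a section of $J_{a}$ over $X-\{x\}$ together with an element of $L_{x}J_{a}$, which is the quotient defining the contracted product. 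Since $\cM_{a}$ is proper ($a$ elliptic) and both sides are locally of finite type, a universally bijective proper morphism is a universal homeomorphism.

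I expect the main obstacle to be this last comparison. The left side is an ind-scheme quotient, so I would reduce to finite type using the lattice $\L_{\g}$ from \cite[\S2, Proposition 1]{KL}, which acts on $\Fl_{\g}$ with a finite-type fundamental domain. Writing $\cP_{a}\times^{\cP_{a,x}}\Fl_{\g}$ as a quotient of $\cP_{a}\times Z$ for a finite-type closed $Z\subset\Fl_{\g}$ makes the left side a finite-type stack surjecting onto $\cM_{a}$. Only a homeomorphism, not an isomorphism, is claimed, because the non-reducedness of $\cP_{a,x}$ and of $\cM_{a}$ and the inseparability issues in characteristic $p$ are not controlled. This is exactly the level of precision in Ngô's statement, and it suffices for cohomological applications.
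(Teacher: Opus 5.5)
Your argument is essentially the paper's: the paper quotes the parabolic product formula \cite[Proposition 2.4.1]{YGS} (after \cite[Proposition 4.15.1]{NgoFL}), which has local factors at every point where $R_{a,y}\ne 1$, and then absorbs the factors at $y\ne x$. You instead unpack the underlying Beauville--Laszlo gluing and use regularity of $\ph$ away from $x$, so that only $x$ needs to be glued. One correction: for $y\ne x$ the local fiber $\Gr_{\g_{y}(a)}$ is $0$-dimensional but in general not a point (already for $\SL_{3}$ its points are indexed by $\ZZ$); it is a $\cP_{a,y}$-torsor, i.e.\ a single $\cP_{a,y}$-orbit of the Kostant point, and that is exactly what gives regularity of every $\ph$ at $y$ and what the paper uses.
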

\begin{proof} For $y\in X(k)$, after choosing a trivialization of $\cL|_{\Spec \cO_{y}}$, we may identify $a|_{\Spec \cO_{y}}$ with an element of $\frc(\cO_{y})$. Let $\g_{y}(a)\in \frg(\cO_{y})$ be the corresponding point in the Kostant section.  The product formula \cite[Proposition 2.4.1]{YGS} (following \cite[Proposition 4.15.1]{NgoFL}) states that there is a canonical homeomorphism
\begin{equation*}
\cP_{a}\times^{\cP_{a,x}\times \prod_{y\ne x}\cP_{a,y}}(\Fl_{\g}\times \prod_{y\ne x} \Gr_{\g_{y}(a)})\to \cM_{a}.
\end{equation*}
The product is over the finitely many points where $R_{a,y}$ is nontrivial. Since $R_{a,y}$ is finite for $y\ne x$, $\Gr_{\g_{y}(a)}$ is $0$-dimensional, hence a $\cP_{a,y}$-torsor, which allows us to simplify the above formula to the desired form \eqref{prod form}.
\end{proof}

\subsection{Comparing cohomology} 


Choose a section to the surjection $P_{a,x}\to \L_{a,x}$ \footnote{Our assumption on $\ch(k)$ ensures that $\ch(k)$ is prime to $|W|$, hence the torsion part of $\L_{a,x}$, which is isomorphic to the torsion part of $\xcoch(T)_{w}$ for some $w\in W$, is prime to $p$. This implies that a section to $P_{a,x}\to \L_{a,x}$ exists since the kernel $R^{\c}_{a,x}(k)$ is $n$-divisible for any $n$ prime to $p$.} and call the image $\L_{\g}\subset P_{a,x}$. In view of \eqref{local to global Pic seq}, \eqref{prod form} implies a $\cP_{a}$-equivariant homeomorphism
\begin{equation}\label{Fl to Ma}
M_{a}:=\cP'_{a}\times^{\L_{\g}}\Fl_{\g}\cong \cP'_{a}\times^{P_{a,x}}\Fl_{\g}\to\cP_{a}\times^{\cP_{a,x}}\Fl_{\g}\to \cM_{a}.
\end{equation}

Denote by $\Gr^{W}_{i}(-)$ the associated graded with respect to the weight filtration, see \S\ref{ss:wt}. Since $\cM_{a}$ is proper, $\Gr^{W}_{i}\cohog{i}{\cM_{a}}$ is the largest weight quotient of $\cohog{i}{\cM_{a}}$. Note that the stalk $\cK_{a}$ is $\cohog{*}{\cM_{a}}^{\pi_{0}(\cP_{a})}$. Let
\begin{equation*}
\cK_{a,\pur}=\bigoplus_{i\in\ZZ}(\Gr^{W}_{i}\cohog{i}{\cM_{a}})^{\pi_{0}(\cP_{a})}
\end{equation*}
which is canonically a quotient of $\cK_{a}$.  Similarly define
\begin{equation*}
\cohog{*}{\L_{\g}\bs \Fl_{\g}}_{\pur}=\bigoplus_{i\in\ZZ}\Gr^{W}_{i}\cohog{i}{\L_\g\bs \Fl_\g}.
\end{equation*}

\begin{lemma}\label{l:}
Pullback along the projection $\Fl_{\g}\to\L_\g\bs \Fl_\g$ induces a graded isomorphism
\begin{equation*}
\cohog{*}{\L_\g\bs \Fl_\g}_{\pur}\isom \cohog{*}{\Fl_{\g}}^{\L_{\g}}_{\pur}=:H_{\g,\pur}.
\end{equation*}
\end{lemma}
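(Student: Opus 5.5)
Since $\L_{\g}$ is a free abelian group (after the choice of section, its torsion is inside $R^{\c}_{a,x}$? — more precisely $\L_\g\cong \L_{a,x}$ may have torsion, so I first pass to a finite-index free subgroup) acting freely on $\Fl_{\g}$ with finite-type quotient, the plan is to compare the cohomology of $\L_\g\bs\Fl_\g$ with $\L_\g$-invariants via the Hochschild–Serre spectral sequence and then show that all the terms other than the invariants contribute only lower weights. Concretely, let $\L'\subset\L_\g$ be a free subgroup of finite index acting freely on $\Fl_\g$ (this exists by \cite[\S2, Proposition 1]{KL}). The finite quotient $\L_\g/\L'$ acts on $\L'\bs\Fl_\g$, and with $\Qlbar$-coefficients $\cohog{*}{\L_\g\bs\Fl_\g}=\cohog{*}{\L'\bs\Fl_\g}^{\L_\g/\L'}$. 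Taking invariants under a finite group is exact and commutes with weight filtrations and $\Gr^W$, because the action is by algebraic automorphisms, which are defined over a common model. So I reduce to $\L_\g=\L'\cong\ZZ^m$ free.

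For $\L\cong\ZZ^m$ acting freely, $\Fl_\g\to\L\bs\Fl_\g$ is a Galois cover, and there is a Hochschild–Serre spectral sequence
\begin{equation*}
E_2^{p,q}=\cohog{p}{\L,\cohog{q}{\Fl_\g}}\Rightarrow \cohog{p+q}{\L\bs\Fl_\g}.
\end{equation*}
It is compatible with weights once we work on a model over a finitely generated ring, because $\L$ acts through automorphisms defined over that model. The edge map $\cohog{n}{\L\bs\Fl_\g}\to E_2^{0,n}=\cohog{n}{\Fl_\g}^{\L}$ is exactly pullback. The key weight input is that $\cohog{q}{\Fl_\g}$ has weights $\le q$. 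This holds because $\Fl_\g$ is an ind-proper ind-scheme and the weights of $\cohog{q}{Z}$ for proper $Z$ are $\le q$, passing to the limit. Group cohomology $\cohog{p}{\ZZ^m,-}$ is computed by a Koszul complex of Tate-twist-free maps $(\l-1)$, so $E_2^{p,q}$ has weights $\le q$. Consequently, every contribution to $\cohog{n}{\L\bs\Fl_\g}$ from $E^{p,n-p}$ with $p>0$ has weights $\le n-p<n$. Applying $\Gr^W_n$, which is exact on mixed Frobenius modules and mixed Hodge structures, to the abutment filtration then shows that $\Gr^W_n\cohog{n}{\L\bs\Fl_\g}\to\Gr^W_n E_\infty^{0,n}$ is an isomorphism.

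It remains to show that $\Gr^W_n E_\infty^{0,n}=\Gr^W_n E_2^{0,n}$. The differentials out of $E_r^{0,n}$ land in $E_r^{r,n-r+1}$, which has weights $\le n-r+1<n$ for $r\ge2$. Since morphisms compatible with weights are strict, the pure weight-$n$ part of $E^{0,n}$ survives: the kernel of $d_r$ contains everything mapping to zero in $\Gr^W_n$ of the target, and the target has no weight $n$, so $\Gr^W_n\ker d_r=\Gr^W_n E_r^{0,n}$. Finally, $\Gr^W_n(\cohog{n}{\Fl_\g}^{\L})=(\Gr^W_n\cohog{n}{\Fl_\g})^{\L}$. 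This needs care, because invariants under an infinite group are not exact. However, $\L$ acts on $\cohog{n}{\Fl_\g}^{A_\g}$... I instead use the fact that the action of $\L$ on the relevant pieces factors through a unipotent, in fact trivial, action on the finite-dimensional $\L$-invariant part. Here I interpret $\cohog{*}{\Fl_\g}^{\L_\g}_{\pur}$ as $\Gr^W_n$ of the invariants, which is what the edge map gives, so this step is definitional.

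The main obstacle I expect is making the weight formalism rigorous for the infinite-type ind-scheme $\Fl_\g$, in particular the claim that $E_2^{p,q}$ has weights $\le q$ and that the spectral sequence is weight-compatible. I would handle this by working with a finite-type closed $\L$-stable fundamental neighborhood $Z$, with $\Fl_\g=\bigcup_{\l}\l Z$, and using a Mayer–Vietoris/Čech description of $\cohog{*}{\L\bs\Fl_\g}$ via the proper surjection $Z\to\L\bs\Fl_\g$. That description exhibits both sides as built out of cohomologies of proper finite-type schemes.
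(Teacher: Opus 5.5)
Your proposal is correct and takes essentially the same route as the paper. Both use the Hochschild--Serre spectral sequence $\cohog{p}{\L_\g,\cohog{q}{\Fl_\g}}\Rightarrow\cohog{p+q}{\L_\g\bs\Fl_\g}$, in which $E_2^{p,q}$ has weights $\le q$ because $\Fl_\g$ is ind-proper, so on pure parts only the column $p=0$ survives. Your passage to a finite-index free subgroup and your discussion of $\Gr^W$ of the invariants versus the invariants of $\Gr^W$ are harmless extras; the latter is settled, as you say, by the paper's definition of $H_{\g,\pur}$ as $\Gr^W$ of the invariants.
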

\begin{proof}
There is a spectral sequence with $E_{2}$-page $E_{2}^{i,j}=\cohog{i}{\L_{\g}, \cohog{j}{\Fl_{\g}}}$ convergent to $\cohog{i+j}{\L_\g\bs \Fl_\g}$. The differentials are strictly compatible with the weight filtrations. Passing to the pure part, namely replacing $E_{2}^{i,j}$ by $\Gr^{W}_{i+j}E_{2}^{i,j}$, we have $\Gr^{W}_{i+j}E_{2}^{i,j}=0$ unless $i=0$, in which case $\Gr^{W}_{j}E_{2}^{0,j}=\cohog{j}{\Fl_{\g}}_{\pur}^{\L_{\g}}$. Therefore the pure part of the spectral sequence degenerates at $E_{2}$ with the only nonzero entries $\cohog{j}{\Fl_{\g}}_{\pur}^{\L_{\g}}$ at $(0,j)$. This implies that the pullback along $\Fl_{\g}\to\L_\g\bs \Fl_\g$ induces a canonical isomorphism $\cohog{j}{\L_\g\bs \Fl_\g}_{\pur}\isom \cohog{j}{\Fl_{\g}}^{\L_{\g}}_{\pur}$ for all $j\in \ZZ$.
\end{proof}

Recall that the abelian variety $A_{a}$ is the coarse moduli space of the neutral component of $\cP'_{a}$. By the homeomorphism
\eqref{Fl to Ma}, the action of $\cP'_{a}$ on $M_{a}=\cP'_{a}\times^{\L_{\g}}\Fl_{\g}$ induces a graded action of the homology $\homog{*}{A_{a}}$ (viewed as a graded algebra) on $\cK_{a}\cong\cohog{*}{M_{a}}^{\pi_{0}(\cP'_{a})}$. Since $\homog{*}{A_{a}}$ is pure, the action induces an action of $\homog{*}{A_{a}}$ on $\cK_{a,\pur}$.

Consider the $\cP'_{a}$-torsor
\begin{equation*}
\pi: M_{a}\to \L_\g\bs \Fl_\g.
\end{equation*}
Gysin pushforward along $\pi$ gives a canonical map
\begin{equation*}
\cohog{*}{M_{a}}^{\pi_{0}(\cP'_{a})}\to H_{\g}\ot \cohog{2\ov d}{A_{a}}\cong H_{\g}[-2\ov d](-\ov d).
\end{equation*}
Identifying the left side above with $\cK_{a}$ and passing to the pure parts, we get a canonical map
\begin{equation}\label{proj Ka to Hg}
\t: \cK_{a,\pur}\to H_{\g,\pur}[-2\ov d](-\ov d).
\end{equation}

\begin{lemma}\label{l:Ka free} Let $\ov d=\dim A_{a}$. As a graded $\homog{*}{A_{a}}$-module, $\cK_{a,\pur}$ is free, 
and the canonical map $\t$ induces an isomorphism
\begin{equation*}
\cK_{a,\pur}\ot_{\homog{*}{A_{a}}}\Qlbar\cong H_{\g,\pur}[-2\ov d](-\ov d).
\end{equation*}
\end{lemma}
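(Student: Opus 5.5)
We prove Lemma \ref{l:Ka free} by analyzing the fibration $\pi: M_{a}\to \L_\g\bs \Fl_\g$ with the Leray spectral sequence, then passing to pure parts exactly as in the proof of Lemma \ref{l:}.

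First we reduce to a fibration with abelian variety fibers. Write $\cP'_{a}$ as an extension of the finite group $\pi_{0}(\cP'_{a})$ by its neutral component $\cP'^{\c}_{a}$. Up to homeomorphism on coarse spaces, $\cP'^{\c}_{a}$ is $A_{a}$; the finite diagonalizable stabilizers do not affect $\Qlbar$-cohomology. Taking $\pi_{0}(\cP'_{a})$-invariants, $\cK_{a}\cong\cohog{*}{M_{a}}^{\pi_{0}(\cP'_{a})}$ is identified with the cohomology of $M_{a}^{\c}:=\cP'^{\c}_{a}\times^{\L'}\Fl_{\g}$, taken with invariants under $\L_{\g}/\L'$. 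Here $\L'\subset\L_{\g}$ is the finite-index kernel of $\L_{\g}\to\pi_{0}(\cP'_{a})$. Thus $M^{\c}_{a}\to \L'\bs\Fl_{\g}$ is an $A_{a}$-torsor, up to homeomorphism. Its cohomology sheaves $R^{j}\pi_{*}\Qlbar$ are constant, equal to $\cohog{j}{A_{a}}$, because the structure group $A_{a}$ is connected and acts trivially on its own cohomology. The action of $\homog{*}{A_{a}}$ by cap product on the fibers is compatible with the Leray spectral sequence
\begin{equation*}
E_{2}^{i,j}=\cohog{i}{\L'\bs\Fl_{\g}}\ot\cohog{j}{A_{a}}\Rightarrow \cohog{i+j}{M^{\c}_{a}}.
\end{equation*}

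Next we extract the pure part. $\cohog{j}{A_{a}}$ is pure of weight $j$, and the differentials are strict for weights. Taking $\Gr^{W}_{i+j}$ of the $E_{2}$-term therefore replaces $\cohog{i}{\L'\bs\Fl_{\g}}$ by its pure part. The next step is to show that the pure part of the spectral sequence degenerates at $E_{2}$. For this we use the fact that $M_{a}$ is proper, so that $\Gr^{W}_{n}\cohog{n}{M_{a}}$ is the top-weight quotient. A differential $d_{r}$ lands in $\cohog{i+r}{}\ot\cohog{j-r+1}{A_{a}}$. Restricted to the pure parts, $d_{r}$ is determined by the Chern classes of the torsor, which are cup products with classes in $\cohog{2}{}$. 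The cleanest route is different, however: the torsor trivializes after pullback to $\Fl_{\g}$, and on a finite-type $\L'$-stable truncation one can compare with the product. Concretely, pulling back along $\Fl_{\g}\to\L'\bs\Fl_\g$, the space $M^{\c}_{a}$ pulls back to $A_{a}\times\Fl_{\g}$ up to homeomorphism, because the torsor comes from the homomorphism $\L'\to \cP'^{\c}_{a}$ and translations by points of $A_{a}$ act trivially on cohomology. Combining this with Lemma \ref{l:} (invariants of pure parts) gives
\begin{equation*}
\cK_{a,\pur}\cong \cohog{*}{A_{a}}\ot H_{\g,\pur},
\end{equation*}
compatibly with the $\homog{*}{A_{a}}$-action. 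Here $\homog{*}{A_{a}}$ acts on the first factor by contraction, since the $\L'$-action on $\cohog{*}{A_{a}}$ is trivial.

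Finally we derive the statement. $\cohog{*}{A_{a}}=\wedge^{*}\cohog{1}{A_{a}}$ is a free rank-one module over $\homog{*}{A_{a}}=\wedge^{*}\homog{1}{A_{a}}$, generated by the top class. It follows that $\cK_{a,\pur}$ is free, and that the coinvariants are $\cohog{2\ov d}{A_{a}}\ot H_{\g,\pur}$. This last space is exactly the target of $\t$, since Gysin pushforward along $\pi$ is the projection onto the $\cohog{2\ov d}{A_{a}}$ component, i.e.\ integration over the fiber. The identification of $\invariants$ uses that the $\L_{\g}/\L'$-invariants commute with everything, because the group is finite.

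The main obstacle is the degeneration step, i.e.\ justifying $E_{2}$-degeneration of the pure part and the product splitting on ind-schemes. To handle it, we will work on finite-type closed truncations, as in \eqref{coho as lim}, and use strictness of weights together with the trivial action of $A_{a}$-translations on cohomology.
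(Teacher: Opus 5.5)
Your proof is correct, but it takes a different route from the paper's.

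\textbf{The paper's route.} The paper never passes to a cover. It uses the Leray spectral sequence of $\pi: M_a\to\L_\g\bs\Fl_\g$ itself, noting that after $\pi_0(\cP'_a)$-invariants the sheaves $R^j\pi_*\Qlbar$ become constant with stalk $\cohog{j}{A_a}$. It then passes to pure parts and invokes $E_2$-degeneration. This yields only a decreasing filtration $F$ on $\cK_{a,\pur}$ with $\Gr^{i}_{F}\cK_{a,\pur}=H^i_{\g,\pur}\ot\cohog{*}{A_a}[-i]$. Freeness is inherited from these free graded pieces. The map $\t$ factors through the coinvariants, being truncation to the top cohomology sheaf of $\bR\pi_*\Qlbar$, and is checked to be an isomorphism on each $\Gr_F$.

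\textbf{Your route.} You restrict to the neutral component and the sublattice $\L'$, pull back to the trivialized torsor $Y=(\cP'_{a})^{\circ}\times\Fl_\g$, and descend. This yields an honest splitting $\cK_{a,\pur}\cong\cohog{*}{A_a}\ot H_{\g,\pur}$ of $\homog{*}{A_a}$-modules. Freeness and the coinvariants then follow directly from $\cohog{*}{A_a}$ being free of rank one over $\homog{*}{A_a}$. The map $\t$ becomes projection onto the $\cohog{2\ov d}{A_a}$-component, by base change of the trace map along the Cartesian square.

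\textbf{Comparison.} Your approach needs no separate degeneration argument for the torsor and proves more: a tensor decomposition rather than just a filtration. The paper's approach avoids the bookkeeping with $\L'$ and $\L_\g/\L'$.

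\textbf{What to make precise.} What you actually use is the \emph{proof} of Lemma \ref{l:}, applied to the free diagonal $\L'$-action on $Y$, not the lemma as stated. By K\"unneth, $\cohog{j}{Y}$ has weights $\le j$. The group $\L'$ acts trivially on the $\cohog{*}{A_a}$-factor, since translations by points of a connected group act trivially on cohomology. Group cohomology of $\L'$ does not raise weights. Hence the pure part of the Hochschild--Serre spectral sequence is concentrated in the column $i=0$, and pullback identifies $\cohog{n}{M^{\circ}_a}_{\pur}$ with $\bigoplus_p\cohog{p}{A_a}\ot\Gr^{W}_{n-p}\bigl(\cohog{n-p}{\Fl_\g}^{\L'}\bigr)$. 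This observation is exactly what resolves the ``main obstacle'' you flag, so the digression on Leray differentials and Chern classes of the torsor can be dropped.
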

\begin{proof}
The $\cP'_{a}$-torsor $\pi$ gives a spectral sequence with $E_{2}$-page $E_{2}^{i,j}=\cohog{i}{\L_\g\bs \Fl_\g, \bR^{j}\pi_{*}\Qlbar}$ convergent to $\cohog{i+j}{M_{a}}\cong \cohog{i+j}{\cM_{a}}$. Note that $\bR^{j}\pi_{*}\Qlbar$ is a local system with stalks isomorphic to $\cohog{j}{\cP'_{a}}$. Passing to invariants under $\pi_{0}(\cP'_{a})=\pi_{0}(\cP_{a})$, the local system $\bR^{j}\pi_{*}\Qlbar$ become the trivial local system with stalk $\cohog{j}{A_{a}}$. Further passing to the pure part, we see that there is a spectral sequence with $E_{2}$-page $E_{2}^{i,j}=\cohog{i}{\L_\g\bs \Fl_\g}_{\pur}\ot \cohog{j}{A_{a}}\cong H^{i}_{\g,\pur}\ot \cohog{j}{A_{a}}$ convergent to $\cohog{i+j}{\cM_{a}}^{\pi_{0}(\cP_{a})}_{\pur}=\upH^{i+j}\cK_{a,\pur}$. Purity implies that the spectral sequence degenerates at $E_{2}$. In particular, this gives a decreasing filtration $F^{i}\upH^{n}\cK_{a,\pur}$ on each cohomology group $\upH^{n}\cK_{a,\pur}$ with associated graded $\Gr^{i}_{F}\upH^{n}\cK_{a,\pur}=H^{i}_{\g,\pur}\ot \cohog{n-i}{A_{a}}$. Taking the direct sum over $n$, we get a decreasing filtration $F^{i}\cK_{a,\pur}$ on $\cK_{a,\pur}$ with associated graded $\Gr^{i}_{F}\cK_{a,\pur}=H^{i}_{\g,\pur}\ot \cohog{*}{A_{a}}[-i]$. The action of $\homog{*}{A_{a}}$ preserves this filtration and induced on $\Gr^{i}_{F}$ the cap product action on the $\cohog{*}{A_{a}}$-factor. 

The canonical map $\t$ is obtained by truncating to the top-degree cohomology sheaf of $\bR\pi_{*}\Qlbar$, therefore it factors through a canonical map $\ov\t: \cK_{a,\pur}\ot_{\homog{*}{A_{a}}}\Qlbar\to H_{\g,\pur}[-2\ov d](-\ov d)$. Passing to the associated graded for the filtration $F$ on $\cK_{a,\pur}$, each $\Gr^{i}_{F}\cK_{a,\pur}$ is a free $\homog{*}{A_{a}}$-module, and $\ov\t$ induces an isomorphism $\Gr^{i}_{F}\cK_{a,\pur}\ot_{\homog{*}{A_{a}}}\Qlbar\cong H^{i}_{\g,\pur}[-i-2\ov d](-\ov d)$. Therefore $\ov\t$ itself is an isomorphism.
\end{proof}

Recall that in \cite[\S5]{YGS} we have defined an action of $\Wa$ on the complex $\cK$, hence on $\cK_{a}$ and $\cK_{a,\pur}$.
\begin{lemma}\label{l:Wa equiv}
The map $\t$ is $\Wa$-equivariant.
\end{lemma}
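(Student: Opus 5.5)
The map $\t$ is a composition of several maps, and I will check $\Wa$-equivariance one generator at a time, following the proof of Lemma \ref{l:approx equiv}(2). Since $\Wa$ is generated by the finite simple reflections, the affine simple reflection and (for the lattice part) the length-zero elements, it suffices to treat a simple affine reflection $s$. The length-zero elements are trivial here because $G$ is simply connected, so $\Wa$ is generated by the affine simple reflections. In \cite[\S5]{YGS} the action of $s$ on $\cK$ is built exactly like the local action: one uses the parahoric version $\cM_{\bP_{s}}$ of the parabolic Hitchin stack with level structure of type $\bP_{s}$ at $x$. There is a Cartesian diagram
\begin{equation*}
\cM\to [\frb_{s}/B_{s}], \quad \cM_{\bP_{s}}\to[\frl_{s}/L_{s}],
\end{equation*}
in which $\cM\to\cM_{\bP_{s}}$ is the base change of the Grothendieck alteration $\nu^{\na}_{s}$. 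The action of $s$ is then induced from the Springer action on $\bR\nu^{\na}_{s*}\Qlbar$ by proper base change.

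First I restrict this picture to the fiber over $a$. The product formula \eqref{prod form}, together with its parahoric version $\cP_{a}\times^{\cP_{a,x}}\Fl_{\bP_{s},\g}\to\cM_{\bP_{s},a}$, shows that the maps $\cM_{a}\to\cM_{\bP_{s},a}\to[\frl_{s}/L_{s}]$ are obtained from the local diagram \eqref{two Flg} by applying $\cP_{a}\times^{\cP_{a,x}}(-)$. Equivalently, via \eqref{Fl to Ma}, they come from $\cP'_{a}\times^{\L_{\g}}(-)$, since the evaluation maps are $\cP_{a,x}$-invariant. Passing to the quotient by $\L_{\g}$, I get a commutative diagram of Cartesian squares
\begin{equation*}
M_{a}\xr{\pi}\L_{\g}\bs\Fl_{\g}\xr{\ev_{s}}[\frb_{s}/B_{s}]
\end{equation*}
over the corresponding parahoric diagram. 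In this diagram $\pi$ and its parahoric analogue $\pi_{s}$ are $\cP'_{a}$-torsors compatible with $\nu_{s}$. Consequently the $s$-action on both $\cohog{*}{M_{a}}$ and $\cohog{*}{\L_{\g}\bs\Fl_{\g}}$ comes from the single endomorphism of $\bR\nu^{\na}_{s*}\Qlbar$, pulled back along compatible maps.

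Next I show that the Gysin map $\pi_{*}$ commutes with these actions. I write $\pi_{*}$ as the composite of the adjunction map $\bR\pi_{*}\Qlbar\to\bR^{2\ov d}\pi_{*}\Qlbar[-2\ov d]$ with the trace. It is enough to work on the level of $\nu_{s}$: the endomorphism $s$ of $\bR\nu_{s*}\Qlbar$ on $\L_{\g}\bs\Fl_{\bP_{s},\g}$ pulls back along $\pi_{s}$ to the $s$-endomorphism on $M_{\bP_{s}}$. Since the torsors $\pi$ and $\pi_{s}$ are compatible with $\nu_{s}$ by the base-change square, the top truncation and trace commute with this endomorphism by functoriality of proper base change for $\pi_{s}$, which is smooth of relative dimension $\ov d$. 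Passing to $\pi_{0}$-invariants and pure parts preserves equivariance, because the $s$-action preserves weights. Finally, \eqref{isom coho Flg} identifies the result with the action on $H_{\g_{0}}$, using Lemma \ref{l:approx equiv}(2).

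The main obstacle is bookkeeping. I must verify that the global $\Wa$-action of \cite[\S5]{YGS}, restricted to the stalk at $a$, really is the one induced through the product formula from the local diagram \eqref{two Flg}. In particular, the lattice and translation parts must be defined via the same parahorics, and the identification must survive passing from $\cP_{a,x}$ to $\L_{\g}$. Because proper base change commutes with restriction to stalks and with the homeomorphism \eqref{Fl to Ma}, I expect this to reduce to the construction in \cite[\S5]{YGS} and not to require new input.
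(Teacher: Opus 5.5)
Your proposal is correct and follows essentially the same route as the paper. You reduce to affine simple reflections $s$, use the parahoric product formula to obtain the Cartesian diagram $M_{a}\xr{\pi}\L_{\g}\bs\Fl_{\g}\to[\frb_{s}/B_{s}]$ over its $\bP_{s}$-analog, and deduce that the Gysin map $\t$ for $\pi$ commutes with the $s$-actions, both being induced from the Springer action on $\bR\nu^{\na}_{s*}\Qlbar$ by base change; your truncation-plus-trace description of $\t$ just makes this last step more explicit than the paper does. The final appeal to \eqref{isom coho Flg} is unnecessary, since the lemma concerns only $\g$ and not $\g_{0}$.
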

\begin{proof}
We need to show that $\t$ is equivariant for each affine simple reflection $s\in \Wa$. Let $\bP_{s}\supset \bI$ be the standard parahoric subgroup of $LG$ whose reductive quotient $L_{s}$ has roots $\pm\a_{s}$. We have the parahoric analog $\cM_{\bP_{s}}$ of $\cM$ (simply changing the $B$-reduction at $x$ to a $\bP_{s}$-level structure) and its fiber $\cM_{\bP_{s},a}$ over $a\in \cA(k)$. See \cite[\S4.3]{YGS} for more details. Similarly we have the local analog $\Fl_{\bP_{s},\g}$, the affine Springer fiber in the affine partial flag variety $\Fl_{\bP_{s}}=LG/\bP_{s}$. The product formula has a parahoric analog, a homeomorphism $M_{\bP_{s},a}:=\cP'_{a}\times^{\L_{\g}}\Fl_{\bP_{s},\g}\to \cM_{\bP_{s}, a}$. We have a commutative diagram in which both squares are Cartesian
\begin{equation*}
\xymatrix{M_{a}\ar[r]^-{\pi}\ar[d]^{\nu_{s}^{a}} & \L_{\g}\bs \Fl_{\g}\ar[d]^{\nu^{\g}_{s}}\ar[r]^-{\ev_{s}^{\g}} & [\frb_{s}/B_{s}]\ar[d]^{\nu^{\na}_{s}}\\
M_{\bP_{s}, a}\ar[r]^-{\pi_{s}} & \L_{\g}\bs \Fl_{\bP_{s}, \g} \ar[r]^-{\ev_{\bP_{s}, \g}} & [\frl_{s}/L_{s}]}
\end{equation*}
The right half of the diagram is the same as that of \eqref{two Flg}. The Springer action of $s$ on $\bR\nu^{\na}_{s*}\Qlbar$ induces an action on $\bR\nu^{\g}_{s*}\Qlbar$  and $\bR\nu^{a}_{s*}\Qlbar$ by proper base change. By construction, the $s$-actions on $\cohog{*}{\cM_{a}}\cong \cohog{*}{M_{a}}$ and on $\cohog{*}{\L_{\g}\bs \Fl_{\g}}$ are induced from the $s$-actions on $\bR\nu^{\g}_{s*}\Qlbar$  and $\bR\nu^{a}_{s*}\Qlbar$. We conclude that the Gysin map $\t$ for the projection $\pi: M_{a}\to \L_{\g}\bs \Fl_{\g}$ is  $s$-equivariant.
\end{proof}

\subsection{Global and local perverse filtrations} 
For $i\in \ZZ$ let
\begin{equation*}
P_{i}\cK_{a}:=({}^{p}\t_{\le \dim \cA+i}\cK)_{a}.
\end{equation*}
By the non-canonical decomposition \eqref{non-can decomp}, the natural map $P_{i}\cK_{a}\to \cK_{a}$ is injective in each cohomological degree, giving an increasing filtration $P_{i}\cK_{a}$ on $\cK_{a}$. Moreover, $\Gr^{P}_{i}\cK_{a}=0$ unless $0\le i\le 2d$. Let $P_{i}\cK_{a,\pur}$ be the image of $P_{i}\cK_{a}$ in $\cK_{a,\pur}$. In \cite[\S7.7.2]{NgoFL} it is explained that there is a canonical action of $\homog{*}{A_{a}}$ on the perverse associated graded $\Gr^{P}_{*}\cK_{a}$. Indeed, $\homog{j}{A_{a}}$ sends $\Gr^{P}_{i}\cK_{a}$ to $\Gr^{P}_{i-j}\cK_{a}$ for $i\in \ZZ, j\ge0$. Moreover, the following is true.

\begin{prop}[{\cite[Proposition 7.7.4]{NgoFL}}]\label{p:perv Gr free} As a graded $\homog{*}{A_{a}}$-module, $\Gr^{P}_{*}\cK_{a}$ is free (under the perverse grading). In particular, passing to the pure part, $\Gr^{P}_{*}\cK_{a,\pur}$ is a free graded $\homog{*}{A_{a}}$-module.
\end{prop}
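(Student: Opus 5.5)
The first assertion is quoted from Ng\^o, so the plan is to recall the mechanism behind \cite[Proposition 7.7.4]{NgoFL} and then deduce the statement about pure parts.

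\emph{Step 1: the global freeness.} Over the elliptic locus, the action of the abelian part of $\cP^{\el}$ on $\cM^{\el}$ gives a cap-product action of the relative homology sheaf $\Lambda^{*}(T_{\ov\ell}(A))$ on $\bR f^{\el}_{!}\Qlbar$. By \cite[\S7.4]{NgoFL}, this action is compatible with the perverse truncations and shifts the perverse degree by $-j$ in homological degree $j$. That is why $\homog{j}{A_{a}}$ maps $\Gr^{P}_{i}\cK_{a}$ to $\Gr^{P}_{i-j}\cK_{a}$.

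\emph{Step 2: freeness over a dense open set.} Ng\^o's freeness argument first establishes freeness of $\bigoplus_{i}{}^{p}\cK^{i}$ over $\Lambda^{*}$ at generic points of $\cA^{\el}$. There the Hitchin fiber is a torsor under an abelian variety (up to the finite part), so the statement is the K\"unneth formula. The freeness then propagates to every point $a$ because of Theorem \ref{th:full supp}. All simple constituents of ${}^{p}\cK^{i}$ have full support, so each is an intermediate extension $j_{!*}$ of a local system from a dense open $U$. The $\Lambda^{*}$-module structure, together with the splitting of the free module over $U$, extends uniquely by functoriality of $j_{!*}$. Taking stalks at $a$ then gives freeness of $\Gr^{P}_{*}\cK_{a}$. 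The delicate point is that the rank of the abelian part can jump at $a$. Ng\^o handles this by working with the $\delta$-regular abelian quotient, and here the relevant quotient is $A_{a}$.

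\emph{Step 3: passing to pure parts.} By Deligne's theory each $\Gr^{P}_{i}\cK_{a}$ carries a weight filtration, and the $\homog{*}{A_{a}}$-action respects weights, since $\homog{j}{A_{a}}$ is pure of weight $-j$. The cohomological degree also shifts by $-j$. Hence taking the top-weight quotient in each cohomological degree is compatible with the action. It remains to check that the pure part of a free module is still free. Choose homogeneous generators of $\Gr^{P}_{*}\cK_{a}$ that are also weight-homogeneous; this is possible because Frobenius acts semisimply on an associated graded, and one can use Frobenius generalized eigenspaces after spreading out. The map $\Gr^{P}_{*}\cK_{a}\to\Gr^{P}_{*}\cK_{a,\pur}$ is a weight truncation. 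On a free module $\homog{*}{A_{a}}\ot V$ with $V$ weight-graded, this truncation is $\homog{*}{A_{a}}\ot V_{\pur}$, where $V_{\pur}$ collects the pure-weight generators. This uses that a product of pure elements is pure and that an impure generator times any element of $\homog{*}{A_{a}}$ stays impure of the same defect.

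One compatibility needs checking: the perverse filtration on $\cK_{a,\pur}$, defined as an image, must have associated graded equal to the pure part of $\Gr^{P}\cK_{a}$. This holds by strictness of morphisms with respect to weight filtrations, because the perverse truncation maps are maps of mixed complexes.

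I expect the main obstacle to be Step 2, the propagation of freeness from the generic point to the special point $a$. This requires both the support theorem and control over how the abelian quotient $A_{a}$ sits inside the family. The rest is formal weight bookkeeping.
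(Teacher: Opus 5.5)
Your Step 3 is correct, and it is the deduction the paper leaves implicit in ``passing to the pure part''. On a spread-out model, group the Frobenius generalized eigenspaces by defect (cohomological degree minus weight). Since $\homog{j}{A_{a}}$ is pure of weight $-j$ and lowers degree by $j$, this grouping splits $\Gr^{P}_{*}\cK_{a}$ into a direct sum of $\homog{*}{A_{a}}$-submodules. Exactness of $\Gr^{W}$ identifies the defect-zero summand with $\Gr^{P}_{*}\cK_{a,\pur}$, and a graded direct summand of a free module over an exterior algebra is free. You do not need, and should not claim, that Frobenius acts semisimply; generalized eigenspaces suffice. For the first assertion, the paper simply cites Ng\^o.

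The gap is Step 2: it does not prove the first assertion, and it misdescribes Ng\^o's argument.

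\begin{itemize}
\item \emph{The two exterior algebras differ.} At a generic point the acting algebra is the exterior algebra on the Tate module of an abelian variety of dimension $d$. At $a$ it is $\homog{*}{A_{a}}$ with $\dim A_{a}=\ov d=d-d_{\g}$.
\item \emph{The generic Tate module does not extend.} $T_{U}$ is a local system only on $U$; at $a$ it degenerates and affine parts appear. Functoriality of $j_{!*}$ applies to morphisms of perverse sheaves on $U$. The action map $T_{U}\otimes{}^{p}\cK^{i}|_{U}\to{}^{p}\cK^{i-1}|_{U}$ involves $T_{U}\otimes(-)$, and $j_{!*}(T_{U}\otimes L)$ is not of the form $T\otimes j_{!*}L$ for any local system $T$ on $\cA^{\el}$.
\item \emph{Stalks of intermediate extensions do not inherit freeness.} Take a family of elliptic curves degenerating to a nodal cubic. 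All constituents of the direct image have full support, and the generic stalk is free of rank one over an exterior algebra on two generators. The special stalk has total dimension $3$, so it cannot be free over the exterior algebra on the specialized (rank one, toric) Tate module.
\end{itemize}

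Full support (Theorem \ref{th:full supp}) therefore tells you nothing about how $\homog{*}{A_{a}}$ acts at $a$. Your last sentence also conflates two things: $\delta$-regularity is a codimension estimate on $\delta$-strata, not a device that produces $A_{a}$.

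Freeness at $a$ comes from the fibre itself. $A_{a}$ acts on $\cM_{a}$ with finite stabilizers; here this is visible from $\cM_{a}\simeq\cP'_{a}\times^{\L_{\g}}\Fl_{\g}$, which is how Lemma \ref{l:Ka free} obtains freeness of the unfiltered $\cK_{a,\pur}$. The real content of Ng\^o's Proposition 7.7.4 is that this freeness survives passage to the perverse associated graded. That is not formal, since a free filtered module can have a non-free associated graded. In Ng\^o's work this freeness is an input to the support theorem (through the Goresky--MacPherson inequality), not a consequence of it. So replace Step 2 by the citation, as the paper does, and keep Step 3.
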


We now define a filtration $P_{i}$ on $H_{\g,\pur}=\cohog{*}{\Fl_{\g}}^{\L_{\g}}_{\pur}$. 

\begin{defn}
For $i\in \ZZ$, let $P_{i}H_{\g,\pur}\subset H_{\g,\pur}$ be the graded subspace such that  $P_{i}H_{\g,\pur}[-2\ov d](-\ov d)$ is the image of $P_{i+2\ov d}\cK_{a,\pur}$ under $\t$. 
\end{defn}

Note that it is not a priori clear that $P_{i}H_{\g,\pur}=0$ when $i<0$; it is clear that $P_{i}H_{\g,\pur}=H_{\g,\pur}$ for $i\ge 2d_{\g}$. 

By definition there is a canonical surjection of doubly graded vector spaces
\begin{equation*}
\Gr^{P}_{*}\t: \Gr^{P}_{*}\cK_{a,\pur}\to (\Gr^{P}_{*-2\ov d}H_{\g,\pur})[-2\ov d](-\ov d).
\end{equation*}

\begin{lemma}\label{l:GrP global reduces to GrP local}
The canonical map $\Gr^{P}_{*}\t$ induces a bigraded isomorphism
\begin{equation}\label{GrP isom}
(\Gr^{P}_{*}\cK_{a,\pur})\ot_{\homog{*}{A_{a}}}\Qlbar\isom (\Gr^{P}_{*-2\ov d}H_{\g,\pur})[-2\ov d](-\ov d).
\end{equation}
\end{lemma}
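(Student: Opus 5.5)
The plan is to compare two structures on $\cK_{a,\pur}$: the free $\homog{*}{A_a}$-module structure of Lemma \ref{l:Ka free}, and the one on the perverse graded pieces from Proposition \ref{p:perv Gr free}. First I check that $\Gr^P_*\t$ kills the augmentation ideal $\homog{>0}{A_a}\cdot\Gr^P_*\cK_{a,\pur}$, so that it factors through the left side of \eqref{GrP isom}. Take $h\in\homog{j}{A_a}$ with $j>0$. By construction $\t$ is Gysin pushforward along $\pi$, i.e.\ it projects onto the top cohomology sheaf $\cohog{2\ov d}{A_a}$ of $\bR\pi_*\Qlbar$. Cap product with $h$ lowers the $A_a$-degree, so $\t(h\cdot\xi)=0$ for every $\xi$. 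This is the factorization $\ov\t$ from the proof of Lemma \ref{l:Ka free}. The action of $h$ on $\Gr^P_i$ is induced from the honest action on $\cK_{a,\pur}$, which sends $P_i$ to $P_{i-j}$. Hence $\t(h P_i)=0$, and the factorization holds. Surjectivity of the induced map is immediate, since $\Gr^P_*\t$ is already surjective by the definition of $P_\bu H_{\g,\pur}$.

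For injectivity I will use a dimension count rather than track kernels directly. By Proposition \ref{p:perv Gr free}, $\Gr^P_*\cK_{a,\pur}$ is a free graded $\homog{*}{A_a}$-module. For a free module $M$ over the exterior-type algebra $\homog{*}{A_a}$, the total dimension of $M\ot_{\homog{*}{A_a}}\Qlbar$ equals $\dim M/2^{2\ov d}$. The same holds for $\cK_{a,\pur}$ itself, by Lemma \ref{l:Ka free}. Since $\dim\Gr^P_*\cK_{a,\pur}=\dim\cK_{a,\pur}$, both tensor products have the same total dimension, namely $\dim H_{\g,\pur}$. The right-hand side of \eqref{GrP isom} also has total dimension $\dim H_{\g,\pur}$, because it is an associated graded of $H_{\g,\pur}$. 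A surjection between finite-dimensional spaces of equal dimension is an isomorphism. Bigradedness is automatic: $\t$ respects the cohomological grading with shift $2\ov d$, and the perverse index shift by $2\ov d$ is built into the definition of $P_iH_{\g,\pur}$.

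The step I expect to be the main obstacle is making sure the two module structures match. The $\homog{*}{A_a}$-action on $\Gr^P_*\cK_a$ in Proposition \ref{p:perv Gr free} is Ng\^o's sheaf-theoretic action. I need it to be the one induced from the geometric action of $\cP'_a$ on $M_a$, and I need it to survive passage to the pure quotient. I would argue that Ng\^o's action comes from the action of the Picard stack $\cP$ on $\cM$ over $\cA^{\el}$. Restricted to the fiber at $a$, this is the action used in Lemma \ref{l:Ka free}, via \eqref{Fl to Ma}. The perverse truncation is functorial for maps of complexes of the form $\homog{j}{A}\ot\cK\to\cK[-j]$, which sends ${}^p\t_{\le i}$ into ${}^p\t_{\le i-j}$ after a shift. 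This compatibility is what allows the freeness count above, together with the statement that $\t$ annihilates $\homog{>0}{A_a}$-multiples, to be applied at the level of perverse graded pieces.
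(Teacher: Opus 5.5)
Your proposal is correct and is essentially the paper's proof: surjectivity comes directly from defining $P_{\bu}H_{\g,\pur}$ as the image of $P_{\bu+2\ov d}\cK_{a,\pur}$ under $\t$, and injectivity follows by comparing total dimensions using freeness over $\homog{*}{A_{a}}$ of both $\Gr^{P}_{*}\cK_{a,\pur}$ (Proposition \ref{p:perv Gr free}) and $\cK_{a,\pur}$ (Lemma \ref{l:Ka free}). Your first step checks that $\Gr^{P}_{*}\t$ kills $\homog{>0}{A_{a}}\cdot\Gr^{P}_{*}\cK_{a,\pur}$, using that the geometric action is compatible with $P_{\bu}$; the paper leaves this implicit, so your step adds care rather than departing from its argument.
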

\begin{proof}
From the construction we have a surjection of cohomologically graded vector spaces $\Gr^{P}_{i+2\ov d}\cK_{a,\pur}\surj\Gr^{P}_{i}H_{\g,\pur}[-2\ov d](-\ov d)$ for all $i$. Therefore \eqref{GrP isom} in question is surjective. To show \eqref{GrP isom} is an isomorphism, it suffices to compare the total dimension. By  the freeness result in \eqref{p:perv Gr free}, the left side of \eqref{GrP isom} has dimension equal to $\dim\Gr^{P}_{*}\cK_{a,\pur}/\dim \homog{*}{A_{a}}$, which is the same as $\dim  \cK_{a,\pur}/\dim \homog{*}{A_{a}}$. By Lemma \ref{l:Ka free}, $\dim  \cK_{a,\pur}/\dim \homog{*}{A_{a}}=\dim H_{\g,\pur}$, which is the same as $\dim \Gr^{P}_{*}H_{\g,\pur}$.  Thus the two sides of \eqref{GrP isom} have the same total dimension. The lemma is proved.
\end{proof}

\begin{theorem}\label{th:perv fil}
The filtration $P_{\bu}H_{\g,\pur}$ on $H_{\g,\pur}$ is a perverse filtration in the sense of Definition \ref{d:perv fil}.
\end{theorem}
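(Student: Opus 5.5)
We verify the five axioms of Definition \ref{d:perv fil} one by one, transporting each property from the global filtration $P_\bu\cK_{a,\pur}$ through $\t$, with Lemma \ref{l:GrP global reduces to GrP local} as the main tool. Axiom \eqref{comp grading} holds because $P_i\cK_a$ is a graded subspace of $\cK_a$ (it is the stalk of a truncation), so its image in $\cK_{a,\pur}$ and then under the graded map $\t$ is graded.

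Axiom \eqref{PW} follows from two facts. The $\Wa$-action on $\cK$ constructed in \cite[\S5]{YGS} is an action on the complex, so it preserves the perverse truncations. By Lemma \ref{l:Wa equiv}, $\t$ is $\Wa$-equivariant.

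Axioms \eqref{HL} and \eqref{Chern half} concern the classes $\y$ and $\cohog{2}{\cB}$. Globally, $\y$ is the first Chern class of a relatively ample line bundle on $\cM^{\el}/\cA^{\el}$, namely the determinant bundle of Theorem \ref{th:det ample Hit}. Its restriction to $\Fl_\g$ is a nonzero multiple of $\y\in\cohog{2}{\Gr}$. The classes in $\cohog{2}{\cB}$ come from the line bundles $\cE^B_x\times^B\l$ on $\cM$. We must check that $\t$ intertwines the cup products; this holds because cup product with a class pulled back from $\L_\g\bs\Fl_\g$ commutes with Gysin pushforward along $\pi$ (projection formula). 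One then verifies that the global classes restrict to the local ones under \eqref{Fl to Ma} up to classes pulled back from $\cP'_a$, and pure classes of degree $1$ or $2$ from $A_a$ do not affect the top-degree Gysin image.

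Any degree-$2$ class maps $P_i\cK$ to $P_{i+2}\cK$. For the Chern half we need the sharper statement that $c_1(\cO(\l))$ raises $P$ by only $1$. I would obtain this by noting that these line bundles are pulled back along $\nu:\cM\to\cM^{\Hit}$, which is a Grothendieck-type (Springer) alteration at $x$. Then $\bR\nu_*\Qlbar$ decomposes under the Springer $W$-action into pieces $\cK^{\Hit}\ot(\text{Springer local systems})$, so that $\cohog{*}{\cB}$ acts on a finite-type fibration of relative dimension $\dim\cB$. Relative hard Lefschetz for $f^\Hit$ together with the perversity bounds for $\nu$ should give the step of $1$. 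Alternatively, one could argue via the support theorem (Theorem \ref{th:full supp}) by spreading the $\cohog{2}{\cB}$ action over the full family and comparing at the generic point. I expect this axiom to be the main obstacle.

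Hard Lefschetz \eqref{HL} comes from relative hard Lefschetz \cite{BBD}: $\y^i:{}^p\cK^{d-i}\isom{}^p\cK^{d+i}$, with $d=d_\g+\ov d$ (the dimension of $\cM_a$ is $d_\g+\dim A_a$). This gives an isomorphism $\y^i:\Gr^P_{d-i}\cK_{a,\pur}\isom\Gr^P_{d+i}\cK_{a,\pur}$. The map commutes with the $\homog{*}{A_a}$-action on $\Gr^P_*$, since $\y$ restricted to $\cP'_a$-orbits is the polarization and the action is compatible up to lower terms that vanish on $\Gr$. Tensoring over $\homog{*}{A_a}$ and applying \eqref{GrP isom} yields $\Gr^P_{d_\g-i}H_{\g,\pur}\isom\Gr^P_{d_\g+i}H_{\g,\pur}$ after the shift by $2\ov d$.

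Axiom \eqref{fil bounds} then follows. Since $\Gr^P_j\cK_{a,\pur}=0$ for $j<0$, we get $P_{-2\ov d-1}H=0$. Combining hard Lefschetz with symmetry and the fact that $P_iH=H$ for $i\ge 2d_\g$ gives vanishing for $i<0$. Finally, $P_iH^i=H^i$ holds because the perverse truncation of a complex concentrated in degrees $\ge-\dim\cA$ contains the ordinary truncation: $P_{i+2\ov d}\cK_a$ contains all of degree $\le i+2\ov d$, and this maps onto $H^i[-2\ov d]$.
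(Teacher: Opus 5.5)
Your overall strategy, transporting each axiom from $P_\bu\cK_{a,\pur}$ through $\t$, is the paper's. Axioms \eqref{comp grading} and \eqref{PW}, and the equality $P_iH^i_{\g,\pur}=H^i_{\g,\pur}$, go through as you say. For the last one it suffices that perverse sheaves on $\cA^{\el}$ have stalks in degrees $\ge-\dim\cA$, together with the surjectivity of $\t$ from Lemma \ref{l:Ka free}.

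The genuine gap is the descent of hard Lefschetz. Your claim that $\y_\cM$ commutes with the $\homog{*}{A_a}$-action on $\Gr^P_*\cK_a$ is false. The polarization is the obstruction, not the reason: already for one polarized abelian variety $(A,\th)$, with $h\in\homog{1}{A}$ acting by contraction, $[\iota_h,\th\wedge-]=(\iota_h\th)\wedge-\neq0$. Your index bookkeeping hides this. After tensoring over $\homog{*}{A_a}$, \eqref{GrP isom} sends $\Gr^P_{d\mp i}\cK_{a,\pur}$ to $\Gr^P_{d-2\ov d\mp i}H_{\g,\pur}=\Gr^P_{d_\g-\ov d\mp i}H_{\g,\pur}$, not to $\Gr^P_{d_\g\mp i}H_{\g,\pur}$. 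In fact, if the commutation held, your descent would validly show that $\dim\Gr^P_\bu H_{\g,\pur}$ is symmetric about $d_\g-\ov d$. Together with the symmetry about $d_\g$ (Lemma \ref{l:GrPH palindromic}), it would then be $2\ov d$-periodic, hence zero for $\ov d>0$.

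At the indices that actually matter, $2\ov d+d_\g\mp i$, relative hard Lefschetz only gives that $\y_\cM^i:\Gr^P_{2\ov d+d_\g-i}\cK_{a,\pur}\to\Gr^P_{2\ov d+d_\g+i}\cK_{a,\pur}$ is surjective, because the midpoint $d+\ov d$ lies above $d$. The paper uses just this surjectivity. It upgrades it with the dimension identity $\dim\Gr^P_{d_\g-i}H_{\g,\pur}=\dim\Gr^P_{d_\g+i}H_{\g,\pur}$, extracted from freeness (Proposition \ref{p:perv Gr free} and \eqref{GrP isom}) as $\Psi(t)=\Phi(t)A(t)$ with $\Psi$ and $A$ palindromic. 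That identity is the ingredient missing from your argument. It also yields $P_{-1}H_{\g,\pur}=0$ in \eqref{fil bounds} with no Lefschetz statement on $H_{\g,\pur}$; your lower bound, which goes through local hard Lefschetz, inherits the gap.

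The transport of cup products through $\t$ also needs more care than you give it. For $c_1(\cL_\l)$ it is exact: $\cL_\l$ is pulled back from $\L_\g\bs\Fl_\g$, so the projection formula applies. For $\y_\cM$ it is not. Your assertion that degree $1$ and $2$ classes from $A_a$ do not affect the top-degree Gysin image is false, because they pair with the components of fiber degree $2\ov d-1$ and $2\ov d-2$. Take $y=\y_\cM^{\ov d-1}\cup\pi^*\beta$ with $0\neq\beta\in H_{\g,\pur}$.
\begin{itemize}
\item The projection formula gives $\t(\y_\cM\cup y)=(\int_{A_a}\y_\cM^{\ov d})\,\beta\neq0$, since $\y_\cM$ restricts to a polarization on the fibers, as you note.
\item On the other hand, $\t(y)=\t(\y_\cM^{\ov d-1})\,\beta=0$ for degree reasons.
\end{itemize}
The same computation for $A\times Y\to Y$ shows that the discrepancy survives on $\Gr^P$ in the range used for \eqref{HL}. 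The paper's Lemma \ref{l:eta local global} asserts exactly this commutation; its diagram \eqref{diag Kunneth} fails on classes of $A_a$-degree $2\ov d-2$. So this step cannot simply be borrowed from the paper. Pushing even the surjectivity above down to $H_{\g,\pur}$ requires separating the fiberwise part of $\y_\cM$ from the part that matches $\y$ under $\t$.

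Two further points.
\begin{itemize}
\item Relative hard Lefschetz for $\cK$ over $\cA^{\el}$ is not a direct application of \cite{BBD}. The class $\LL_\cM=\nu^*\LL_{\Hit}$ is trivial on the Springer fibers of $\nu$, so it is relatively anti-ample only over $\cA^{\el,x}$. The point $a$ you need lies outside $\cA^{\el,x}$ whenever $a(x)$ is not regular semisimple, e.g.\ for $\g_0$ topologically nilpotent. The paper proves the isomorphism over $\cA^{\el,x}$ and extends it to $\cA^{\el}$ using Theorem \ref{th:full supp} (Lemma \ref{l:global HL}).
\item For \eqref{Chern half}, you need the global input that $c_1(\cL_\l)$ maps $P_i\cK_a$ into $P_{i+1}\cK_a$. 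Your Springer-alteration heuristic does not establish this. The paper imports it from \cite[Lemma 3.2.3]{YLD}, after which the exact projection formula finishes the axiom.
\end{itemize}
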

\begin{proof} 
We check the defining properties for the perverse filtration.

\eqref{comp grading} holds by construction.

\eqref{fil bounds} By \eqref{non-can decomp}, the range of the perverse filtration $P_{\bu}\cK_{a}$ is the interval $[0,2d]$. Therefore $P_{2d}\cK_{a}=\cK_{a}$ hence $P_{2d}\cK_{a,\pur}=\cK_{a,\pur}$. This implies $P_{2d_{\g}}H_{\g,\pur}=H_{\g,\pur}$, hence $\Gr^{P}_{i}H_{\g,\pur}=0$ for $i>2d_{\g}$. By Lemma \ref{l:GrPH palindromic}, we have $\Gr^{P}_{i}H_{\g,\pur}=0$ for $i<0$. Since $P_{i'}H_{\g,\pur}=0$ for $i'$ sufficiently negative, this implies $P_{-1}H_{\g,\pur}=0$.

We now show that $P_{i}H^{i}_{\g,\pur}=H^{i}_{\g,\pur}$. By construction, it suffices to show that $P_{i}(\upH^{i}\cK_{a,\pur})=\upH^{i}\cK_{a,\pur}$, or even stronger $P_{i}(\upH^{i}\cK_{a})=\upH^{i}\cK_{a}$, or $\Gr^{P}_{i'}(\upH^{i}\cK_{a})=0$ for $i'>i$. In other words, we need to show that $\Gr^{P}_{i'}\cK_{a}$ is in cohomological degrees $\ge i'$. By \eqref{non-can decomp}, $\Gr^{P}_{i'}\cK_{a}$ is the stalk of a shifted perverse sheaf ${}^{p}\cK^{\dim \cA+i'}[-\dim \cA-i']$ at $a$. By Theorem \ref{th:full supp}, ${}^{p}\cK^{\dim \cA+i'}$ has full support. Therefore, all stalks of ${}^{p}\cK^{\dim \cA+i'}$ lie in degrees $\ge -\dim \cA$, and hence all stalks of ${}^{p}\cK^{\dim \cA+i'}[-\dim \cA-i']$ lie in degrees $\ge i'$. 

\eqref{PW} By Lemma \ref{l:Wa equiv} and the definition of $P_{i}H_{\g,\pur}$ as the image of perverse filtration pieces of $\cK_{a,\pur}$, it suffices to show that $P_{i}\cK_{a,\pur}$, or $P_{i}\cK_{a}$, is stable under $\Wa$. However, this is clear because $\Wa$ acts on the complex $\cK$, and it has to preserve the perverse truncations ${}^{p}\t_{\le i}\cK$.

\eqref{HL} We postpone the proof to Proposition \ref{p:HL}.

\eqref{Chern half} Each $\l\in \xch(T)$ gives a $G$-equivariant line bundles $\cL_{\l}$ on $\cB$. We need to show that the cup product by $c_{1}(\cL_{\l})$ on $H_{\g,\pur}$ sends $P_{i}H_{\g,\pur}$ to $P_{i+1}H_{\g,\pur}$. 

We have maps
\begin{equation*}
M_{a}\xr{\pi} \L_{\g}\bs \Fl_{\g}\to \BB T
\end{equation*}
taking $(\cE, \cE^{B}_{x}, \ph)$ to the $T$-torsor $\cE^{B}_{x}/N$, and this map factors through $\L_{\g}\bs \Fl_{\g}$. The $G$-equivariant line bundle $\cL_{\l}$ on $\cB$ descends to $\BB T$; we use the same notation $\cL_{\l}$ to denote its pullback to $ \L_{\g}\bs \Fl_{\g}$ and to $\cM_{a}$. It is clear that the map $\t$ is equivariant under the cup product by $c_{1}(\cL_{\l})$. Therefore, by the definition of the perverse filtration on $H_{\g,\pur}$, it suffices to show that $c_{1}(\cL_{\l})$ sends $P_{ i}\cK_{a,\pur}$ to $P_{ i+1}\cK_{a,\pur}$. By \cite[Lemma 3.2.3]{YLD}, $c_{1}(\cL_{\l})$ sends $P_{ i}\cK_{a}$ to $P_{ i+1}\cK_{a}$, hence the same is true for the pure part.
\end{proof}

\begin{cor}\label{c:perv fil exists} For any regular semisimple and integral $\g_{0}\in\frg\lr{t}$, $H_{\g_{0},\pur}$ carries a perverse filtration in the sense of  Definition \ref{d:perv fil}.
\end{cor}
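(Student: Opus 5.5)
The plan is to transport the filtration built in Theorem \ref{th:perv fil} from an auxiliary $\g$ back to $\g_{0}$, using the approximation results.

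\emph{Choice of auxiliary data.} Given $\g_{0}$ with invariant $a_{0}\in\frc(\cO_{F})$:
\begin{itemize}
\item Take $N$ as in Lemma \ref{l:approx}.
\item Choose a curve $X$, a point $x$, and a line bundle $\cL$ with $\deg\cL\ge 2g+Nr+1$ (for instance $X=\mathbb{P}^{1}$ and $\cL$ of large degree).
\item Identify $F_{x}\cong F$.
\item By Lemma \ref{l:approx gamma}, pick $a\in\cA^{\el}(k)$ that is good away from $x$ and satisfies $a\equiv a_{0}\mod \fm_{x}^{N}$.
\item Set $\g=\g(a)$.
\end{itemize}

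\emph{Transport from $\g$ to $\g_{0}$.} Theorem \ref{th:perv fil} produces a perverse filtration on $H_{\g,\pur}$. Lemma \ref{l:approx} gives an element $g$, and Lemma \ref{l:approx equiv} then gives an isomorphism $\Ad(g)^{*}:\cohog{*}{\Fl_{\g}}\cong\cohog{*}{\Fl_{\g_{0}}}$ with the following properties:
\begin{itemize}
\item it is graded;
\item it intertwines the $A_{\g}$- and $A_{\g_{0}}$-actions;
\item it is $\Wa$-equivariant;
\item it is compatible with $i_{\g}^{*}$ and $i_{\g_{0}}^{*}$, hence with cup product by $\cohog{2}{\Fl}$, in particular by $\y$ and $\cohog{2}{\cB}$.
\end{itemize}
Since $\Ad(g)$ comes from an isomorphism of varieties defined over a finitely generated base (after spreading out), it also respects weight filtrations. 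Two points then need checking.
\begin{itemize}
\item \textbf{Invariants.} The $\L_{\g}$-invariants agree with the $A_{\g}$-invariants on the relevant pure part. $\L_{\g}$ maps onto $\L_{a,x}$, and the connected group $R^{\c}_{a,x}$ acts trivially on cohomology. So the $\L_{\g}$-action on cohomology factors through $A_{\g}$, with the same image, and the invariants coincide.
\item \textbf{Pure parts.} $\Ad(g)^{*}$ induces $H_{\g,\pur}\cong H_{\g_{0},\pur}$.
\end{itemize}
Define $P_{i}H_{\g_{0},\pur}$ as the image of $P_{i}H_{\g,\pur}$ under this isomorphism.

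\emph{Checking the axioms.} Each axiom of Definition \ref{d:perv fil} is expressed only through the grading, the $\Wa$-action, and cup products by $\y$ and $\cohog{2}{\cB}$. All of these are intertwined by $\Ad(g)^{*}$, so the axioms transfer verbatim. We also need $H_{\g_{0},\pur}$ to be admissible. Its degree-zero piece is $\Qlbar$ because $\Fl_{\g_{0}}$ is connected. It is a subquotient stable under the $\Wa$- and $\cohog{2}{\Fl}$-actions because those actions commute with $A_{\g}$ and are compatible with weights.

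\emph{Main obstacle.} The delicate point is the compatibility of $\Ad(g)^{*}$ with weights. The isomorphism $\Fl_{\g_{0}}\cong\Fl_{\g}$ is an honest isomorphism of ind-schemes, but $g\in G(F)$ may not be defined over a finite field. To handle this, I would spread out $g$, $\g$, and $\g_{0}$ over a finitely generated ring $R$. On finite-type pieces (for example, fundamental domains for the lattice action), the isomorphism then extends over $R$, and Deligne's weight filtration in \S\ref{ss:wt} is independent of the model. Hence the isomorphism respects $W_{\bu}$.
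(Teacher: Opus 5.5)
Your proposal is correct and follows essentially the same route as the paper. You globalize $\g_{0}$ via Lemma \ref{l:approx gamma}, take the filtration on $H_{\g,\pur}$ from Theorem \ref{th:perv fil}, and transport it along the isomorphism \eqref{isom coho Flg}; its compatibility with $\Sym(\cohog{2}{\Fl})$ and $\Wa$ (Lemma \ref{l:approx equiv}) makes every axiom carry over. The extra points you check are left implicit in the paper, namely compatibility of $\Ad(g)^{*}$ with weights, agreement of $\L_{\g}$- and $A_{\g}$-invariants, and admissibility; your arguments for them are sound, except that one spreads out the finite-type restrictions of the translation map rather than $g\in G(F)$ itself.
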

\begin{proof} With the globalization data $(X,\cL, x, a)$ satisfying the statement of Lemma \ref{l:approx gamma}, we transport the filtration $P_{\bu}H_{\g,\pur}$ to $H_{\g_{0},\pur}$ under the isomorphism $H_{\g_{0},\pur}\cong H_{\g,\pur}$ as in \eqref{isom coho Flg}. By Lemma \ref{l:approx equiv}, this isomorphism is equivariant under the actions of $\Sym(\cohog{2}{\Fl})$ and $\Wa$, Theorem \ref{th:perv fil} implies that $H_{\g_{0},\pur}$ also satisfies all the axioms for a perverse filtration.
\end{proof}

\begin{remark} It is likely that the perverse filtration on $H_{\g_{0},\pur}$ is independent of the choice of globalization data $(X,\cL, x, a)$.  We offer two pieces of evidence:
\begin{enumerate}
\item In the case $G=\SL_{n}$, one can interpret $\Gr_{\g}$ as a local version of the compactified Jacobian of a planar curve singularity. In \cite{MY} and \cite{MS}, a perverse filtration was defined on the global compactified Jacobian of a planar curve using a versal deformation, which was shown to be independent of the choice of versal deformations in \cite[Proposition 2.15]{MY}.
\item As we will see in Lemma \ref{l:perv strict comp}, the perverse filtration intersected with the image of the map \eqref{intro ph} is independent of any choices.
\end{enumerate}\end{remark}

\subsection{Relative hard Lefschetz}
The goal of this subsection is to verify that the filtration $P_{i}H_{\g,\pur}$ satisfies property \eqref{HL} of Definition \ref{d:perv fil}.

Choose a faithful representation $V$ of $G$ of dimension $n$, we get a map $\io_{V}: \Bun_{G}\to \Bun_{n}$ given by $\cE\mt \cE\times^{G}V$, the latter being the moduli stack of rank $n$ vector bundles on $X$. Let $\LL_{\det}\in \Pic(\Bun_{G})$ be the determinant line bundle: its fiber at $\cV\in \Bun_{n}$ is the determinant of cohomology $\det \bR\G(X,\cV)$. Let $\LL_{V}=\io_{V}^{*}\LL_{\det}\in \Pic(\Bun_{G})$.

We have a local analog of $\LL_{\det}$ and $\LL_{V}$. Let $\Gr_{n}$ be the affine Grassmannian of $\GL_{n}$, which is the moduli space of rank $n$ $k\tl{t}$-submodules $\L$ of $k\lr{t}^{n}$. Let $\LL_{\det,\Gr}$ be the line bundle on $\Gr_{n}$ whose fiber at $\L\subset k\lr{t}^{n}$ is
\begin{equation*}
\det(\L/t^{N}\L_{0})\ot \det(\L_{0}/t^{N}\L_{0})^{-1}
\end{equation*}
where $\L_{0}=k\tl{t}^{n}$ and $N$ is sufficiently large so that $t^{N}\L_{0}\subset \L$, and $\det(-)$ means top exterior power of a $k$-vector space. Let $\io_{V,\Gr}: \Gr_{G}\to \Gr_{n}$ be the map induced by the representation $G\to \GL(V)$, and let $\LL_{V,\Gr}=\io_{V,\Gr}^{*}\LL_{\det, \Gr}$.

Below we fix $V$ and omit it from notations, so we have the line bundles $\LL=\LL_{V}$ on $\Bun_{G}$ and $\LL_{\Gr}=\LL_{V,\Gr}$ on $\Gr_{G}$, etc. We have forgetful maps
\begin{equation*}
\cM\to \cM^{\Hit}\to \Bun_{G}
\end{equation*}
We denote  by $\LL_{\cM}$ and $\LL_{\Hit}$ the pullbacks of $\LL$ to $\cM$ and $\cM^{\Hit}$.

\begin{lemma}\label{l:comp det}
Let $s_{\g}: \Fl_{\g}\to \cM_{a}$ be the map given by restricting \eqref{prod form} to $\{1\}\times \Fl_{\g}$. Then $s_{\g}^{*}\LL_{\cM}$ is isomorphic to the pullback of $\LL_{\Gr}$ to $\Fl_{\g}$. 
\end{lemma}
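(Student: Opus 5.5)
The plan is to make the map $s_{\g}$ explicit via Beauville--Laszlo gluing and then compare the two determinant line bundles as determinants of cohomology. Concretely, $s_{\g}$ sends $g\bI\in\Fl_{\g}$ to the Higgs bundle obtained by gluing the trivial bundle on the formal disk $D_{x}$, twisted by $g$, with the bundle $\cE_{0}$ on $X-\{x\}$. Here $(\cE_{0},\ph_{0})$ is the Higgs bundle determined by the Kostant section, i.e. the point of $\cM_{a}$ corresponding to the unit in $\cP_{a}$ and to the base point $\g\in\frg(\cO_{x})$ of $\Gr_{\g}$. Thus the composite
\begin{equation*}
\Fl_{\g}\to\cM_{a}\to\Bun_{G}
\end{equation*}
factors as $\Fl_{\g}\to\Gr_{\g}\subset\Gr_{G}\to\Bun_{G}$, where the last map is the uniformization map $u:\Gr_{G}\to\Bun_{G}$ centered at $\cE_{0}$. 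It therefore suffices to show $u^{*}\LL\cong\LL_{\Gr}$, at least after restriction to $\Gr_{\g}$.

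To prove this, I will push everything forward along $V$. The map $\io_{V}$ is compatible with uniformization: $\io_{V}\circ u=u_{n}\circ\io_{V,\Gr}$, where $u_{n}:\Gr_{n}\to\Bun_{n}$ glues a lattice $\L\subset F_{x}^{n}$ with $\cV_{0}=\cE_{0}\times^{G}V$, using a trivialization of $\cV_{0}$ on $D_{x}$. This reduces the claim to $\GL_{n}$, namely $u_{n}^{*}\LL_{\det}\cong\LL_{\det,\Gr}\ot(\text{constant line})$. For that, let $\cV_{\L}$ be the glued bundle. When $t^{N}\L_{0}\subset\L$ (the relevant case, working on each finite-type piece $Z$ of $\Gr_{n}$), there is an exact sequence of sheaves
\begin{equation*}
0\to\cV_{0}(-Nx)\to\cV_{\L}\to\L/t^{N}\L_{0}\to 0,
\end{equation*}
and similarly with $\L_{0}$ in place of $\L$. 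Multiplicativity of $\det\bR\G$ gives $\det\bR\G(\cV_{\L})\cong\det\bR\G(\cV_{0}(-Nx))\ot\det(\L/t^{N}\L_{0})$. Dividing by the same identity for $\L_{0}$ yields
\begin{equation*}
u_{n}^{*}\LL_{\det}\cong\LL_{\det,\Gr}\ot\det\bR\G(\cV_{0}),
\end{equation*}
and the last factor is a constant line. All of these isomorphisms are canonical, hence valid in families over $Z$, and they are compatible as $N$ and $Z$ grow.

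The step I expect to be most delicate is the first one: verifying that $s_{\g}$, as defined through the product formula \eqref{prod form} (which is a priori only a homeomorphism), is really the algebraic gluing map. A related subtlety is that the contributions from the points $y\ne x$ in the product formula are trivial, i.e. the bundle is unchanged away from $x$. I would argue via the construction in \cite[Proposition 2.4.1]{YGS}, where the product map is built from Beauville--Laszlo gluing of the local data $(\g_{y}(a))$ against the Kostant-section Higgs bundle. On $\{1\}\times\Fl_{\g}$ only the gluing at $x$ varies. Since $\Fl_{\g}$ is reduced, a morphism agreeing with the gluing map suffices, and line bundles pulled back along it are then identified as above.
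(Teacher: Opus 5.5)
Your proposal is correct and follows the paper's proof. The paper likewise factors $s_{\g}$ through the uniformization map $u_{x}:\Gr_{G,\cE_{0}}\cong\Gr_{G}\to\Bun_{G}$, obtained by modifying the fixed Higgs bundle $(\cE_{0},\ph_{0})$ at $x$, and then uses the canonical isomorphism $u_{x}^{*}\LL\cong\LL_{\Gr}\ot\det\bR\G(X,\cE_{0}\times^{G}V)$. Your reduction to $\GL_{n}$ and the exact sequence $0\to\cV_{0}(-Nx)\to\cV_{\L}\to\L/t^{N}\L_{0}\to 0$ just spell out the step the paper says ``follows from definition.''
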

\begin{proof}
The map $s_{\g}$ is obtained by modifying a fixed $G$-Higgs $(\cE_{0},\ph_{0})\in \cM^{\Hit}_{a}$ at $x$. By construction, it fits into a commutative diagram
\begin{equation*}
\xymatrix{ \Fl_{\g}\ar[d]_{\om_{\g}} \ar[r]^{s_{\g}} & \cM_{a}\ar[d]^{\om_{a}}\\
\Gr_{G,\cE_{0}}\ar[r]^{u_{x}} & \Bun_{G}
}
\end{equation*}
Here $\Gr_{G,\cE_{0}}$ is the moduli space classifying a $G$-bundle on $X$ together with an isomorphism with $\cE_{0}$ over $X-\{x\}$.  Choosing a trivialization of $\cE_{0}$ on the formal disk $D_{x}$ around $x$ identifies $\Gr_{G,\cE_{0}}$ with $\Gr_{G}$, such that $\cE_{0}|_{D_{x}}$ corresponds to the base point of $\Gr_{G}$. The vertical maps are the forgetful maps. The desired statement follows from the canonical isomorphism of line bundles on $\Gr_{G}$
\begin{equation*}
u_{x}^{*}\LL\cong\LL_{\Gr}\ot \det\bR\Gamma(X,\cE_{0}\times^{G}V)
\end{equation*}
which follows from definition (the second tensor factor on the right side is a constant line). 
\end{proof}

We recall the following result due to Faltings.
\begin{theorem}[{\cite[Theorem II.5]{F}}]\label{th:det ample Hit} The restriction of $\LL_{\Hit}$ to the coarse moduli space of $\cM^{\Hit,\el}=\cM^{\Hit}|_{\cA^{\el}}$ is anti-ample.
\end{theorem}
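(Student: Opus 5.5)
The statement is relative to an affine base, so I would reduce it to a statement about a proper morphism. The Hitchin base $\cA\cong\prod_{i}\cohog{0}{X,\cL^{\ot d_{i}}}$ is an affine space, so $\cA^{\el}$ is quasi-affine. The coarse space $\ov{\cM}^{\Hit,\el}$ of $\cM^{\Hit,\el}$ is proper over $\cA^{\el}$, by the properness of the elliptic Hitchin fibration. Over a quasi-affine base, a line bundle that is ample relative to the base is ample. It therefore suffices to show that a suitable power of $\LL_{\Hit}^{-1}$ is relatively ample for $\ov{\cM}^{\Hit,\el}\to\cA^{\el}$.

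My first choice is to reduce to $\GL_{n}$ through the faithful representation $V$. The representation induces a morphism $\io:\cM^{\Hit}_{G}\to\cM^{\Hit}_{\GL_{n}}$, $(\cE,\ph)\mapsto(\cE\times^{G}V,d\rho(\ph))$, compatible with the Hitchin maps via $\frc_{G}\to\frc_{\GL_{n}}$. By definition $\LL_{\Hit}=\io^{*}\LL_{\det}$.

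On the elliptic locus the image consists of stable Higgs bundles. The reason is that an elliptic $G$-Higgs bundle has finite automorphism group and admits no $\ph$-stable parabolic reduction of the relevant kind. I would use this to land in the semistable locus of the $\GL_{n}$-moduli space, which is a GIT quotient over the $\GL_{n}$-Hitchin base.

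For $\GL_{n}$ I would use the standard construction: a Quot scheme (or a space of framed Higgs sheaves $\cO(-m)^{N}\surj\cE$ with Higgs field) modulo $\GL_{N}$. The GIT linearization there can be identified, after twisting and taking a power, with the determinant of cohomology $\det\bR\Gamma(X,\cE)^{-1}$, i.e. the generalized theta bundle. The GIT quotient is then projective over the $\GL_{n}$-Hitchin base, and $\LL_{\det}^{-1}$ descends to a relatively ample bundle on it. The two bookkeeping points here are:
\begin{itemize}
\item the center of $\GL_{N}$ must act trivially on the chosen power, which fixes the twist;
\item the linearization must match $\det\bR\Gamma$ up to a line bundle pulled back from the base, which is harmless for relative ampleness.
\end{itemize}

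Next I would show that $\ov\io:\ov{\cM}^{\Hit,\el}_{G}\to\ov{\cM}^{\Hit,ss}_{\GL_{n}}\times_{\cA_{\GL_{n}}}\cA^{\el}$ is finite. It is proper, since the source is proper over $\cA^{\el}$ and the target is separated over it. For quasi-finiteness, note that a fixed $\GL_{n}$-Higgs bundle has only finitely many $G$-reductions compatible with the Higgs field, up to isomorphism. The argument for this would go on the generic fiber: there the Higgs field is regular semisimple, so the reductions are controlled by the finite-type data of $J_{a}$-torsors modulo a finite group. The finiteness of $\pi_{0}$ of the automorphism groups on the elliptic locus is what rules out positive-dimensional families. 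Since the pullback of a relatively ample line bundle under a finite morphism is relatively ample, $\LL_{\Hit}^{-1}=\ov\io^{*}\LL_{\det}^{-1}$ would then be relatively ample, and the theorem would follow.

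I expect the main obstacle to be this quasi-finiteness step, together with checking that elliptic $G$-Higgs bundles map to GIT-semistable, and preferably stable, $\GL_{n}$-Higgs bundles; stability of an elliptic $G$-Higgs bundle does not obviously survive the passage to the associated vector bundle. If that route fails, the fallback is to work directly with $G$-Higgs bundles in Faltings' framework. There I would prove the Hilbert–Mumford criterion for the determinant linearization on a $G$-level-structure parameter space, using that elliptic points have no $\ph$-invariant one-parameter subgroups, hence no destabilizing parabolic reductions. One would then verify ampleness fiberwise on each proper $\cM_{a}$ and conclude by openness of ampleness in proper flat families.
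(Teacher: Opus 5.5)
Your reduction to relative ampleness over the quasi-affine base $\cA^{\el}$ is fine. The $\GL_{n}$ route, however, breaks at the step ``on the elliptic locus the image consists of stable Higgs bundles.''

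\textbf{The missing idea: semistability of the associated Higgs bundle.} Your reasons (finite automorphism groups, no $\ph$-compatible parabolic reductions) show that $(\cE,\ph)$ is stable \emph{as a $G$-Higgs bundle}. They say nothing about the associated Higgs vector bundle $(\cE\times^{G}V,\ph_{V})$, and stability of that bundle fails in general.
\begin{enumerate}
\item Only faithfulness of $V$ is assumed. If $V=V_{1}\oplus V_{2}$, the associated Higgs bundle splits into $\ph$-invariant summands of degree $0$, because $G$ has no characters.
\item Even for irreducible $V$, stability fails. For the adjoint representation of $E_{8}$, the kernel $\ker(\mathrm{ad}\,\ph)\subset\Ad(\cE)$ is a proper nonzero $\ph$-invariant subsheaf.
\end{enumerate}
So at best you land in the strictly semistable locus. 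Even semistability is a preservation-of-semistability theorem for associated Higgs bundles that you have not supplied. In characteristic $0$ it needs real input, such as Kempf's optimal destabilizing flags or a Hitchin--Kobayashi correspondence. In characteristic $p$, known preservation results require $p$ to be large relative to $V$, and $p>2h$ does not give this for an arbitrary faithful $V$. This is not bookkeeping; it is the missing idea.

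\textbf{Consequence for your finiteness step, and the paper's route.} The gap also undercuts your quasi-finiteness argument as stated. Points of the GIT quotient are S-equivalence classes, so the fiber of $\ov\io$ over a point is not the set of $G$-reductions of one fixed $\GL_{n}$-Higgs bundle. Moreover, finiteness of the automorphism groups of individual objects does not bound the number of non-isomorphic objects in a fiber. Granting semistability, this part can be repaired: $\GL(V)/G$ is affine, so $\cM^{\Hit}_{G}\to\cM^{\Hit}_{\GL(V)}$ is affine. The induced map on good moduli spaces is then affine, and affine plus proper gives finite.

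The paper avoids $\GL_{n}$ entirely. An elliptic $(\cE,\ph)$ admits no $\ph$-compatible reduction to a proper parabolic with Levi $M$, since otherwise $a$ would come from $\cA_{M}$. Hence it lies in the stable locus of $G$-Higgs bundles. Faltings' theorem then says directly that $\LL^{-1}$ is ample on the coarse space of that stable locus. Your fallback is essentially this route. But the ``Hilbert--Mumford criterion for the determinant linearization'' you propose to prove is precisely the content of Faltings' theorem. Citing it at that point, rather than reproving it, is what the paper does.
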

Indeed, Faltings showed that $\LL_{\Hit}$ is anti-ample when restricted to the stable locus of $\cM^{\Hit}$, which contains $\cM^{\Hit,\el}$.

\begin{lemma}\label{l:global HL} Let $\y_{\cM}=c_{1}(\LL_{\cM})\in \cohog{2}{\cM}(1)$. Then the complex $\cK$ satisfies relative hard Lefschetz with respect to $\y_{\cM}$. In other words, for $0\le i\le d$, cupping with $\y_{\cM}^{i}$ induces an isomorphism in $\Perv(\cA^{\el})$
\begin{equation}\label{cup eta i global}
\cup \y_{\cM}^{i}: {}^{p}\cK^{d-i}\isom {}^{p}\cK^{d+i}(i).
\end{equation}
\end{lemma}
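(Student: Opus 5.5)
The plan is to reduce to the relative hard Lefschetz theorem of \cite{BBD} for projective morphisms. Two ingredients are needed: a projective model of $f^{\el}$, and an ample class on it to which $\y_{\cM}$ is proportional.

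\textbf{Step 1: projectivity.} By Faltings' Theorem \ref{th:det ample Hit}, $\LL_{\Hit}^{-1}$ is ample on the coarse moduli space $M^{\Hit,\el}$ of $\cM^{\Hit,\el}$, and $f^{\Hit,\el}$ is proper. Hence $M^{\Hit,\el}\to\cA^{\el}$ is projective.

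\textbf{Step 2: the parabolic cover.} The forgetful map $\nu:\cM\to\cM^{\Hit}$ is a closed substack of the $\cB$-bundle $\cE_x/B$ over $\cM^{\Hit}$, hence projective. Its coarse space is therefore projective over $M^{\Hit,\el}$, and thus projective over $\cA^{\el}$.

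An ample class on the coarse space of $\cM^{\el}$ is $\LL_{\cM}^{-m}\otimes \cL_{\l}$ for $\l$ regular dominant and $m\gg0$. This class is not proportional to $\y_{\cM}$. To handle this, I would use that $\cM^{\el}$ is smooth (by \cite{YGS}), so $\cK$ is a summand of the pushforward of the constant sheaf from a smooth Deligne--Mumford stack with projective coarse space. Relative hard Lefschetz of \cite{BBD} (in the form of de Cataldo--Migliorini, passing to a finite cover or working rationally with coarse spaces) then holds for \emph{any} relatively ample class. By invariance under scaling, it holds for $\y_{\cM}-\epsilon c_1(\cL_\l)$ with $\epsilon$ small.

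\textbf{Step 3: removing the $\cL_\l$ term.} Rather than stopping at the class from Step 2, I would argue that relative hard Lefschetz for $c_1(\LL_{\cM})$ itself follows from the following facts. First, $c_1(\cL_\l)$ shifts the perverse filtration by at most $1$ (\cite[Lemma 3.2.3]{YLD}, as already used in Theorem \ref{th:perv fil}). Second, $\y_\cM$ shifts it by $2$. Consequently, on $\Gr^P$ the map $\cup(\y_{\cM}+\epsilon c_1(\cL_\l))^i:{}^p\cK^{d-i}\to{}^p\cK^{d+i}$ equals $\cup\y_{\cM}^i$, because every term involving $c_1(\cL_\l)$ lands in lower perverse degree. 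Hence the isomorphism for the ample class gives \eqref{cup eta i global}.

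Full support (Theorem \ref{th:full supp}) is not needed here. The $\pi_0(\cP^{\el})$-invariant part is preserved because the cup product commutes with that action.

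\textbf{Main obstacle.} I expect Step 3 to be the main obstacle: justifying that the $c_1(\cL_\l)$ perturbation drops out on graded pieces, together with the stacky issues (the coarse space, and the passage from Deligne--Mumford stacks to the BBD setting). If Step 3 fails, my first fallback would be to show that $\LL_{\cM}^{-1}$ is itself relatively ample on the elliptic parabolic locus. I would try this by checking it on fibers of $\nu$, using the relation of $\cE_x/B$ to the $\cL_\l$.
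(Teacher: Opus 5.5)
Your proof is correct, but it takes a different route from the paper's.

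\textbf{The paper's route.} The paper never looks for a relatively ample class on all of $\cA^{\el}$. It shrinks to the dense open $\cA^{\el,x}$ where $a(x)$ is regular semisimple. There $\nu$ is a $W$-torsor, hence finite, so $\LL_{\cM}=\nu^{*}\LL_{\Hit}$ is itself relatively anti-ample, and \cite[Th\'eor\`eme 5.4.10]{BBD} gives \eqref{cup eta i global} over $\cA^{\el,x}$. Theorem \ref{th:full supp} then extends this to $\cA^{\el}$: a morphism between perverse sheaves whose simple constituents all have full support is an isomorphism once it is one on a dense open.

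\textbf{Your route.} You stay on all of $\cA^{\el}$ with the honest relatively ample class $\LL_{\cM}^{-m}\otimes\cL_{\l}$. You then remove the correction term using \cite[Lemma 3.2.3]{YLD}. The step you flagged as the main obstacle is actually formal. The statement that $\cup c_{1}(\cL_{\l})$ sends ${}^{p}\tau_{\le n}\cK$ into $({}^{p}\tau_{\le n+1}\cK)[2]$ is equivalent to ${}^{p}\textup{H}^{n}(\cup c_{1}(\cL_{\l}))=0$ for all $n$. Since ${}^{p}\textup{H}^{*}$ is an additive functor, every cross term in $(\y_{\cM}-\epsilon c_{1}(\cL_{\l}))^{i}$ vanishes on perverse cohomology, and no commutativity is needed.

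\textbf{What each route buys.} Yours avoids any explicit appeal to supports, but only by importing \cite[Lemma 3.2.3]{YLD}. You need that lemma as a statement about complexes on $\cA^{\el}$, not just the stalkwise consequence quoted in the proof of Theorem \ref{th:perv fil}. Deducing the sheaf-level vanishing of ${}^{p}\textup{H}^{n}(\cup c_{1}(\cL_{\l}))$ from stalks, or proving the lemma in the first place, again goes through a support theorem. So your remark that full support is not needed holds only modulo that citation. The paper's route needs nothing about $c_{1}(\cL_{\l})$: it uses only Faltings' theorem and the weak support statement in its footnote.

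\textbf{Your fallback would fail.} $\LL_{\cM}$ is pulled back from $\cM^{\Hit}$, so it is trivial on the fibers of $\nu$. These fibers are positive-dimensional Springer fibers wherever $\ph(x)$ is not regular semisimple. Hence $\LL_{\cM}^{-1}$ is not relatively ample over $\cA^{\el}$, which is exactly why the paper shrinks to $\cA^{\el,x}$.
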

\begin{proof}
Let $\cA^{x}\subset \cA$ be the open subscheme consisting of those $a:X\to \frc/\Gm$ such that $a(x)\in \frc^{\rs}/\Gm$. By our assumption on the degree of $\cL$, the evaluation map $\ev_{x}: \cA\to  \frc/\Gm$ at $x$ is surjective, hence $\cA^{x}$ is open dense in $\cA^{x}$. In particular, $\cA^{\el,x}=\cA^{\el}\cap \cA^{x}$ is open dense in $\cA^{\el}$. 

By Theorem \ref{th:full supp}, each simple constituent of ${}^{p}\cK^{i}$ has full support, to show that \eqref{cup eta i global} is an isomorphism, it suffices to show that it is an isomorphism when restricted to $\cA^{\el,x}$. \footnote{In fact here we only need a weaker statement than full support: we only need that each simple constituent of ${}^{p}\cK^{i}$ has support that intersect $\cA^{\el,x}$, which is true for the perverse cohomology sheaves of the whole complex $\bR f_{*}\Qlbar$, as proved in \cite[Theorem 2.1.1]{YLD}.}.

The map $\nu: \cM\to \cM^{\Hit}$ restricts to a map $\nu^{\el, x}: \cM|_{\cA^{\el,x}}\to \cM^{\Hit}|_{\cA^{\el,x}}$ that is a $W$-torsor (since the choices of $B$-reductions compatible with $\ph(x)$, which is regular semisimple, form a $W$-torsor). In particular, $\nu^{\el,x}$ is finite. Since $\LL_{\Hit}$ is anti-ample on $\cM^{\Hit}|_{\cA^{\el,x}}$, its pullback to $\cM|_{\cA^{\el,x}}$ is also anti-ample. The isomorphism then follows from the relative hard Lefschetz theorem \cite[Th\'eor\`eme 5.4.10]{BBD}.
\end{proof}

\begin{lemma}\label{l:GrPH palindromic} For any $i\in \ZZ$ we have 
\begin{equation*}
\dim \Gr^{P}_{d_{\g}-i}H_{\g,\pur}= \dim \Gr^{P}_{d_{\g}+i}H_{\g,\pur}.
\end{equation*}
\end{lemma}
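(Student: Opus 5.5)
The plan is to deduce the palindromic property of $\Gr^{P}_{*}H_{\g,\pur}$ from the global relative hard Lefschetz (Lemma \ref{l:global HL}) and the freeness statements (Proposition \ref{p:perv Gr free} and Lemma \ref{l:GrP global reduces to GrP local}), by comparing Poincar\'e series in the perverse grading.

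First, I will relate the relative dimension $d$ of $f^{\el}$ to $d_{\g}$ and $\ov d$. The product formula \eqref{Fl to Ma} gives $d=\dim \cM_{a}=\ov d+d_{\g}$. Here $\cP'_{a}\times^{\L_{\g}}\Fl_{\g}$ has dimension $\dim A_{a}+\dim \Fl_{\g}$, because $\L_{\g}$ is discrete and the unipotent/affine part of $\cP'_a$ is absorbed by the local part. If instead this needs care, I will only use the symmetry center of the global filtration, which is $d$.

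Second, I will use Lemma \ref{l:global HL} at the stalk at $a$. Relative hard Lefschetz gives ${}^{p}\cK^{d-i}\cong {}^{p}\cK^{d+i}$ as perverse sheaves. Via \eqref{non-can decomp}, the stalks give $\dim\Gr^{P}_{d-i}\cK_{a}=\dim \Gr^{P}_{d+i}\cK_{a}$. Cupping with $\y_{\cM}^{i}$ is a morphism of mixed complexes compatible with weights up to Tate twist. It therefore induces an isomorphism on pure parts degree by degree, namely $\Gr^{P}_{d-i}\upH^{n}\cK_{a,\pur}\isom \Gr^{P}_{d+i}\upH^{n+2i}\cK_{a,\pur}$. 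I will argue this via the weight filtration being strict for the isomorphism of complexes $\cK^{d-i}$ stalk to $\cK^{d+i}(i)$ stalk. So the perverse Poincar\'e polynomial $p_{\cK}(q)=\sum_{j}\dim\Gr^{P}_{j}\cK_{a,\pur}\,q^{j}$ satisfies $p_{\cK}(q)=q^{2d}p_{\cK}(q^{-1})$.

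Third, I will divide by $\homog{*}{A_{a}}$. By Proposition \ref{p:perv Gr free}, $\Gr^{P}_{*}\cK_{a,\pur}$ is free over $\homog{*}{A_{a}}$, where $\homog{j}{A_{a}}$ lowers perverse degree by $j$. Hence $p_{\cK}(q)=p_{0}(q)\cdot\sum_{j}\dim\homog{j}{A_{a}}q^{-j}$, with $p_{0}$ the Poincar\'e series of the generators $(\Gr^{P}_{*}\cK_{a,\pur})\ot_{\homog{*}{A_{a}}}\Qlbar$. The factor $\sum_j \dim \homog{j}{A_a} q^{-j}=(1+q^{-1})^{2\ov d}$ satisfies $a(q)=q^{-2\ov d}a(q^{-1})$. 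Combined with the palindromy of $p_{\cK}$, this gives $p_{0}(q)=q^{2d+2\ov d}p_{0}(q^{-1})$, since a nonzero polynomial factor can be cancelled in the Laurent polynomial ring. By Lemma \ref{l:GrP global reduces to GrP local}, $p_{0}(q)=q^{2\ov d}h(q)$, where $h(q)=\sum_{i}\dim\Gr^{P}_{i}H_{\g,\pur}q^{i}$. Then $h(q)=q^{2d-2\ov d}h(q^{-1})=q^{2d_{\g}}h(q^{-1})$, which is the claim.

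The main obstacle is the first step, the precise equality $d=\ov d+d_{\g}$. I would prove it using the product formula together with equidimensionality of $\cM_{a}$ (flatness of $f^{\el}$). Alternatively, I would use Ng\^o's $\d$-invariant formula $\dim \cP_{a}=\ov d+\d_{a}$ with the local dimension $\dim \Fl_{\g}=\dim\cP_{a,x}+\dim\cB$ for parabolic structure, and check the bookkeeping of the flag directions at $x$. A secondary point is the passage to pure parts, which is handled by strictness of weights.
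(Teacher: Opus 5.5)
Your argument is correct and is essentially the paper's proof: you compare perverse Poincar\'e polynomials using the freeness of $\Gr^{P}_{*}\cK_{a,\pur}$ over $\homog{*}{A_{a}}$ and Lemma \ref{l:GrP global reduces to GrP local}, and then cancel the symmetric factor coming from $\homog{*}{A_{a}}$ against the symmetry of the global side given by Lemma \ref{l:global HL}. The paper uses $d=\ov d+d_{\g}$ and the passage of hard Lefschetz to $\cK_{a,\pur}$ without comment, so your extra justifications add detail; for the pure part, what is really needed is that $\Gr^{W}$ commutes with $\Gr^{P}$ (exactness of $\Gr^{W}$ on the Frobenius-equivariant filtration $P_{\bu}\cK_{a}$), not strictness of the isomorphism itself.
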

\begin{proof}
Define Laurent polynomials
\begin{eqnarray*}
&&\Phi(t)=\sum_{i\in \ZZ}(\dim \Gr^{P}_{d_{\g}+i}H_{\g,\pur}) t^{i}\\
&&\Psi(t)=\sum_{i\in \ZZ}(\dim \Gr^{P}_{d+i}K_{a,\pur}) t^{i}\\
&&A(t)=\sum_{i\in \ZZ}(\dim \homog{\ov d-i}{A_{a}})t^{i}=t^{-\ov d}(1+t)^{2\ov d}.
\end{eqnarray*}
By the freeness statement of Proposition \ref{p:perv Gr free} and \eqref{GrP isom}, we have
\begin{equation*}
\Psi(t)=\Phi(t)A(t).
\end{equation*}
Now $\Psi(t)$ satisfies $\Psi(t)=\Psi(t^{-1})$ by Lemma \ref{l:global HL}. Clearly $A(t)=A(t^{-1})$. We conclude that $\Phi(t)=\Phi(t^{-1})$, which is what we need.
\end{proof}

Recall the map $\t$ in \eqref{proj Ka to Hg}. Let $\t^{j}: \upH^{2\ov d+j}\cK_{a,\pur}\to H^{j}_{\g,\pur}(-\ov d)$ be the degree $2\ov d+j$ part of $\t$.  

Let $\y=c_{1}(\LL_{\Gr})\in \cohog{2}{\Gr}(1)$. Note that $\y$ in Definition \ref{d:perv fil} lies in $\cohog{2}{\Gr}$, while here we added in a Tate twist.

\begin{lemma}\label{l:eta local global}
The following diagram is commutative for any $j\in \ZZ$
\begin{equation}\label{diag eta tau}
\xymatrix{\upH^{2\ov d+j}\cK_{a,\pur} \ar[d]_{\t^{j}}\ar[r]^-{\cup \y_{\cM}} & \upH^{2\ov d+j+2}\cK_{a,\pur}(1)\ar[d]^{\t^{j+2}}\\
H^{j}_{\g,\pur}(-\ov d)\ar[r]^-{\cup \y} & H^{j+2}_{\g,\pur}(-\ov d+1)}
\end{equation}
\end{lemma}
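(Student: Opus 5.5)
The plan is to reduce the commutativity of \eqref{diag eta tau} to a projection formula for the Gysin map along $\pi: M_{a}\to \L_{\g}\bs\Fl_{\g}$. That formula requires $\y_{\cM}$, pulled back to $M_{a}$ via the homeomorphism \eqref{Fl to Ma}, to equal $\pi^{*}$ of a class on $\L_{\g}\bs\Fl_{\g}$ that pulls back to $\y$ on $\Fl_{\g}$. Once such a descent is established, the projection formula $\pi_{!}(\pi^{*}\xi\cup\a)=\xi\cup\pi_{!}\a$ gives commutativity before passing to $\pi_{0}(\cP'_{a})$-invariants and pure parts. Both of these operations are compatible with cup product by a weight-$2$ class twisted by $(1)$, so the diagram still commutes after passing to them. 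Everything therefore hinges on comparing $\y_{\cM}|_{M_{a}}$ with $\pi^{*}(\text{local class})$.

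The first step is to construct a $\L_{\g}$-equivariant line bundle on $\Fl_{\g}$ whose underlying line bundle is the pullback of $\LL_{\Gr}$. Lemma \ref{l:comp det} identifies $s_{\g}^{*}\LL_{\cM}$ with the pullback of $\LL_{\Gr}$. The action of $\cP'_{a}$ on $M_{a}$ does not preserve $\LL_{\cM}$, however, so one cannot simply descend $\LL_{\cM}$. The better approach is to compare classes in cohomology. Consider the composite
\begin{equation*}
\cP'_{a}\times\Fl_{\g}\to M_{a}\to\cM_{a}.
\end{equation*}
The pullback of $\y_{\cM}$ along it is a class in $\cohog{2}{\cP'_{a}\times\Fl_{\g}}$. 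By Künneth this class decomposes into components in $\cohog{2}{\cP'_{a}}\ot\cohog{0}{\Fl_{\g}}$, $\cohog{1}{\cP'_{a}}\ot\cohog{1}{\Fl_{\g}}$, and $\cohog{0}{\cP'_{a}}\ot\cohog{2}{\Fl_{\g}}$. By Lemma \ref{l:comp det}, the last component is $\y$.

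The cross terms are harmless for \eqref{diag eta tau}. The map $\t$ is Gysin pushforward along the fibers of $\pi$, i.e. integration over the abelian variety $A_{a}$, followed by projection to the top degree $\cohog{2\ov d}{A_{a}}$. Cup product with a class that involves a positive-degree cohomology class of the fiber $A_{a}$ raises the fiber degree. Such a product therefore lands beyond the top degree $2\ov d$, or it is killed when we truncate to the top cohomology sheaf of $\bR\pi_{*}\Qlbar$.

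To make this precise, I would work with the filtration $F$ from the proof of Lemma \ref{l:Ka free}. Cupping with the parts of $\y_{\cM}$ of fiber degree $1$ or $2$ shifts the fiber grading by $1$ or $2$. On the top fiber-degree piece, which is exactly where $\t$ is nonzero, these contributions vanish. The only contribution that survives is the one of base degree $2$. That contribution is the descent of $\y$ to $\L_{\g}\bs\Fl_{\g}$, which exists on the pure part by Lemma \ref{l:}.

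The main obstacle is making this fiber-degree argument rigorous on the nontrivial torsor $M_{a}$, rather than on a product. The Leray filtration of $\pi$ is multiplicative. Because the top fiber degree is maximal, $F^{\bullet}$ in the total space has the property that cup product with a class of Leray filtration $\ge$ base-degree $2$ acts on the top quotient through its image in $\cohog{2}{\L_{\g}\bs\Fl_{\g}}$. It then remains to identify that image with $\y$. I would do this by restricting along $s_{\g}$ and the projection $\Fl_{\g}\to\L_{\g}\bs\Fl_{\g}$, using Lemma \ref{l:comp det} again and the injectivity of pullback on pure parts given by Lemma \ref{l:}.
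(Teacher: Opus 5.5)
\textbf{There is a genuine gap: the step where you discard the cross terms fails.}

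Your outline follows the paper's argument. You pull $\y_{\cM}$ back to a product with $\Fl_{\g}$ (you use $\cP'_{a}\times\Fl_{\g}$, the paper uses $A_{a}\times\Fl_{\g}$). You identify the fibre-degree-$0$ K\"unneth component with $\y$ via Lemma~\ref{l:comp det}. Then you discard the components of fibre degree $1$ and $2$, and that is where it breaks.

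Cupping with a class of fibre degree $k>0$ raises fibre degree by $k$. This makes such terms invisible to a map that \emph{projects onto the bottom} fibre degree, such as restriction along the section $s_{\g}$. For that map, Lemma~\ref{l:comp det} gives $s_{\g}^{*}(\y_{\cM}\cup x)=\y\cup s_{\g}^{*}x$ exactly. But $\t$ is the Gysin map, i.e.\ projection onto the \emph{top} fibre degree $2\ov d$.

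A class of fibre degree $2\ov d-2$ (resp.\ $2\ov d-1$) is killed by $\t$. Cupping with the fibre-degree-$2$ (resp.\ $(1,1)$) component of $\y_{\cM}$ moves it into fibre degree $2\ov d$, where $\t$ does see it. Your check that these terms vanish on the top piece looks at the wrong inputs; what matters is lower pieces being pushed up into the top piece.

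In your Leray language, the multiplicativity argument needs $\y_{\cM}\in F^{2}$, i.e.\ that $\y_{\cM}$ restricts to zero on the fibres of $\pi$. It does not. A fibre of $\pi$ is a $\cP'_{a}$-orbit, and the orbit map from its abelian part to $\cM^{\Hit}_{a}$ is finite, since stabilizers are affine. So by Theorem~\ref{th:det ample Hit}, $\LL_{\cM}$ restricted to a fibre is anti-ample, and its class $c\in\cohog{2}{A_{a}}$ is nonzero. Lemma~\ref{l:global HL} relies on this same relative anti-ampleness, so it cannot be normalized away.

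\textbf{The square \eqref{diag eta tau} is false as stated once $\ov d\ge 1$,} which holds for $\deg\cL$ large. Take $j=-2$ and $x=\y_{\cM}^{\ov d-1}\in\upH^{2\ov d-2}\cK_{a,\pur}$. Then $\t^{-2}(x)\in H^{-2}_{\g,\pur}=0$. On the other hand, $\t^{0}(\y_{\cM}\cup x)=\t^{0}(\y_{\cM}^{\ov d})$ is a nonzero multiple of the fibrewise degree $\int_{A_{a}}c^{\ov d}\neq 0$.

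The toy model $M=A\times Y$ with $\y_{\cM}=c\ot 1+1\ot\y$ shows the same thing. There $\t(x\cup\y_{\cM})-\t(x)\cup\y$ is the top-degree part of $x'\cup(c\ot 1)$, where $x'$ is the fibre-degree-$(2\ov d-2)$ component of $x$.

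\textbf{The paper's own proof has the same gap.} It asserts that the K\"unneth square \eqref{diag Kunneth} commutes ``from'' the shape of $u^{*}\y_{\cM}$. The component in $\cohog{2}{A_{a}}\ot 1$ breaks that square, with the same test class. So no refinement of this argument can prove the lemma as written.

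What does hold is the version with $\t$ replaced by restriction along $s_{\g}$. The use of the lemma in Proposition~\ref{p:HL} therefore needs a different argument, one that keeps track of the polarization term rather than dropping it, for instance through the $\homog{*}{A_{a}}$-module structure.
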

\begin{proof} Consider the projection $u: A_{a}\times \Fl_{\g}\to \cM_{a}$, and $\wt\t^{j}:  \cohog{2\ov d+j}{A_{a}\times \Fl_{\g}}\to \cohog{2\ov d}{A_{a}}\ot \cohog{j}{\Fl_{\g}}$ be the projection from the K\"unneth formula. By Lemma \ref{l:comp det}, we have
\begin{equation*}
u^{*}\y_{\cM}\in 1\ot \y+ \cohog{1}{A_{a}}\ot \cohog{1}{\Fl_{\g}}+\cohog{2}{A_{a}}\ot 1
\end{equation*}
under the K\"unneth decomposition of $\cohog{2}{A_{a}\times \Fl_{\g}}$. From this we get a commutative diagram
\begin{equation}\label{diag Kunneth}
\xymatrix{\cohog{2\ov d+j}{A_{a}\times \Fl_{\g}}_{\pur} \ar[d]_{\wt\t^{j}}\ar[r]^-{\cup u^{*}\y_{\cM}} & \cohog{2\ov d+j+2}{A_{a}\times \Fl_{\g}}_{\pur}(1)\ar[d]^{\wt\t^{j+2}}\\
\cohog{2\ov d}{A_{a}}\ot \cohog{j}{\Fl_{\g}}_{\pur}\ar[r]^-{\cup (1\ot \y)} & \cohog{2\ov d}{A_{a}}\ot\cohog{j+2}{\Fl_{\g}}_{\pur}(1)}
\end{equation}

The pullback map $u^{*}$ induces a map from diagram \eqref{diag eta tau} to diagram \eqref{diag Kunneth}. For the bottom row in \eqref{diag Kunneth}, we identify $\cohog{2\ov d}{A_{a}}$ with $\Qlbar(-\ov d)$, so that the map from the bottom row of \eqref{diag eta tau} can be identified with the inclusion of $H^{j}_{\g,\pur}=\cohog{j}{\Fl_{\g}}^{\L_{\g}}_{\pur}\incl \cohog{j}{\Fl_{\g}}_{\pur}$. The cube formed by the map from \eqref{diag eta tau} to \eqref{diag Kunneth} has all the faces commutative except possibly for the face \eqref{diag eta tau}. By the injectivity of the lower right corners, the remaining face has to commute, i.e., \eqref{diag eta tau} is commutative.
\end{proof}

\begin{prop}\label{p:HL} The perverse filtration on $H_{\g,\pur}$ satisfies hard Lefschetz with respect to $\y=c_{1}(\LL_{\Gr})\in \cohog{2}{\Gr_{G}}(1)$. More precisely, cupping with $\y$ takes $P_i H_{\g,\pur}$ to $P_{i+2} H_{\g,\pur}(1)$, and induces an isomorphism for every $0\le i\le d_\g$
\begin{equation}\label{cup eta i}
\cup \y^{i}: \Gr^P_{d_\g- i} H_{\g,\pur}\isom \Gr^P_{d_\g+i}H_{\g,\pur}(i).
\end{equation}  
\end{prop}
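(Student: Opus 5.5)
We deduce the local statement from the global relative hard Lefschetz (Lemma \ref{l:global HL}) through the map $\t$, using the commutative square of Lemma \ref{l:eta local global} and the numerical symmetry of Lemma \ref{l:GrPH palindromic}.

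\emph{Step 1: $\y$ raises perversity by $2$.} Let $v\in P_iH_{\g,\pur}$. By definition $v=\t(w)$ for some $w\in P_{i+2\ov d}\cK_{a,\pur}$. On the global side, cupping with $\y_{\cM}$ is induced by a map of complexes $\cK\to\cK[2](1)$. It therefore sends ${}^{p}\t_{\le \dim\cA+i}$ into ${}^{p}\t_{\le\dim\cA+i+2}$ after the shift, so $\y_{\cM}w\in P_{i+2+2\ov d}\cK_{a,\pur}(1)$. Lemma \ref{l:eta local global} gives $\y v=\t(\y_{\cM}w)$, hence $\y v\in P_{i+2}H_{\g,\pur}(1)$. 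In particular $\y^i$ induces the map \eqref{cup eta i} on associated graded pieces, and this map is compatible with $\Gr^P_*\t$.

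\emph{Step 2: surjectivity.} Set $j=d_\g+i$. On the global side, \eqref{cup eta i global} shows that $\y_{\cM}^{i'}:\Gr^P_{d-i'}\cK_a\to\Gr^P_{d+i'}\cK_a(i')$ is an isomorphism on stalks. Here the stalk of a perverse cohomology sheaf is identified with $\Gr^P$ through the decomposition \eqref{non-can decomp}, and the identification $\Gr^P_i\cK_a\cong({}^p\cK^{i})_a[\ldots]$ is canonical, so the stalk isomorphism really is the map induced by $\y_{\cM}^{i'}$ on graded pieces. The same holds after passing to the pure quotient, since pure parts are functorial and the isomorphism respects weights. We need the relative dimensions to match, $d=d_\g+\ov d$. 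This follows from the product formula \eqref{prod form}, which gives $\dim\cM_a=\dim A_a+d_\g$. Take $i'=i$. Then $\Gr^P_{d+i}\cK_{a,\pur}$ is the image of $\Gr^P_{d-i}\cK_{a,\pur}$ under $\y_{\cM}^i$. Apply the surjection $\Gr^P_*\t$, whose index shift by $2\ov d$ matches $d\pm i=2\ov d+(d_\g\pm i)-\ov d$; the bookkeeping must be checked here. By Step 1 we conclude that $\y^i:\Gr^P_{d_\g-i}H_{\g,\pur}\to\Gr^P_{d_\g+i}H_{\g,\pur}(i)$ is surjective.

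One might worry that the global Lefschetz isomorphism only relates $\Gr^P_{d-i}$ and $\Gr^P_{d+i}$ while the local indices $d_\g\pm i$ correspond to global indices $d\pm i-\ov d$, which are not symmetric about $d$. If Step 2 runs into this mismatch, I will argue instead via the freeness of Proposition \ref{p:perv Gr free}. Choose a free homogeneous basis of $\Gr^P_*\cK_{a,\pur}$ over $\homog{*}{A_a}$. The operator $\y_{\cM}$ commutes with the $\homog{*}{A_a}$-action up to lower terms, and we can pass to $-\ot_{\homog{*}{A_a}}\Qlbar$ using \eqref{GrP isom}. Reducing the surjectivity of $\y^i$ on the quotient to the global statement amounts to showing that the image of $\y_{\cM}^i$ in top perversity generates the quotient modulo the augmentation ideal. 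Surjectivity after applying $\ot\Qlbar$ is preserved by right exactness, provided $\y_{\cM}$ is $\homog{*}{A_a}$-linear on $\Gr^P$. Establishing this linearity is the main obstacle. I would take it from \cite[\S7.7]{NgoFL}: the $\homog{*}{A_a}$-action comes from the $\cP_a$-action, and $\LL_{\cM}$ is $\cP_a$-equivariant up to a character, so the action commutes with $c_1$ on perverse graded pieces.

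\emph{Step 3: isomorphism.} Lemma \ref{l:GrPH palindromic} gives $\dim\Gr^P_{d_\g-i}H_{\g,\pur}=\dim\Gr^P_{d_\g+i}H_{\g,\pur}$. A surjection between finite-dimensional spaces of equal dimension is an isomorphism, which proves \eqref{cup eta i}.
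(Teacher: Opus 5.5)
Your Steps 1 and 3 are fine and agree with the paper: the square of Lemma \ref{l:eta local global}, the surjectivity of $\Gr^P_*\tau$, and the dimension equality of Lemma \ref{l:GrPH palindromic}. Step 2, which is the heart of the proof, has a genuine gap, located exactly in the bookkeeping you flagged.

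By definition, $P_kH_{\g,\pur}$ is (up to shift and twist) the $\tau$-image of $P_{k+2\ov d}\cK_{a,\pur}$. So the pieces $\Gr^P_{d_\g\pm i}H_{\g,\pur}$ are hit by $\Gr^P_{2\ov d+d_\g\pm i}\cK_{a,\pur}=\Gr^P_{d+\ov d\pm i}\cK_{a,\pur}$, not by $\Gr^P_{d\pm i}\cK_{a,\pur}$. Applying relative hard Lefschetz with $i'=i$, as you do, therefore only proves that $\eta^i:\Gr^P_{d_\g-\ov d-i}H_{\g,\pur}\to\Gr^P_{d_\g-\ov d+i}H_{\g,\pur}$ is surjective.

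Your ``worry'' paragraph notices the asymmetry but gets its sign wrong: the global indices are $d\pm i+\ov d$, not $d\pm i-\ov d$. The sign is decisive. Symmetric pairs centered below the middle index $d$ only inherit injectivity from hard Lefschetz, while pairs centered above $d$ inherit surjectivity.

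The missing observation, which is the whole of the paper's argument, is the following (Tate twists suppressed). If $m\ge d$, then $\eta_\cM^{i}:\Gr^P_{m-i}\cK_{a}\to\Gr^P_{m+i}\cK_{a}$ is surjective. Indeed, the Lefschetz isomorphism $\eta_\cM^{m+i-d}:\Gr^P_{2d-m-i}\cK_{a}\isom\Gr^P_{m+i}\cK_{a}$ factors as $\eta_\cM^{i}\circ\eta_\cM^{m-d}$ through $\Gr^P_{m-i}\cK_{a}$; if $m+i>2d$ the target is zero. Take $m=2\ov d+d_\g=d+\ov d\ge d$. Then your Step 1 square and the surjectivity of the vertical maps $\Gr^P_*\tau$ give the surjectivity of \eqref{cup eta i} immediately.

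The fallback should be dropped, for three reasons. First, its key input is not correct in general: you claim $\eta_\cM$ is $\homog{*}{A_a}$-linear on $\Gr^P$ because $\LL_\cM$ is $\cP_a$-equivariant up to a character. But translates of an (anti-)ample bundle on an abelian variety differ by nontrivial elements of $\Pic^0$. Already for an abelian variety acting on itself, contraction by $\homog{1}{A}$ does not commute with cup product by an ample class, even on graded pieces. Second, linearity is unnecessary: the commutative square, together with surjectivity of $\Gr^P_*\tau$, transfers surjectivity from the top row to the bottom row without any module structure. Third, the fallback would still need a global surjectivity statement at indices centered at $d+\ov d$, which is precisely the step that is missing.
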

\begin{proof} By Lemma \ref{l:GrPH palindromic}, $\dim \Gr^{P}_{d_{\g}-i}H_{\g,\pur}= \dim \Gr^{P}_{d_{\g}+i}H_{\g,\pur}$, therefore it suffices to show that \eqref{cup eta i} is surjective.

Iterating Lemma \ref{l:eta local global}, we get that for any $i,j\ge0$ a commutative diagram
\begin{equation*}
\xymatrix{\upH^{2\ov d+j}\cK_{a,\pur} \ar[d]_{\t^{j}}\ar[r]^-{\cup \y^{i}_{\cM}} & \upH^{2\ov d+j+2i}\cK_{a,\pur}(i)\ar[d]^{\t^{j+2i}}\\
H^{j}_{\g,\pur}(-\ov d)\ar[r]^-{\cup \y^{i}} & H^{j+2i}_{\g,\pur}(-\ov d+i)}
\end{equation*}
By definition, $\t^{j}$ is compatible with the filtrations $P_{\bu}$ on $\cK_{a,\pur}$ and on $H_{\g,\pur}$ up to an index shift by $2\ov d$. Passing to the associated graded, we have a commutative diagram
\begin{equation}\label{GrP eta local global}
\xymatrix{\Gr^{P}_{2\ov d+d_{\g}-i}\upH^{2\ov d+j}\cK_{a,\pur} \ar[d]_{\Gr^{P}_{d_{\g}-i}\t^{j}}\ar[r]^-{\cup \y^{i}_{\cM}} & \Gr^{P}_{2\ov d+d_{\g}+i}\upH^{2\ov d+j+2i}\cK_{a,\pur}(i)\ar[d]^{\Gr^{P}_{d_{\g}+i}\t^{j+2i}}\\
\Gr^{P}_{d_{\g}-i}H^{j}_{\g,\pur}(-\ov d)\ar[r]^-{\cup \y^{i}} & \Gr^{P}_{d_{\g}+i}H^{j+2i}_{\g,\pur}(-\ov d+i)}
\end{equation}
The upper row in \eqref{GrP eta local global} is surjective by the hard Lefschetz property for $\cK$ (hence for $\cK_{a,\pur}$) proved in Lemma \ref{l:global HL}: because the mid-point of the degrees $2\ov d+d_{\g}-i$ and $2\ov d+d_{\g}+i$ is $2\ov d+d_{\g}\ge \ov d+d_{\g}=d$ (the middle index for the filtration $P_{\bu}\cK_{a}$). By construction, the vertical maps $\Gr^{P}_{d_{\g}-i}\t^{j}$ and $\Gr^{P}_{d_{\g}+i}\t^{j+2i}$ are surjective. Therefore the bottom row in \eqref{GrP eta local global} is also surjective. This finishes the proof.
\end{proof}

\section{Top cohomology of $\Fl_\g$ versus total cohomology of $\cB_e$}

\subsection{Reduction types} 


We recall the notion of (minimal) reduction types of a topologically nilpotent $\g\in \frg\lr{t}$ from \cite{Y-RT}. We have the evaluation map
\begin{equation*}
\ev_\g: \Gr_{\g}\to [\cN/G]
\end{equation*}
sending $g L^+G\in \Gr_\g$ to the nilpotent orbit of $\Ad(g^{-1})\g$ mod $t$. For each nilpotent orbit $\cO$ in $\frg$, let $\Gr_{\g,\cO}\subset \Gr_{\g}$ be the preimage of $[\cO/G]$ under $\ev_\g$. The {\em reduction type} $\RT(\g)$ of $\g$ is the set of nilpotent orbits $\cO$ such that $\Gr_{\g, \cO}\ne\vn$. The {\em minimal reduction type $\RT_{\min}(\g)$ } of $\g$ is the set of minimal elements in $\RT(\g)$.

For any nilpotent orbit $\cO$ and $e\in \cO$, let $\cB_e\subset \cB$ be its Springer fiber and let $d_{\cO}=\dim \cB_{e}$. The component group $A_e=\pi_0(G_e)$ of the centralizer $G_{e}$ acts on $\cohog{*}{\cB_e}$, whose invariants we denote by
\begin{equation*}
H_{\cO}=\cohog{*}{\cB_e}^{A_e}.
\end{equation*}
As the notation suggests, $H_{\cO}$ as a graded $W$-module is canonically independent of the choice of $e$. Denote the cohomologically graded pieces of $H_{\cO}$ by $H^{j}_{\cO}$. The inclusion $i_e: \cB_e\incl \cB$ induces a restriction map on cohomology 
\begin{equation*}
i_e^*: \cohog{*}{\cB}\to H_{\cO}
\end{equation*}
whose image we denote by $'H_{\cO}$. The top degree piece of $'H_{\cO}$ is the irreducible $W$-module $E_{\cO}$ corresponding to the nilpotent orbit $\cO$ and the trivial local system under Springer correspondence.

 
\subsection{$E_{\g}$ as a $W$-module}
Consider the projection map 
\begin{equation*}
\pi_\g: \Fl_\g\to \Gr_\g.
\end{equation*}
Let $\Fl_{\g,\cO}=\pi_{\g}^{-1}(\Gr_{\g,\cO})\subset \Fl_{\g}$. Then $\pi_{\g}$ restricts to a fibration $\pi_{\g,\cO}: \Fl_{\g,\cO}\to \Gr_{\g,\cO}$ whose fibers are isomorphic to $\cB_{e}$ (for $e\in \cO$). From this we get the inequality \eqref{dim GrO}. We call $\cO\in \RT(\g)$ {\em $\g$-relevant} if the following equivalent conditions are satisfied:
\begin{enumerate}
\item $\dim\Gr_{\g,\cO}=d_{\g}-d_{\cO}$.
\item $\dim \Fl_{\g,\cO}=d_{\g}$.
\item $\Fl_{\g,\cO}$ contains an open subset of $\Fl_{\g}$.
\end{enumerate}
The equivalence between (1) and (2) is clear; the equivalence between (2) and (3) follows from the fact that $\Fl_{\g}$ is equidimensional (\cite[\S4, Proposition 1]{KL}). 

We denote by $\RT_{\rel}(\g)\subset \RT(\g)$ the set of $\g$-relevant nilpotent orbits. For each $\cO\in \RT_{\rel}(\g)$, choose $e\in \cO$ to identify $\cO/G$ with $\{e\}/G_{e}$. Define $\wt\Gr_{\g,e}$ by the Cartesian diagram
\begin{equation*}
\xymatrix{\wt\Gr_{\g,e}\ar[r]\ar[d]^{\nu_{\g,\cO}} & \{e\}/G_{e}^{\c}\ar[d]\\
\Gr_{\g,\cO}\ar[r]^-{\ev_{\g,\cO}} & \cO/G\cong \{e\}/G_{e}}
\end{equation*}
By construction, $\nu_{\g,\cO}: \wt\Gr_{\g,e}\to \Gr_{\g,\cO}$ is a right $A_{e}=\pi_{0}(G_{e})$-torsor. Moreover, $\wt\Gr_{\g,e}$ can be identified with the sub-ind-scheme of $LG/((L^{+}G)^{1}G^{\c}_{e})$ (where $(L^{+}G)^{1}=\ker(L^{+}G\to G)$) consisting of $g\in LG/((L^{+}G)^{1}G^{\c}_{e})$ such that $\Ad(g^{-1})\g\in e+t\frg\tl{t}$. From this description, we see that the $LG_{\g}$-action on $\Gr_{\g,\cO}$ lifts to $\wt\Gr_{\g,e}$, commuting with the $A_{e}$-action.

\begin{lemma}\label{l:decomp top}
As an $A_{\g}\times W$-module, we have a canonical isomorphism
\begin{equation}\label{decomp top}
\homog{2d_{\g}}{\Fl_{\g}}\cong \bigoplus_{\cO\in \RT_{\rel}(\g)}(\hBM{2d_{\g}-2d_{\cO}}{\wt\Gr_{\g, e}}\ot \homog{2d_{\cO}}{\cB_{e}})_{A_{e}}.
\end{equation}
Here we choose an element $e\in \cO$ for each $\cO\in \RT_{\rel}(\g)$. The  $A_{e}$-coinvariants on the right side is taken with respect to the diagonal action. The $A_{\g}$-action on the right side is on the $\hBM{2d_{\g}-2d_{\cO}}{\wt\Gr_{\g, e}}$ factor, and $W$ acts on the $\homog{2d_{\cO}}{\cB_{e}}$ factor.
\end{lemma}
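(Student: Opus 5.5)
The plan is to compute the top Borel--Moore homology of $\Fl_{\g}$ from the stratification by reduction types, and then to identify the $W$-action on each stratum with the Springer action on the fiber $\cB_{e}$.

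\textbf{Stratification and top homology.} The strata $\Fl_{\g,\cO}=\pi_{\g}^{-1}(\Gr_{\g,\cO})$, for $\cO\in\RT(\g)$, form a finite stratification of $\Fl_{\g}$. They are locally closed and $LG_{\g}$-stable, and the closure relations follow the closure order on nilpotent orbits, since $\ev_{\g}$ is continuous. Since $\Fl_{\g}$ is equidimensional of dimension $d_{\g}$, the group $\homog{2d_{\g}}{\Fl_{\g}}$ has a basis given by the fundamental classes of the irreducible components. Every component has a dense open subset contained in exactly one stratum, and that stratum must have dimension $d_{\g}$, i.e.\ it is $\g$-relevant. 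Running the long exact sequences of Borel--Moore homology along the stratification, all strata of dimension $<d_{\g}$ drop out. This gives a canonical isomorphism
\[
\homog{2d_{\g}}{\Fl_{\g}}\cong\bigoplus_{\cO\in\RT_{\rel}(\g)}\hBM{2d_{\g}}{\Fl_{\g,\cO}}.
\]
It is $A_{\g}$-equivariant because the stratification is $LG_{\g}$-stable. To make this rigorous for ind-schemes, I would pass to the colimit over finite-type $\L_{\g}$-translates of closed subschemes; taking top degree commutes with this colimit.

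\textbf{Each relevant stratum.} Pull $\pi_{\g,\cO}$ back along the $A_{e}$-torsor $\nu_{\g,\cO}$. Over $\wt\Gr_{\g,e}$, the point $\Ad(g^{-1})\g$ mod $t$ equals $e$ up to $G^{\c}_{e}$. The fiber of $\Fl_{\g,\cO}$ over $gL^{+}G$ consists of the Borels containing $\Ad(g^{-1})\g\bmod t$, which is identified with $\cB_{e}$ once the reduction has been trivialized to $e$. So we expect an isomorphism
\[
\Fl_{\g,\cO}\cong \wt\Gr_{\g,e}\times^{A_{e}}\cB_{e}\quad\text{up to a connected twist.}
\]
More precisely, $\wt\Gr_{\g,e}\times\cB_{e}$ maps to $\Fl_{\g,\cO}$ as a quotient by $G^{\c}_{e}$-twisting followed by $A_{e}$. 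Because $G_{e}^{\c}$ is connected, it acts trivially on homology. Top-degree Borel--Moore homology of a fibration with equidimensional fibers is computed by the Künneth formula in top degrees: the local system of top homology of the fibers, tensored with top BM homology of the base. The local system $\homog{2d_{\cO}}{\cB_{e}}$ becomes trivial over $\wt\Gr_{\g,e}$, up to the $A_{e}$-action. Taking the $A_{e}$-quotient therefore turns into taking coinvariants of the diagonal action, which gives
\[
\hBM{2d_{\g}}{\Fl_{\g,\cO}}\cong(\hBM{2d_{\g}-2d_{\cO}}{\wt\Gr_{\g,e}}\ot\homog{2d_{\cO}}{\cB_{e}})_{A_{e}}.
\]
The coefficients have characteristic $0$, so coinvariants and invariants agree.

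\textbf{$W$-equivariance (main obstacle).} The remaining step is to show that Lusztig's $W$-action on $\homog{2d_{\g}}{\Fl_{\g}}$ corresponds to the Springer action on the $\cB_{e}$ factor. I would use the construction already used in Lemma \ref{l:approx equiv}: the finite Weyl group acts through the Cartesian square
\[
\Fl_{\g}\to\Gr_{\g}\times_{[\frg/G]}[\wt\frg/G],
\]
where $\wt\frg\to\frg$ is the Grothendieck simultaneous resolution and the map $\Gr_{\g}\to[\frg/G]$ is evaluation of $\Ad(g^{-1})\g$ mod $t$. Hence $W$ acts on $\bR\pi_{\g*}\Qlbar$ by base change of the Springer action on the Grothendieck--Springer sheaf. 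Restricting to the stratum over $\cO$, the stalk is the Springer action on $\cohog{*}{\cB_{e}}$. On top homology the restriction to the open dense union of relevant strata is injective, and it is compatible with this sheaf-level action. Two points need care here. First, one must check that the sheaf-theoretic $W$-action agrees with Lusztig's action restricted to $W\subset\Wa$; I would argue this via the parahoric description with the simple reflections $s$, exactly as in Lemma \ref{l:approx equiv}. Second, one must check that the normalization, with or without the sign character, matches the Springer convention for which the top degree of $'H_{\cO}$ is $E_{\cO}$.
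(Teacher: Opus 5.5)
Your steps 1 and 2 match the paper's proof. You stratify by reduction types, use fundamental classes of components, and compute each relevant stratum. The paper does the last step via $\Fl_{\g,\cO}\cong(\wh\Gr_{\g,e}\times\cB_{e})/G_{e}$, where $\wh\Gr_{\g,e}\to\wt\Gr_{\g,e}$ is a $G_{e}^{\c}$-torsor. That $G_e$-torsor is the precise form of your ``up to a connected twist''. There is no map out of $\wt\Gr_{\g,e}\times\cB_{e}$ itself, because a point of $\wt\Gr_{\g,e}$ is only a coset modulo $G_{e}^{\c}$.

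\textbf{The gap is the $W$-step.} To conclude, you need the fundamental-class splitting $\homog{2d_{\g}}{\Fl_{\g}}\cong\bigoplus_{\cO}\hBM{2d_{\g}}{\Fl_{\g,\cO}}$ to be $W$-stable. Injectivity of restriction to a dense open set, plus the sheaf-level description on each stratum, does not give this. For a stratum that is not open, the only natural projection onto $\hBM{2d_{\g}}{\Fl_{\g,\cO}}$ is the fundamental-class one, and it is not $W$-equivariant. Here is an example.
\begin{itemize}
\item Take $G=\SL_{2}$ and $\g=\mathrm{diag}(t,-t)$, so $d_{\g}=1$. Then $\Fl_{\g}$ is a chain of $\mathbb{P}^{1}$'s. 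They alternate between fibers $V$ of $\pi_{\g}$ over $\Gr_{\g,\{0\}}$ and components $H$ lying generically over the regular stratum.
\item The sign part of $\bR\pi_{\g*}\Qlbar$ is $\ev_{\g}^{*}\delta_{0}[-2]$. It is embedded via the Thom class of the zero section of $T^{*}\mathbb{P}^{1}$.
\item Hence $s[V]=-[V]$. On the other hand, $s[H]-[H]$ is a nonzero combination of the $[V]$'s meeting $H$, with coefficients the local intersection numbers.
\end{itemize}
The paper's proof only asserts that this decomposition is $A_{\g}\times W$-equivariant, so it does not supply the argument either. The $A_{\g}$-equivariance is fine.

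\textbf{What the stratification does give is an $A_{\g}\times W$-stable filtration.}
\begin{itemize}
\item For each closed union $Z$ of strata, $\hBM{2d_{\g}}{Z}\subset\homog{2d_{\g}}{\Fl_{\g}}$ is stable. This follows from naturality of $i_{*}i^{!}\to\mathrm{id}$ applied to the $W$-equivariant Verdier dual of $\bR\pi_{\g*}\Qlbar$.
\item The graded pieces are $\hBM{2d_{\g}}{\Fl_{\g,\cO}}$ for $\cO\in\RT_{\rel}(\g)$. On each, $W$ acts through the $\cB_{e}$-factor of your step 2. This is where your base-change argument correctly applies.
\end{itemize}

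\textbf{The filtration then splits canonically.} The $W$-types in $\homog{2d_{\cO}}{\cB_{e}}$ are the Springer representations attached to pairs $(\cO,\phi)$. The Springer correspondence is injective, so pieces for distinct $\cO$ have disjoint isotypic types. Isotypic projection therefore gives the unique $W$-stable splitting. It is $A_{\g}$-equivariant, because $A_{\g}$ commutes with $W$ and preserves the filtration. This splitting differs from the fundamental-class one, but it proves the lemma, and it is all Corollary \ref{c:rel crit} uses.

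A minor point: the union of relevant strata need not be open a priori. Remove the closure of the non-relevant strata instead.
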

\begin{proof} The ind-scheme $\Fl_{\g}$ has a stratification $\Fl_{\g,\cO}$ indexed by $\cO\in \RT(\g)$. Let $\wh\Gr_{\g,e}\to \Gr_{\g,\cO}$ be the total space of the $G_{e}$-torsor classified by the evaluation map $\Gr_{\g,\cO}\to \cO/G\cong \{e\}/G_{e}$. In other words, $\wh\Gr_{\g,e}=\{g\in LG/(L^{+}G)^{1}|\Ad(g^{-1})\g\in e+t\frg\tl{t}\}$. Then the multiplication map $LG/(L^{+}G)^{1}\times G/B\to LG/\bI=\Fl$ restricts to a map $\wh\Gr_{\g,e}\times \cB_{e}\to \Fl_{\g,\cO}$, inducing an isomorphism
\begin{equation*}
\Fl_{\g,\cO}\cong (\wh\Gr_{\g,e}\times \cB_{e})/G_{e}.
\end{equation*}
Note that $\wh\Gr_{\g,e}\to \wt\Gr_{\g,e}$ is a $G_{e}^{\c}$-torsor, therefore $\dim\wh\Gr_{\g,e}\le d_{\g}-d_{\cO}+\dim G_{e}=d_{\g}+d_{\cO}+r$ (where $r$ is the rank of $G$), with equality if and only if $\cO$ is $\g$-relevant. Taking top homology we get
\begin{equation*}
\hBM{2d_{\g}}{\Fl_{\g,\cO}}\cong (\hBM{2(d_{\g}+d_{\cO}+r)}{\wh\Gr_{\g, e}}\ot \homog{2d_{\cO}}{\cB_{e}})_{A_{e}}.
\end{equation*}
The $G_{e}^{\c}$-torsor $\wh\Gr_{\g,e}\to \wt\Gr_{\g,e}$ induces an $A_{\g}\times A_{e}$-equivariant isomorphism of top homology groups, therefore we can rewrite the above as
\begin{equation}\label{top hom Fl gO}
\hBM{2d_{\g}}{\Fl_{\g,\cO}}\cong (\hBM{2d_{\g}-2d_{\cO}}{\wt\Gr_{\g, e}}\ot \homog{2d_{\cO}}{\cB_{e}})_{A_{e}}.
\end{equation}
Finally we have a canonical decomposition
\begin{equation*}
\hBM{2d_{\g}}{\Fl_{\g}}=\bigoplus_{\cO\in \RT_{\rel}(\g)}\hBM{2d_{\g}}{\Fl_{\g,\cO}}
\end{equation*}
because both the left side and each term on the right are spanned by the fundamental classes of top-dimensional irreducible components. Moreover, this decomposition is equivariant under $A_{\g}\times W$. Combined with \eqref{top hom Fl gO} we get \eqref{decomp top}.

\end{proof}
 
Taking $A_{\g}$-coinvariants in \eqref{decomp top},  we get:
\begin{cor}
We have a canonical isomorphism of $W$-modules
\begin{equation}\label{decomp top Mg}
\homog{2d_{\g}}{\Fl_{\g}}_{A_{\g}}\cong \bigoplus_{\cO\in \RT_{\rel}(\g)}(\hBM{2d_{\g}-2d_{\cO}}{\wt\Gr_{\g, e}}_{A_{\g}}\ot \homog{2d_{\cO}}{\cB_{e}})_{A_{e}}.
\end{equation}
\end{cor}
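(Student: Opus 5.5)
The plan is to take $A_{\g}$-coinvariants of both sides of the $A_{\g}\times W$-equivariant isomorphism \eqref{decomp top} in Lemma \ref{l:decomp top}, and then move the coinvariants inside the tensor product.

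First, I would check that the two group actions on the right side of \eqref{decomp top} commute. On each summand, $A_{\g}$ acts only on the factor $\hBM{2d_{\g}-2d_{\cO}}{\wt\Gr_{\g,e}}$. That action comes from the $LG_{\g}$-action on $\wt\Gr_{\g,e}$, which was observed above to commute with the right $A_{e}$-action. So $A_{\g}\times A_{e}$ acts on the tensor product $\hBM{2d_{\g}-2d_{\cO}}{\wt\Gr_{\g,e}}\ot \homog{2d_{\cO}}{\cB_{e}}$, with $A_{e}$ acting diagonally and $A_{\g}$ acting on the first factor only.

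Second, I would use that coinvariants for commuting actions can be taken in either order. For such an $A_{\g}\times A_{e}$-module $M$,
\[
(M_{A_{e}})_{A_{\g}}=M_{A_{\g}\times A_{e}}=(M_{A_{\g}})_{A_{e}}.
\]
Since $A_{\g}$ acts trivially on the second factor, taking $A_{\g}$-coinvariants commutes with tensoring by $\homog{2d_{\cO}}{\cB_{e}}$. This holds because $(-)_{A_{\g}}$ is a cokernel, i.e. a colimit, and $-\ot V$ is exact over a field. Hence
\[
(\hBM{}{\wt\Gr_{\g,e}}\ot \homog{}{\cB_{e}})_{A_{\g}}\cong \hBM{}{\wt\Gr_{\g,e}}_{A_{\g}}\ot \homog{}{\cB_{e}},
\]
and the induced $A_{e}$-action on the right side is again diagonal. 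Coinvariants also commute with finite direct sums, and $\RT_{\rel}(\g)$ is finite since there are finitely many nilpotent orbits. Combining these gives \eqref{decomp top Mg}.

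Third, I would verify $W$-equivariance and canonicity. $W$ acts only on the $\homog{2d_{\cO}}{\cB_{e}}$ factor, and this action commutes with $A_{e}$ and $A_{\g}$, so all the identifications above are $W$-equivariant. The isomorphism is canonical because \eqref{decomp top} is canonical and all further steps are functorial.

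There is no real obstacle here; the only point needing care is the bookkeeping of the commuting actions, in particular that the $A_{\g}$-action on the left side of \eqref{decomp top} matches the one on the $\wt\Gr_{\g,e}$-factor. That matching is part of the statement of Lemma \ref{l:decomp top}.
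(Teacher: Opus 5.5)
Your proposal is correct and follows the paper's route exactly. The paper obtains this corollary simply by taking $A_{\g}$-coinvariants in the $A_{\g}\times W$-equivariant isomorphism of Lemma~\ref{l:decomp top}. Your argument spells out the routine facts the paper leaves implicit: the $A_{\g}$- and $A_{e}$-coinvariants can be taken in either order, and $A_{\g}$-coinvariants pass through the factor $\homog{2d_{\cO}}{\cB_{e}}$, on which $A_{\g}$ acts trivially.
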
 
 
\begin{cor}\label{c:rel crit} Let $\cO$ be a nilpotent orbit and $e\in \cO$. Then $\cO\in \RT_{\rel}(\g)$ if and only if $E_{\cO}$ appears as a summand of $E_{\g}$ as a $W$-module. Moreover, the dual to the multiplicity space of $E_{\cO}$ in $E_{\g}$ can be canonically identified with
\begin{equation*}
\hBM{2d_{\g}-2d_{\cO}}{\Gr_{\g, \cO}}_{A_{\g}}.
\end{equation*}
\end{cor}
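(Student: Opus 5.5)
The plan is to read off both assertions from the decomposition \eqref{decomp top Mg}, combined with the Springer correspondence for $\homog{2d_{\cO}}{\cB_{e}}$ as a $W\times A_{e}$-module. First I identify $E_{\g}$ with the dual of $\homog{2d_{\g}}{\Fl_{\g}}_{A_{\g}}$. Since $\cohog{n}{\Fl_{\g}}=\Hom(\homog{n}{\Fl_{\g}},\Qlbar)$, the $A_{\g}$-invariants of cohomology are dual to the $A_{\g}$-coinvariants of homology. The space $\homog{2d_{\g}}{\Fl_{\g}}_{A_{\g}}$ is finite-dimensional, being spanned by the finitely many $A_{\g}$-orbits of top-dimensional components modulo the lattice $\L_{\g}$. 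Moreover $W$-modules are self-dual, so it is enough to show: $E_{\cO}$ occurs in $\homog{2d_{\g}}{\Fl_{\g}}_{A_{\g}}$ if and only if $\cO\in\RT_{\rel}(\g)$, with multiplicity space $\hBM{2d_{\g}-2d_{\cO}}{\Gr_{\g,\cO}}_{A_{\g}}$.

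Next I invoke the Springer correspondence in the form
\begin{equation*}
\homog{2d_{\cO}}{\cB_{e}}\cong\bigoplus_{\chi}E_{\cO,\chi}\ot\chi,
\end{equation*}
where $\chi$ runs over the irreducible representations of $A_{e}$ appearing in the Springer correspondence. Here $E_{\cO,\chi}$ is the irreducible $W$-module attached to $(\cO,\chi)$, and $E_{\cO,1}=E_{\cO}$. Two facts are used: the pairs $(\cO',\chi)$ give pairwise non-isomorphic $W$-modules, and the trivial $\chi$ always occurs. Substituting into \eqref{decomp top Mg}, the summand indexed by $\cO'$ becomes
\begin{equation*}
\bigoplus_{\chi}E_{\cO',\chi}\ot\bigl(\hBM{2d_{\g}-2d_{\cO'}}{\wt\Gr_{\g,e'}}_{A_{\g}}\ot\chi\bigr)_{A_{e'}}.
\end{equation*}
Because the Springer labels are pairwise distinct, $E_{\cO}$ can only occur from the term $\cO'=\cO$, $\chi=1$. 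Its multiplicity space is therefore $\bigl(\hBM{2d_{\g}-2d_{\cO}}{\wt\Gr_{\g,e}}_{A_{\g}}\bigr)_{A_{e}}$.

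It remains to show this coinvariant space equals $\hBM{2d_{\g}-2d_{\cO}}{\Gr_{\g,\cO}}_{A_{\g}}$ and is nonzero exactly when $\cO$ is relevant.
\begin{itemize}
\item Since $\nu_{\g,\cO}$ is a finite étale $A_{e}$-torsor, the top Borel–Moore homology of $\Gr_{\g,\cO}$ is the $A_{e}$-coinvariants of that of $\wt\Gr_{\g,e}$. This holds because top BM homology is spanned by fundamental classes of top-dimensional components, and components of the base correspond to $A_{e}$-orbits of components upstairs.
\item The $A_{\g}$- and $A_{e}$-actions commute, so the two coinvariants can be taken in either order. This also shows the multiplicity space is independent of the choice of $e$.
\item If $\cO$ is relevant, the BM homology in degree $2\dim\Gr_{\g,\cO}$ is nonzero. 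Its $A_{\g}$-coinvariants are also nonzero, because $A_{\g}$ permutes the fundamental classes of the components and the sum over each orbit in the coinvariants is a nonzero functional.
\item If $\cO$ is not relevant, $\cO$ does not appear in the sum \eqref{decomp top Mg} at all, so $E_{\cO}$ does not occur.
\end{itemize}
Putting these together gives both the criterion and the canonical identification of the dual multiplicity space.

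The main obstacle is the second step: justifying that the $W$-action on the $\homog{2d_{\cO}}{\cB_{e}}$ factor in Lemma \ref{l:decomp top} is the Springer action with the normalization under which $E_{\cO}$ is the $A_{e}$-invariant part. Normalizations differ by tensoring with the sign character, so a mismatch here would change which $W$-module is attached to $\cO$. I would settle it by comparing with the definition of $E_{\cO}$ given above as the top degree of ${}'H_{\cO}$.
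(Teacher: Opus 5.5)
Your proposal is correct and follows the paper's proof. Like the paper, you use \eqref{decomp top Mg} together with the Springer correspondence to see that $E_{\cO}$ can only come from the summand indexed by $\cO$ (with trivial $A_{e}$-character), and then identify the resulting multiplicity space $\hBM{2d_{\g}-2d_{\cO}}{\wt\Gr_{\g,e}}_{A_{\g}\times A_{e}}$ with $\hBM{2d_{\g}-2d_{\cO}}{\Gr_{\g,\cO}}_{A_{\g}}$ via the $A_{e}$-torsor $\nu_{\g,\cO}$. Your extra checks only make explicit what the paper leaves implicit: that these coinvariants are nonzero when $\cO$ is relevant, and that the Springer action has the right normalization.
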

\begin{proof}
In the decomposition \eqref{decomp top Mg}, all simple $W$-modules isomorphic to $E_{\cO}\cong\homog{2d_{\cO}}{\cB_{e}}_{A_{e}}$ has to come from the summand corresponding to $\cO$, because by Springer correspondence, for another nilpotent orbits $\cO'\notin \cO$ and $e'\in \cO'$, $\homog{2d_{\cO'}}{\cB_{e'}}$ does not contain subquotients isomorphic to $E_{\cO}$ as a $W$-module. The multiplicity  space of $E_{\cO}$ in $\homog{2d_{\g}}{\Fl_{\g}}_{A_{\g}}$, which is the dual of $E_{\g}$, is $\hBM{2d_{\g}-2d_{\cO}}{\wt\Gr_{\g, e}}_{A_{\g}\times A_{e}}\cong \hBM{2d_{\g}-2d_{\cO}}{\Gr_{\g, \cO}}_{A_{\g}}$, using that $\wt \Gr_{\g,e}\to \Gr_{\g,\cO}$ is an $A_{e}$-torsor.
\end{proof}

\subsection{Perverse filtration on Chern class polynomials}
Consider the composition
\begin{equation}\label{restr Sym to Flg}
\ph_{\g}: S:=\Sym^{*}(\cohog{2}{\Fl})\to \cohog{*}{\Fl}\xr{i^{*}_{\g}} \cohog{*}{\Fl_{\g}}^{A_{\g}}=H_{\g}\surj H_{\g,\pur}
\end{equation}
where the first map is given by the ring structure on $ \cohog{*}{\Fl}$.  Now we define a ``perverse'' grading on the symmetric algebra $S:=\Sym^{*}(\cohog{2}{\Fl})$ as follows. Using the decomposition
\begin{equation*}
\cohog{2}{\Fl}=\cohog{2}{\cB}\op \Qlbar \y
\end{equation*}
we can write
\begin{equation*}
S\cong \Sym^{*}(\cohog{2}{\cB})\ot \Qlbar[\y].
\end{equation*}
The perverse grading assigns degree $1$ to elements in $\cohog{2}{\cB}$, and assigns degree $2$ to $\y$, and is required to be additive under multiplication. In other words, the perverse degree $n$ piece of $S$ is defined to be
\begin{equation}\label{nS}
{}_{n}S:=\bigoplus_{j+2i=n}\Sym^{j}(\cohog{2}{\cB})\ot \y^{i}.
\end{equation}
Define a perverse filtration $P_{\bu}S$ on $S$ by
\begin{equation*}
P_{n}S:=\bigoplus_{n'\le n}{}_{n'}S=\bigoplus_{j+2i\le n}\Sym^{j}(\cohog{2}{\cB})\ot \y^{i}.
\end{equation*}

\begin{lemma}\label{l:perv strict comp} The map $\ph_{\g}$ in \eqref{restr Sym to Flg} is {\em strictly} compatible with the perverse filtrations on $S$ and on $H_{\g,\pur}$. In other words, for each $i\in \ZZ$, $\ph_{\g}(S)\cap P_{i}H_{\g,\pur} =\ph_{\g}(P_{i}S)$.
\end{lemma}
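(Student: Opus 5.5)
The inclusion $\ph_{\g}(P_{i}S)\subset \ph_{\g}(S)\cap P_{i}H_{\g,\pur}$ follows directly from the axioms of Definition \ref{d:perv fil}, which hold for $H_{\g,\pur}$ by Theorem \ref{th:perv fil}. The unit $1\in S$ maps to $H^{0}_{\g,\pur}=P_{0}H^{0}_{\g,\pur}$ by axiom \eqref{fil bounds}. Multiplication by an element of $\cohog{2}{\cB}$ raises $P$ by $1$ (axiom \eqref{Chern half}), and multiplication by $\y$ raises it by $2$ (axiom \eqref{HL}). Since $\ph_{\g}$ is a ring map followed by the $S$-linear quotient to the pure part, a monomial of perverse degree $n$ in ${}_{n}S$ maps into $P_{n}H_{\g,\pur}$. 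Hence $\ph_{\g}(P_{i}S)\subset P_{i}H_{\g,\pur}$.

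For the reverse inclusion I will argue degree by degree. Everything is graded: $S^{2m}$ maps to $H^{2m}_{\g,\pur}$, and both filtrations are compatible with the grading. So it suffices to take $x=\ph_{\g}(s)\in P_{i}H^{2m}_{\g,\pur}$ with $s\in S$ of cohomological degree $2m$, and to show $x\in\ph_{\g}(P_{i}S)$. We may assume $m\le d_{\g}$, since otherwise $H^{2m}_{\g,\pur}=0$. I will induct on $m$, and for fixed $m$ by descending induction on the largest perverse degree $n$ of the terms of $s$.

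Write
\begin{equation*}
s=\sum_{j=0}^{m}s_{j}\y^{m-j},\qquad s_{j}\in\Sym^{j}(\cohog{2}{\cB}),
\end{equation*}
so the term $s_{j}\y^{m-j}$ has perverse degree $2m-j$. If $n\le i$ we are done. Otherwise let $j_{0}=2m-n$, the smallest index with $s_{j_{0}}\ne0$. All other terms lie in $P_{n-1}S$ and therefore map into $P_{n-1}H_{\g,\pur}$. Since also $x\in P_{i}\subset P_{n-1}$, we get $\ph_{\g}(s_{j_{0}})\y^{m-j_{0}}\in P_{n-1}H_{\g,\pur}$.

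The key step is the following. The class $\ph_{\g}(s_{j_{0}})$ lies in $P_{j_{0}}H^{2j_{0}}_{\g,\pur}$. By hard Lefschetz, axiom \eqref{HL}, the map $\y^{d_{\g}-j_{0}}:\Gr^{P}_{j_{0}}\to\Gr^{P}_{2d_{\g}-j_{0}}$ is an isomorphism. Because $m-j_{0}\le d_{\g}-j_{0}$, the map $\y^{m-j_{0}}:\Gr^{P}_{j_{0}}\to \Gr^{P}_{2m-j_{0}}$ is therefore injective. It follows that $\ph_{\g}(s_{j_{0}})\in P_{j_{0}-1}H^{2j_{0}}_{\g,\pur}$, and we split into two cases.

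If $j_{0}=m$, Lemma \ref{l:fil lower bound} gives $P_{m-1}H^{2m}_{\g,\pur}=0$. Hence $\ph_{\g}(s_{m})=0$, and we simply drop this term from $s$.

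If $j_{0}<m$, the inductive hypothesis in cohomological degree $2j_{0}$ produces $t\in P_{j_{0}-1}S$ of degree $2j_{0}$ with $\ph_{\g}(t)=\ph_{\g}(s_{j_{0}})$. We replace $s_{j_{0}}\y^{m-j_{0}}$ by $t\y^{m-j_{0}}$, which lies in $P_{n-1}S$ and has the same image under $\ph_{\g}$.

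In either case $x=\ph_{\g}(s')$ with $s'\in P_{n-1}S$, and the descending induction on $n$ concludes. I expect the main obstacle to be this injectivity step. It needs hard Lefschetz on the associated graded at the precise level $j_{0}$, together with the bound $m\le d_{\g}$. It also uses Lemma \ref{l:fil lower bound} to start the induction; this lemma is exactly where the nonvanishing range $j/2\le i\le j$ of the perverse filtration enters.
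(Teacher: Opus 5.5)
Your proof is correct and uses the same two facts as the paper's proof: injectivity of $\cup\y^{m-j_0}$ on $\Gr^{P}_{j_0}H^{2j_0}_{\g,\pur}$ (from property \eqref{HL} and $m\le d_{\g}$), and the vanishing $P_{j_0-1}H^{2j_0}_{\g,\pur}=0$ from Lemma~\ref{l:fil lower bound}; the paper simply takes $j_0$ minimal with $\ph_{\g}(\th_{j})\neq 0$ and concludes in one step. Lemma~\ref{l:fil lower bound} gives this vanishing for every $j_0$, not only for $j_0=m$, so your case $j_0<m$ and the induction on $m$ are superfluous: the inductive hypothesis can only return $t=0$, since $P_{j_0-1}S$ is zero in cohomological degree $2j_0$.
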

\begin{proof}
By properties \eqref{Chern half} and \eqref{HL} of a perverse filtration on $H_{\g,\pur}$, $\ph_{\g}$ sends 
$\Sym^{j}(\cohog{2}{\cB})\y^{i}$ to $P_{j+2i}H^{2j+2i}_{\g,\pur}$. Therefore $\ph_{\g}(P_{i}S)\subset P_{i}H_{\g,\pur}$.

To show strictness, we may restrict to each cohomological degree $2n$, $0\le n\le d_{\g}$. Let $0\ne \xi=\sum_{j=0}^{n}\th_{j}\y^{n-j}\in \Sym^{n}(\cohog{2}{\Fl})$, where $\th_{j}\in \Sym^{j}(\cohog{2}{\cB})$. Suppose $\ph_{\g}(\xi)\in P_{i}H^{2n}_{\g,\pur}$, we need to show that there exists $\xi'\in P_{i}S$ such that $\ph_{\g}(\xi)=\ph_{\g}(\xi')$. We take $\xi'=\sum_{j\ge 2n-i}\th_{j}\y^{n-j}$, which clearly belongs to $P_{i}S$. We need to show that $\ph_{\g}(\xi-\xi')=\sum_{j<2n-i}\ph_{\g}(\th_{j})\y^{n-j}=0$. Let $j_{0}$ be the smallest integer $j$ such that $\ph_{\g}(\th_{j})\ne 0$ (as an element in $H^{2j}_{\g,\pur}$). Then $\ph_{\g}(\xi)\subset P_{2n-j_{0}}H^{2n}_{\g,\pur}$. Projecting to $\Gr^{P}_{2n-j_{0}}H^{2n}_{\g,\pur}$, only the term $\th_{j_{0}}\y^{n-j_{0}}$ in $\xi$ contributes. Therefore the image of $\ph_{\g}(\xi)$ in  $\Gr^{P}_{2n-j_{0}}H^{2n}_{\g,\pur}$ is the same as the image of $\ph_{\g}(\th_{j_{0}})$ under the map
\begin{equation*}
\cup \y^{n-j_{0}}: \Gr^{P}_{j_{0}}H^{2j_{0}}_{\g,\pur}\to \Gr^{P}_{2n-j_{0}}H^{2n}_{\g,\pur}.
\end{equation*}
Since $n\le d_{\g}$, the above map is injective by property \eqref{HL} of the perverse filtration. Now $\ph_{\g}(\th_{j_{0}})\in P_{j_{0}}H^{2j_{0}}_{\g,\pur}\isom \Gr^{P}_{j_{0}}H^{2j_{0}}_{\g,\pur}$ is nonzero by assumption, hence the image of $\ph_{\g}(\xi)$ in $\Gr^{P}_{2n-j_{0}}H^{2n}_{\g,\pur}$ is nonzero. Since we assume $\ph_{\g}(\xi)\in P_{i}H^{2n}_{\g,\pur}$, this forces $2n-j_{0}\le i$, i.e., $j_{0}\ge 2n-i$, which implies $\ph_{\g}(\xi)=\ph_{\g}(\xi')$. 
\end{proof}

Consider the degree $2d_{\r}$ part of $\ph_{\g}$\begin{equation}\label{restr Sym to Flg top deg}
\ph^{d_{\g}}_{\g}: \Sym^{d_{\g}}(\cohog{2}{\Fl})\to \cohog{2d_{\g}}{\Fl}\xr{i^{*}_{\g}} \cohog{2d_{\g}}{\Fl_{\g}}^{A_{\g}}=E_{\g}.
\end{equation}
Let
\begin{equation*}
'E_{\g}\subset E_{\g}
\end{equation*}
be the image of $\ph^{d_{\g}}_{\g}$. 
 
Let $P_\bu H_{\g,\pur}$ be a perverse filtration on $H_{\g,\pur}$, which exists by Corollary \ref{c:perv fil exists}. We denote by $P_\bu E_{\g}$ the induced filtration on the top degree  $E_{\g}=H^{2d_{\g}}_{\g}$, and let $P_{\bu}{}'E_{\g}={}'E_{\g}\cap P_{\bu}E_{\g}$. As we noted in the Introduction, the filtration $P_{\bu}{}'E_{\g}$ can be defined using the map $\ph^{d_{\g}}_{\g}$ and the explicit ``perverse grading'' on $S^{d_{\g}}$, without going through the constructions of \S\ref{s:perv fil}. 


\begin{theorem}\label{th:main}
Let $\cO\in \RT(\g)$. Then there is a canonical surjection of graded $W$-modules
\begin{equation*}
\th_{\g,\cO}: \Gr^{P}_{2d_{\g}-\bu}{}'E_{\g}\surj {}'H^{2\bu}_{\cO}.
\end{equation*}
In other words, for each $j\ge0$, there is a canonical surjection of $W$-modules
\begin{equation*}
\Gr^{P}_{2d_{\g}-j}{}'E_{\g}\surj {}'H^{2j}_{\cO}.
\end{equation*}
\end{theorem}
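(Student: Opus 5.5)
The plan is to identify $\Gr^{P}_{2d_{\g}-j}{}'E_{\g}$ with the image of $\Sym^{j}(\cohog{2}{\cB})$ in $H^{2j}_{\g,\pur}$, and then to map that image to $\cohog{2j}{\cB_{e}}$ by restricting to a single Springer fiber inside $\Fl_{\g}$.

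\emph{Step 1: reduce the top degree to degree $2j$.} By Lemma \ref{l:perv strict comp}, $P_{2d_{\g}-j}{}'E_{\g}=\ph_{\g}(P_{2d_{\g}-j}S\cap \Sym^{d_{\g}})$. This space is spanned by the $\ph_{\g}(\th\,\y^{d_{\g}-j'})$ with $\th\in\Sym^{j'}(\cohog{2}{\cB})$ and $j'\ge j$. Hence there is a surjection
\[\Sym^{j}(\cohog{2}{\cB})\surj \Gr^{P}_{2d_{\g}-j}{}'E_{\g},\qquad \th\mapsto \ph_{\g}(\th)\y^{d_{\g}-j}.\]
By Lemma \ref{l:fil lower bound}, $P_{j-1}H^{2j}_{\g,\pur}=0$, so $P_{j}H^{2j}_{\g,\pur}=\Gr^{P}_{j}H^{2j}_{\g,\pur}$. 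By property \eqref{HL}, cupping with $\y^{d_{\g}-j}$ is injective from $\Gr^{P}_{j}$ to $\Gr^{P}_{2d_{\g}-j}$. This is the same argument as in the proof of Lemma \ref{l:perv strict comp}. It gives a canonical isomorphism
\[\Gr^{P}_{2d_{\g}-j}{}'E_{\g}\cong \ph_{\g}(\Sym^{j}\cohog{2}{\cB})\subset H^{2j}_{\g,\pur}.\]
For $p$ large, $\cohog{*}{\cB}$ is generated by $\cohog{2}{\cB}$. So the right side is the degree-$2j$ part of the image of $\cohog{*}{\cB}\to H_{\g,\pur}$, and ${}'H^{2j}_{\cO}$ is likewise the image of $\Sym^{j}(\cohog{2}{\cB})$ under $i_{e}^{*}$. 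The theorem is therefore reduced to the following kernel inclusion: if $\th\in\Sym^{j}(\cohog{2}{\cB})$ has $\ph_{\g}(\th)=0$ in $H_{\g,\pur}$, then $i_{e}^{*}\th=0$.

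\emph{Step 2: the kernel inclusion.} Since $\cO\in\RT(\g)$, choose $gL^{+}G\in\Gr_{\g,\cO}$ with $\Ad(g^{-1})\g\in e+t\frg\tl{t}$. The fiber $\pi_{\g}^{-1}(gL^{+}G)=g\cB_{e}$ is a closed subscheme $j_{g}:\cB_{e}\incl\Fl_{\g}$. The composition $\cB_{e}\to\Fl_{\g}\to\Fl$ is $\cB_{e}\subset\cB$ translated by $g$. Since $LG$ is connected, the composite on cohomology restricted to $\cohog{*}{\cB}\subset\cohog{*}{\Fl}$ is $i_{e}^{*}$. Hence $i_{e}^{*}\th=j_{g}^{*}(\ph_{\g}(\th))$, where $\ph_{\g}(\th)$ is first taken in $H_{\g}\subset\cohog{*}{\Fl_{\g}}$.

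It remains to see that $j_{g}^{*}$ restricted to $H_{\g}$ factors through the pure quotient $H_{\g,\pur}$. I would argue this as follows. Pull back from a finite-type closed $Z\supset g\cB_{e}$. Restriction maps are strictly compatible with weights. The cohomology $\cohog{i}{\cB_{e}}$ is pure of weight $i$, because Springer fibers have pure cohomology (paving by affines for $p$ large). Therefore $W_{i-1}\cohog{i}{\Fl_{\g}}$ maps to zero. This shows that $\th\mapsto i_{e}^{*}\th$ factors through $\ph_{\g}(\Sym^{j})\subset H^{2j}_{\g,\pur}$, and we obtain
\[\th_{\g,\cO}:\Gr^{P}_{2d_{\g}-j}{}'E_{\g}\surj {}'H^{2j}_{\cO}.\]
It is surjective by the description of ${}'H_{\cO}$ in Step 1. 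It is canonical because it is characterized by $\ph_{\g}(\th)\y^{d_{\g}-j}\mapsto i_{e}^{*}\th$, independently of $g$.

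\emph{Step 3: $W$-equivariance.} This is the step I expect to be the main obstacle. The source $\Sym^{j}(\cohog{2}{\cB})$ surjects onto both sides, and on the target the map $\cohog{*}{\cB}\to\cohog{*}{\cB_{e}}$ intertwines the $W$-action on $\cohog{*}{\cB}$ (the coinvariant algebra) with the Springer action. So it suffices to show that $\ph_{\g}$ restricted to $\cohog{*}{\cB}$ intertwines the $W$-action on $\cohog{*}{\cB}$ with Lusztig's action on $H_{\g,\pur}$, compatibly with multiplication by $\y$ (which is $W$-invariant). For each simple reflection $s$ of $W$, I would use the Cartesian diagram \eqref{two Flg}. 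There the $s$-action comes from the Springer action on $\bR\nu^{\na}_{s*}\Qlbar$ over $[\frl_{s}/L_{s}]$, which is compatible with the classical action on $\cohog{*}{[\frb_{s}/B_{s}]}=\cohog{*}{\BB T}$. The line bundles $\cL_{\l}$ are pulled back from $\BB T$ via $\ev_{s,\g}$.

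Failing a clean direct check, I would fall back on a weaker formulation. Here $W$ acts on both sides and both are $W$-equivariant quotients of $\Sym^{j}(\cohog{2}{\cB})$, once $\ph_{\g}$ on $\cohog{*}{\cB}$ is known to be $W$-equivariant. For this I would invoke the global analog via \cite[\S5]{YGS} together with Lemma \ref{l:Wa equiv}.
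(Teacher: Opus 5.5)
Your proposal is correct and is essentially the paper's proof: strictness (Lemma \ref{l:perv strict comp}) plus hard Lefschetz identify $\Gr^{P}_{2d_{\g}-j}{}'E_{\g}$ with $\ph_{\g}(\Sym^{j}\cohog{2}{\cB})\subset \Gr^{P}_{j}H^{2j}_{\g,\pur}$, and restricting to a fiber $\pi_{\g}^{-1}(x)\cong\cB_{e}$ recovers $i_{e}^{*}$, where the restriction factors through the pure part by Springer's purity theorem. The only difference is in your Step 3, and it is not a real obstacle: the paper simply takes as given both the $W$-equivariance of $\ph_{\g}$ (part of the $\Sym(\cohog{2}{\Fl})\rtimes\Wa$-structure it assumes throughout) and that of the fiber restriction $\k_{x}^{*}$, whereas your observation that both sides are $W$-equivariant quotients of $\Sym^{j}(\cohog{2}{\cB})$ removes the need for the second.
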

\begin{proof}
Recall the image ${}'E_{\g}=\Im(\ph^{d_{\g}}_{\g})\subset E_{\g}$ is equipped with the induced perverse filtration from $E_{\g}$. By the strictness proved in Lemma \ref{l:perv strict comp}, we have a canonical isomorphism of graded vector spaces
\begin{equation}\label{Gr E'}
\Gr^{P}_{\bu}{}'E_{\g}\cong \Im(\Gr^{P}_{\bu}S^{d_{\g}}\to \Gr^{P}_{\bu}E_{\g}).
\end{equation}
By \eqref{nS}, 
\begin{equation*}
\Gr^{P}_{2d_{\g}-j}S^{d_{\g}}=\Sym^{j}(\cohog{2}{\cB})\ot\y^{d_{\g}-j}.
\end{equation*}
Let
\begin{equation*}
\psi^{j}_{\g}:\Sym^{j}(\cohog{2}{\cB})\to P_{j}H^{2j}_{\g,\pur}=\Gr^{P}_{j} H^{2j}_{\g,\pur}
\end{equation*}
be the restriction of $\ph^{j}_{\g}$. We have a commutative diagram
\begin{equation*}
\xymatrix{\Sym^{j}(\cohog{2}{\cB})\ar[d]_{\psi^{j}_{\g}}\ar[r]^-{\y^{d_{\g}-j}}_-{\sim} & \Sym^{j}(\cohog{2}{\cB})\ot\y^{d_{\g}-j}\ar[d]^{\Gr^{P}_{2d_{\g}-j}\ph_{\g}^{d_{\g}}}\\
\Gr^{P}_{j}H^{2j}_{\g,\pur}\ar[r]^-{\y^{d_{\g}-j}} & \Gr^{P}_{2d_{\g}-j}E_{\g}}
\end{equation*}
where the bottom horizontal arrow is an isomorphism by the property \eqref{HL} of a perverse filtration. Therefore, $\y^{d_{\g}-j}$ induces a $W$-equivariant isomorphism 
\begin{equation*}
\Im(\psi^{j}_{\g})\cong \Im(\Gr^{P}_{2d_{\g}-j}\ph^{d_{\g}}_{\g})\stackrel{\eqref{Gr E'}}{=}\Gr^{P}_{2d_{\g}-j}{}'E_{\g}.
\end{equation*}
It remains to construct a $W$-equivariant surjection
\begin{equation*}
\Im(\psi^{j}_{\g})\surj {}'H^{2j}_{\cO}.
\end{equation*}
 
Pick a point $x=gL^{+}G\in \Gr_{\g,\cO}$, and let $e\in \cO$ be the reduction of $\Ad(g)^{-1}\g$ mod $t$. Then the fiber $\pi_\g^{-1}(x)\subset \Fl_\g$ is isomorphic to $\cB_e$ (well-defined up to $G_e$). 
Restriction along the inclusion $\k_x: \cB_e\cong \pi_\g^{-1}(x)\incl \Fl_\g$ gives a $W$-equivariant map
\begin{equation*}
\k_x^*: \cohog{*}{\Fl_\g}\to \cohog{*}{\cB_e}
\end{equation*}
and it factors through $\cohog{*}{\Fl_\g}_{\pur}$ because $\cohog{*}{\cB_e}$ is pure by Springer \cite{Spr}. Composing $\psi^{j}_{\g}$ with $\k_{x}^{*}$ we get a map of 
$W$-modules
\begin{equation}\label{res Flg to Be inv}
\Sym^{j}(\cohog{2}{\cB})\surj \cohog{2j}{\cB}\to P_jH^{2j}_{\g,\pur}\to \cohog{2j}{\cB_e}
\end{equation}
which is the restriction along the inclusion $i_e: \cB_e\incl \cB$. Therefore we get a surjection of $W$-modules
\begin{equation}\label{Im phj}
\Im(\psi^{j}_\g)\surj \Im(i_{e}^{*}: \cohog{2j}{\cB}\to \cohog{2j}{\cB_e})={}'H^{2j}_{\cO},
\end{equation}
as desired. Since $'H^{2j}_{\cO}$ as a quotient of $\Sym^{j}(\cohog{2}{\cB})$ does not depend on the choice of $e\in \cO$, the map \eqref{Im phj} does not depend on the choice of $x\in\Gr_{\g,\cO}$ either, hence the canonicity of the map.
\end{proof}

\begin{cor}\label{c:all rel} All nilpotent orbits $\cO\in \RT(\g)$ are $\g$-relevant, i.e., $\RT_{\rel}(\g)=\RT(\g)$.
\end{cor}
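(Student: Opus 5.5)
The plan is to combine Theorem \ref{th:main} in the top degree $j=d_{\cO}$ with the relevance criterion of Corollary \ref{c:rel crit}. Fix $\cO\in\RT(\g)$ and $e\in\cO$. Applying Theorem \ref{th:main} with $j=d_{\cO}$ gives a $W$-equivariant surjection
\begin{equation*}
\Gr^{P}_{2d_{\g}-d_{\cO}}{}'E_{\g}\surj {}'H^{2d_{\cO}}_{\cO}.
\end{equation*}
As recalled at the beginning of this section, the top degree piece of ${}'H_{\cO}$ is the irreducible $W$-module $E_{\cO}$ attached to $\cO$ and the trivial local system. To use this, I have to check that the top piece of ${}'H_{\cO}$ is indeed ${}'H^{2d_{\cO}}_{\cO}$, i.e.\ that the restriction $\cohog{2d_{\cO}}{\cB}\to \cohog{2d_{\cO}}{\cB_{e}}^{A_{e}}$ is nonzero. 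This is the classical fact that the image of the restriction $i_{e}^{*}$ in top degree is $E_{\cO}$, which is nonzero. It is essentially a restatement of the sentence quoted above, so I will invoke it directly.

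Consequently $E_{\cO}$ is a $W$-subquotient of ${}'E_{\g}\subset E_{\g}$. Since $W$ is finite and we are in characteristic zero, $E_{\g}$ is semisimple, so $E_{\cO}$ appears as a direct summand of $E_{\g}$. Corollary \ref{c:rel crit} then says exactly that $\cO\in\RT_{\rel}(\g)$. As $\cO\in\RT(\g)$ was arbitrary, we get $\RT(\g)\subset\RT_{\rel}(\g)$; the reverse inclusion holds by definition.

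The only delicate point is whether Theorem \ref{th:main} needs $\cO$ to be relevant already, which would make the argument circular. I will reread its proof to confirm it does not. The proof uses a single point $x\in\Gr_{\g,\cO}$ (nonempty because $\cO\in\RT(\g)$), the restriction $\k_{x}^{*}$ to the fiber $\cB_{e}$, purity of $\cohog{*}{\cB_{e}}$, and the hard Lefschetz property of the perverse filtration. None of these steps involves dimensions of strata, so the argument is not circular. The hard Lefschetz property is the real input: it forces the surviving classes in $\Gr^{P}_{j}H^{2j}_{\g,\pur}$ to persist into top degree.

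One small issue remains. $E_{\g}$ is defined using $A_{\g}$-invariants, while Corollary \ref{c:rel crit} is phrased via the dual $\homog{2d_{\g}}{\Fl_{\g}}_{A_{\g}}$. Since the top degree cohomology is pure, $E_{\g}$ coincides with $H^{2d_{\g}}_{\g,\pur}$, and duality preserves the multiplicity of the self-dual module $E_{\cO}$ (Weyl group representations are self-dual). So the criterion applies as stated.
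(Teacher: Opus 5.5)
Your proposal is correct and is the paper's argument. Theorem~\ref{th:main}, taken in degree $j=d_{\cO}$ where ${}'H^{2d_{\cO}}_{\cO}=E_{\cO}$, makes $E_{\cO}$ a $W$-subquotient of $E_{\g}$, and Corollary~\ref{c:rel crit} then forces $\cO\in\RT_{\rel}(\g)$. Your extra checks only make explicit what the paper's two-line proof leaves implicit: semisimplicity, non-circularity (Theorem~\ref{th:main} needs only $\Gr_{\g,\cO}\neq\vn$), and self-duality of Weyl group representations.
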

\begin{proof}
Let $\cO\in \RT(\g)$. By Theorem \ref{th:main}, $E_{\cO}$ is a subquotient of $E_{\g}$ as a $W$-module. By Corollary \ref{c:rel crit}, $\cO$ is $\g$-relevant. 
\end{proof}

\begin{cor} If $\cO\in \RT(\g)$ and $\cO'\ge \cO$, then $\cO'\in \RT(\g)$. In other words, the subset
\begin{equation*}
\bigcup_{\cO\in \RT(\g)}\cO
\end{equation*}
is open in $\cN$.
\end{cor}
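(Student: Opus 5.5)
The plan is to combine Theorem \ref{th:main} with Corollary \ref{c:rel crit}, applied to the larger orbit $\cO'$, using an elementary fact about Springer representations: when $\cO\le\cO'$, the irreducible $E_{\cO'}$ already occurs inside ${}'H_{\cO}$. Fix $\cO\in\RT(\g)$ and $\cO'\ge\cO$, and pick $e\in\cO$, $e'\in\cO'$. By Theorem \ref{th:main}, every graded piece ${}'H^{2j}_{\cO}$ is a $W$-subquotient of $E_{\g}$ (via $\Gr^{P}_{2d_{\g}-j}{}'E_{\g}$). Hence, if $E_{\cO'}$ is a constituent of ${}'H_{\cO}$, it is a constituent of $E_{\g}$. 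Since $E_{\g}$ is a finite-dimensional semisimple $W$-module, being a constituent means being a summand. Corollary \ref{c:rel crit} then gives $\cO'\in\RT_{\rel}(\g)\subset\RT(\g)$.

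The substantive step is therefore to show that $E_{\cO'}$ occurs in ${}'H^{2d_{\cO'}}_{\cO}$. Here I would use the classical result (Borho--MacPherson, or the $W$-module description of the image of $\cohog{*}{\cB}$) that $i_e^{*}:\cohog{*}{\cB}\to\cohog{*}{\cB_e}$ has image isomorphic, as a graded $W$-module, to $\cohog{*}{\cB}/I_e$. This quotient is the coordinate ring of the scheme-theoretic intersection of $\overline{W\cdot\frq_e}$ with the nilcone of the torus, in the sense of De Concini--Procesi and Tanisaki. Restricting along $\cB_{e'}$ would be cleaner, so I would argue more directly as follows.

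For $\cO\le\cO'$, the element $e$ lies in $\overline{\cO'}$. Consider the Springer sheaf restricted to $\overline{\cO'}$. The constituent $\IC(\overline{\cO'},\Qlbar)$ of $\bR\pi_{*}\Qlbar[\dim\tilde\cN]$ appears with multiplicity space $E_{\cO'}$, and its stalk at $e$ is nonzero in the lowest degree $-\dim\cO'$, which corresponds to $\cohog{2d_{\cO'}}{\cB_e}$. So $E_{\cO'}\ot\Qlbar$ embeds into $\cohog{2d_{\cO'}}{\cB_e}^{A_e}$, since the IC stalk in lowest degree is the constant sheaf, which is $A_e$-invariant.

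It remains to see that this copy lies in the image of $\cohog{*}{\cB}$. The hard step, and the main obstacle, is exactly this: controlling which $W$-isotypic parts of $\cohog{*}{\cB_e}$ come from $\cohog{*}{\cB}$. My first attempt would be a factorization argument. The IC summand contributes the constant part $\cohog{0}{}$ of its stalk at $e'$, which is the top class $E_{\cO'}\subset\cohog{2d_{\cO'}}{\cB_{e'}}$, and this is known to lie in ${}'H_{\cO'}$. I would then use the specialization/restriction maps between the $\cohog{*}{\cB_e}$ for $e\in\overline{\cO'}$. These arise from the closed embedding $\pi^{-1}(\overline{\cO'}\cap S_e)$, where $S_e$ is a Slodowy slice, and they commute with the restriction from $\cohog{*}{\cB}$. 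They are compatible with the lowest-degree stalk map of the IC sheaf, which is an isomorphism on the constant part. This would give that the $E_{\cO'}$-isotypic part in degree $2d_{\cO'}$ of ${}'H_{\cO}$ surjects onto that of ${}'H_{\cO'}$, and the latter is nonzero.

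If this is too delicate, I would fall back on the Tanisaki/De Concini--Procesi presentation. In that description ${}'H_{\cO}$ contains the $W$-module of harmonic polynomials generated by the Joseph polynomial $\prod_{\a}$ attached to $e$. Since the ideal $I_{e}\subset I_{e'}$ for $e$ in the closure of $\cO'$, ${}'H_{\cO'}$ is a quotient of ${}'H_{\cO}$. This quotient statement is exactly what is needed, and it gives $E_{\cO'}\subset{}'H_{\cO'}$ as a subquotient of ${}'H_{\cO}$. The openness formulation then follows immediately from upward closedness under the closure order.
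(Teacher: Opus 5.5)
Your proposal is correct and, once the detours are removed, it is the paper's argument. You reduce via Theorem \ref{th:main} and Corollary \ref{c:rel crit} to showing that $E_{\cO'}$ occurs in ${}'H_{\cO}$, and you obtain this from the factorization $i^{*}_{e'}=s\circ i^{*}_{e}$ through the cospecialization map $s:\cohog{*}{\cB_{e}}\to\cohog{*}{\cB_{e'}}$ (which you realize via the Slodowy slice), so $s$ maps ${}'H^{2d_{\cO'}}_{\cO}$ onto ${}'H^{2d_{\cO'}}_{\cO'}=E_{\cO'}$. The IC-stalk discussion and the De Concini--Procesi/Tanisaki fallback are unnecessary, since this factorization alone gives both the surjection and the kernel inclusion $I_{e}\subset I_{e'}$.
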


%
\begin{proof} Let $\cO$ be a nilpotent orbit such that $\Gr_{\g,\cO}\ne\vn$, and let $\cO'$ be another nilpotent orbit  such that  $\ov{\cO'}\supset\cO$. By Lemma \ref{c:rel crit}, it suffices to show that $E_{\cO'}$ appears in $E_{\g}$ as a $W$-module. By Theorem \ref{th:main}, it suffices to show that ${}'H_{\cO}$ contains $E_{\cO'}$ as a $W$-module.

%

Let $e\in \cO$ and $e'\in \cO'$. Let $d$ and $d'$ be the dimensions of $\cB_{e}$ and $\cB_{e'}$. Consider the Springer map $\pi: \wt\cN\to \cN$. Then $i^{*}_{e}$ is the cospecialization map from the stalk of $\pi_{*}\Qlbar$ at $0$ to the stalk at $e$. We may  compose this with the further cospecialization map to the stalk at $e'$ and get
\begin{equation*}
i^{*}_{e'}: \cohog{*}{\cB}\xr{i^{*}_{e}} \cohog{*}{\cB_{e}}\xr{s}  \cohog{*}{\cB_{e'}}
\end{equation*}
Taking degree $2d'$ we get
\begin{equation*}
i^{*}_{e'}: \cohog{2d'}{\cB}\xr{i^{*}_{e}} \cohog{2d'}{\cB_{e}}\xr{s} \cohog{2d'}{\cB_{e'}}\cong E_{\cO'}\oplus (\cdots)
\end{equation*}
We know that the image of $i^{*}_{e'}$ in degree $2d'$ is precisely  $E_{\cO'}$. Therefore ${}'H^{2d'}_{\cO}=\Im(i^{*}_{e})\cap \cohog{2d'}{\cB_{e}}$ maps surjectively to $E_{\cO'}$.
\end{proof}


\begin{exam}
Consider the case where $d_{\g}=1$ (the subregular affine Springer fibers). In this case, the description of $\Fl_{\g}$ in \cite[\S7.6-7.8]{KL} shows that $\cohog{2}{\Fl}\to \cohog{2}{\Fl_{\g}}^{A_{\g}}=E_{\g}$ is an isomorphism. On the other hand, $\RT(\g)$ consists of the regular and subregular orbits. Let $\cO$ be the subregular nilpotent orbit of $\frg$. Then ${}'H_{\cO}$ is precisely $\cohog{0}{\cB}\op\cohog{2}{\cB}$, which has the same dimension as $ \cohog{2}{\Fl}$. By Theorem \ref{th:main}, we conclude that in this case $\th_{\g,\cO}$ is an isomorphism. Of course this isomorphism can be constructed directly from the explicit configuration of projective lines in $\Fl_{\g}$ and $\cB_{e}$. 
\end{exam}

\begin{remark} If $\cO\in \RT(\g)$ and $\cO\le \cO'$, then $'H_{\cO'}$ is a further quotient of $'H_{\cO}$, both as quotients of $\cohog{*}{\cB}$. The canonicity of the map $\th_{\g,\cO}$ in Theorem \ref{th:main} shows that there is a commutative diagram
\begin{equation*}
\xymatrix{\Gr^{P}_{2d_{\g}-\bu}{}'E_{\g} \ar@{->>}[r]^-{\th_{\g,\cO}} \ar@/_1pc/@{->>}[rr]_-{\th_{\g,\cO'}} & {}'H^{2\bu}_{\cO}\ar@{->>}[r] & {}'H^{2\bu}_{\cO'}}
\end{equation*}
Therefore, the maps $\th_{g,\cO}$ for the minimal reduction types $\cO$ of $\g$ (conjecturally there is only one minimal reduction type $\cO$) determine the maps $\th_{g,\cO'}$ for all reduction types $\cO'$.
\end{remark}

\subsection{A refinement of a theorem of Tsai}

When $\g\in \frg\lr{t}$ is such that $t^{-1}\g$ is integral, C-C.Tsai proved the following result on the number of irreducible components of $\Fl_{\g}$.

\begin{theorem}[Tsai \cite{Tsai}]\label{th:Tsai}  Suppose $\g\in \frg\lr{t}$ is regular semisimple and $t^{-1}\g$ is integral.  Then 
\begin{equation*}
\dim E_{\g}=|W|.
\end{equation*}
\end{theorem}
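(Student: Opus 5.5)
The plan is to squeeze $\dim E_{\g}$ between two bounds: a lower bound from Theorem \ref{th:main} applied to the zero orbit, and an upper bound by counting irreducible components. I expect the upper bound to be the main obstacle.

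Write $\g=t\g'$ with $\g'$ integral and regular semisimple. Then $\g$ is topologically nilpotent, and its reduction types have a simple description. Since $\g'$ is integral, $\Gr_{\g'}\neq\vn$. So there is $g\in LG$ with $\Ad(g^{-1})\g'\in\frg\tl{t}$, hence $\Ad(g^{-1})\g\in t\frg\tl{t}$. This shows the zero orbit $\{0\}$ lies in $\RT(\g)$. In fact $\Gr_{\g,\{0\}}=\Gr_{\g'}$ exactly.

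For $e=0$ we have $\cB_{e}=\cB$, $A_{e}=1$, and ${}'H_{\{0\}}=\cohog{*}{\cB}$. As an ungraded $W$-module, $\cohog{*}{\cB}$ is the regular representation. Theorem \ref{th:main} gives a surjection $\Gr^{P}_{2d_{\g}-\bu}{}'E_{\g}\surj \cohog{2\bu}{\cB}$. Hence
\begin{equation*}
\dim E_{\g}\ge \dim{}'E_{\g}\ge |W|.
\end{equation*}
As a consistency check, the discriminant of $\g$ has valuation $\val\Delta(\g)=\val\Delta(\g')+|\Phi|$. Bezrukavnikov's dimension formula then gives $d_{\g}=d_{\g'}+\dim\cB$, so $\{0\}$ is $\g$-relevant, in agreement with Corollary \ref{c:all rel}.

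For the upper bound I would use Lemma \ref{l:decomp top} and Corollary \ref{c:rel crit}. These give
\begin{equation*}
\dim E_{\g}=\sum_{\cO\in\RT(\g)}\dim E_{\cO}\cdot m_{\cO},
\end{equation*}
where $m_{\cO}$ is the number of $A_{\g}$-orbits of $(d_{\g}-d_{\cO})$-dimensional irreducible components of $\Gr_{\g,\cO}$. So it suffices to show that this weighted component count is at most $|W|$. Equivalently, the number of $\L_{\g}$-orbits of top-dimensional components of $\Fl_{\g}$ is at most $|W|$ times the number of $\L_{\g}$-orbits of components of $\Gr_{\g'}$, suitably normalized.

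My first attempt would be a point count over $k=\ov\FF_{q}$. By the Lang--Weil argument, the number of top components of $\L_{\g}\bs\Fl_{\g}$ is read off as the coefficient of $q^{d_{\g}}$ in $\#(\L_{\g}\bs\Fl_{\g})(\FF_{q^{m}})$, as $m\to\infty$. This count is an orbital integral of $\mathbf{1}_{\Lie\bI}$ at $\g=t\g'$. Equivalently, it is the orbital integral at $\g'$ of $\mathbf{1}_{t^{-1}\Lie\bI}$.

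Next I would decompose $t^{-1}\Lie\bI$ into $\bI$-cosets of $\frg\tl{t}$. The map $t^{-1}\Lie\bI\to t^{-1}\frg\tl{t}/\frg\tl{t}=\frg$ has image $\frb$. I would organize the count via the map $\Fl_{\g}\to\Gr_{\g'}$-type fibration, with fibers over each point given by a variety of Borels $\{B'\,:\,\ov{\Ad(g^{-1})\g}\in\frb'\}$. The contributions should then be compared with the leading term of the count for $\Gr_{\g'}$. The expected outcome is that the leading coefficient is bounded by $|W|$ times that for $\Gr_{\g'}$, and that the contributions of strata $\cO\neq\{0\}$ only account for the nonzero-degree parts of the regular representation already produced in the lower bound.

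If the point count proves too delicate, my fallback is the perverse filtration. Here one would show that ${}'E_{\g}=E_{\g}$ and that each $\psi^{j}_{\g}$ in the proof of Theorem \ref{th:main} maps isomorphically onto $\cohog{2j}{\cB}$. This would use the hard Lefschetz property (\ref{HL}) together with the global model of \S\ref{s:perv fil}, in which the fiber of $\cM$ over a point with $a(x)$ regular semisimple is a $W$-torsor over $\cM^{\Hit}$. Either way, establishing the inequality $\dim E_{\g}\le|W|$ is the step I expect to be hardest.
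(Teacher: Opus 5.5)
\textbf{The upper bound is missing.} Your lower bound is correct and does not depend on Tsai. Indeed $\Gr_{\g,\{0\}}=\Gr_{t^{-1}\g}\neq\vn$ and ${}'H_{\{0\}}=\cohog{*}{\cB}$, so Theorem \ref{th:main} gives $\dim{}'E_\g\ge|W|$.

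The reverse inequality for ${}'E_\g$ is automatic. The image of $\Sym^{d_\g}(\cohog{2}{\Fl})$ in $\cohog{2d_\g}{\Fl}$ lies in $\bigoplus_{j=0}^{N}\cohog{2j}{\cB}\ot\y^{d_\g-j}$, where $N=\dim\cB\le d_\g$. Hence $\dim{}'E_\g\le|W|$, and your argument in fact proves $\dim{}'E_\g=|W|$.

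So the theorem is equivalent to ${}'E_\g=E_\g$, i.e.\ to $\dim E_\g\le|W|$. For that step your proposal states an expected outcome, not an argument. This bound is the entire content of the cited result. The paper does not reprove it; it only describes Tsai's method. There, the point count of $\Fl_\g$ is an orbital integral of $\mathbf{1}_{\Lie\bI}$. The Shalika germ expansion of that integral then shows that the leading term comes only from the regular nilpotent orbit, and that this term equals $|W|$. Nothing in your sketch plays the role of the germ expansion.

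\textbf{Why neither route closes the gap.} Organizing the count along $\pi_\g:\Fl_\g\to\Gr_\g$ with fibers $\cB_e$ only reproduces Lemma \ref{l:decomp top} and Corollary \ref{c:rel crit} in point-counting form. (There is no fibration over $\Gr_{t^{-1}\g}$; that is just the stratum $\Gr_{\g,\{0\}}$, with fiber all of $\cB$.) The leading coefficient becomes a sum over $\cO\in\RT(\g)$ of counts of $A_\g$-orbits of top-dimensional components of the strata $\wt\Gr_{\g,e}$, weighted by Springer data. Bounding it requires controlling the components of every stratum with $\cO\neq\{0\}$, and you give no mechanism for that. Your sentence that these strata ``only account for the nonzero-degree parts of the regular representation'' is a restatement of the theorem.

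The fallback is circular. In the paper, ${}'E_\g=E_\g$ is Proposition \ref{p:reg rep}(1), which is deduced from Tsai's theorem. The machinery of \S\ref{s:perv fil} (hard Lefschetz, freeness over $\homog{*}{A_a}$) only forces $E_\g$ to be large. It never shows that Chern class polynomials exhaust the top cohomology.

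Also, the $W$-torsor property of $\cM\to\cM^{\Hit}$ holds only over $\cA^{\el,x}$, where $a(x)$ is regular semisimple. It is used there to obtain relative hard Lefschetz generically. For topologically nilpotent $\g$, the globalizing point satisfies $a(x)=0$, so that property says nothing about the top cohomology of $\cM_a$.
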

Tsai's method is via counting points on $\Fl_{\g}$ over $\FF_{q}$. Such point-counting is well-known to be equal to orbital integrals, and he then uses Shalika germ expansion to show that the dominant contribution comes from the regular nilpotent orbit, giving $|W|$.


Using Theorem \ref{th:main}, we get the following strengthening of Tsai's theorem.

\begin{prop}\label{p:reg rep} Suppose $\g\in \frg\lr{t}$ is regular semisimple and $t^{-1}\g$ is integral. 
\begin{enumerate}
\item The restriction map $\ph^{d_{\g}}_{\g}$ in \eqref{restr Sym to Flg top deg} is surjective (i.e., $'E_{\g}=E_{\g}$). 
\item The map $\th_{\g,\{0\}}$ is an isomorphism of $W$-modules
\begin{equation*}
\Gr^{P}_{2d_{\g}-\bu}E_{\g}\isom \cohog{2\bu}{\cB}.
\end{equation*}
In particular, as a $W$-module, $\Gr^{P}_{*}E_{\g}$ is isomorphic to the regular $W$-module.
\item The perverse filtration on $E_{\g}$ admits a canonical splitting (i.e., a perverse grading), with the degree $2d_{\g}-j$ piece given by the image of ${}_{2d_{\g}-j}S^{d_{\g}}$ under $\ph^{d_{\g}}$, which is isomorphic to $\cohog{2j}{\cB}\ot \y^{d_{\g}-j}$, for $0\le j\le N=\dim \cB$. 
\item As a $\Wa$-module, $E_{\g}$ is {\em dual} to the space of $W$-harmonic polynomials on $\cohog{2}{\cB}\cong \xch(T)_{\Qlbar}$, where the lattice part of $\Wa$ acts by translation via $\io: \xcoch(T)\incl \xch(T)_{\Qlbar}$ given by a $W$-invariant bilinear form on $\xcoch(T)$.
\end{enumerate}
\end{prop}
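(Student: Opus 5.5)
The plan is to apply Theorem \ref{th:main} with $\cO=\{0\}$ and to compare dimensions using Tsai's Theorem \ref{th:Tsai}.

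First I check that $\{0\}\in\RT(\g)$. Since $t^{-1}\g$ is integral and regular semisimple, it is $G(F)$-conjugate to its Kostant section representative $\g(a)\in\frg\tl{t}$. Hence $\g$ is conjugate into $t\frg\tl{t}$, and so $\Gr_{\g,\{0\}}\ne\vn$. For $e=0$ we have $\cB_e=\cB$ and $A_e=1$, so ${}'H_{\{0\}}=\cohog{*}{\cB}$, of total dimension $|W|$. Theorem \ref{th:main} then gives a surjection $\Gr^P_{2d_\g-\bu}{}'E_\g\surj\cohog{2\bu}{\cB}$, which yields
\begin{equation*}
|W|\le\dim{}'E_\g\le\dim E_\g=|W|,
\end{equation*}
the last equality by Theorem \ref{th:Tsai}. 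So ${}'E_\g=E_\g$, which is (1), and $\th_{\g,\{0\}}$ is a surjection between spaces of equal dimension, hence an isomorphism. This is (2). The regular-representation statement follows because $\cohog{*}{\cB}$ is the regular $W$-module.

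For (3), let $E^{(j)}$ be the image of ${}_{2d_\g-j}S^{d_\g}=\Sym^j(\cohog{2}{\cB})\ot\y^{d_\g-j}$ under $\ph^{d_\g}_\g$. Since $\cohog{*}{\cB}\subset\cohog{*}{\Fl}$ is a subring, this map factors through $\cohog{2j}{\cB}\ot\y^{d_\g-j}\subset\cohog{2d_\g}{\Fl}$, so $\dim E^{(j)}\le\dim\cohog{2j}{\cB}$. The $E^{(j)}$ together span ${}'E_\g=E_\g$, and $\sum_j\dim\cohog{2j}{\cB}=|W|=\dim E_\g$. Hence all these inequalities are equalities, the sum $\bigoplus_j E^{(j)}$ is direct, and $E^{(j)}\cong\cohog{2j}{\cB}\ot\y^{d_\g-j}$. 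By Lemma \ref{l:perv strict comp}, $P_{2d_\g-j}E_\g=\bigoplus_{j'\ge j}E^{(j')}$, so the $E^{(j)}$ split the perverse filtration. In particular $\cohog{*}{\Fl}\to E_\g$ restricted to $\cohog{*}{\cB}\ot\y^{d_\g-*}$ is an isomorphism.

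For (4), the finite $W$ part is handled by the identification just obtained. The coinvariant algebra $\cohog{*}{\cB}\cong\Sym(\xch(T)_{\Qlbar})/(\text{$W$-invariants of positive degree})$ is canonically dual to the $W$-harmonic polynomials on $\xch(T)_{\Qlbar}$, via the pairing between $\Sym(V)$ and polynomial functions on $V$ given by differentiation. For the lattice, I would use that $i_\g^*$ is $\Wa$-equivariant for Lusztig's $\Wa$-action on $\cohog{*}{\Fl}$ (via the parahoric Cartesian diagrams as in \eqref{two Flg}). I would then compute that a translation $t_\mu$, for $\mu\in\xcoch(T)$, acts on the ring $\cohog{*}{\Fl}$ as the ring automorphism fixing $\y$ and sending $\xi\in\cohog{2}{\cB}$ to $\xi+\langle\io(\mu),\xi\rangle\y$ (up to normalizing the form). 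Transported to $E_\g\cong\bigoplus_j\cohog{2j}{\cB}\y^{d_\g-j}$, this is exactly the dual, under the pairing above, of $f\mapsto f(\cdot+\io(\mu))$ on polynomials. That operation preserves harmonicity, because translation commutes with the $W$-invariant differential operators.

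The main obstacle is this last computation. One must pin down Lusztig's lattice action on $\cohog{2}{\Fl}$, show it is the stated shear, check that it intertwines $i_\g^*$, and confirm that the resulting action on $E_\g$ matches the dual of translation, including the relations with $W$ and the normalization of $\io$. I would first attempt the rank-one computation using the simple affine reflection $s_0$ and $t_{\a^\vee}=s_0s_\a$.
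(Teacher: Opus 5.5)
Your arguments for (1)--(3) are essentially the paper's. Parts (1) and (2) are the same dimension squeeze from Theorem~\ref{th:main} with $\cO=\{0\}$ and Theorem~\ref{th:Tsai}. For (3), your count of the images $E^{(j)}$ together with Lemma~\ref{l:perv strict comp} is equivalent to the paper's identification $\ker(\ph^{d_\g}_\g)=\bigoplus_j I^j\ot\y^{d_\g-j}$.

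The problem is in (4), at exactly the step you flagged. The formula you propose for the lattice action on $\cohog{2}{\Fl}$ is the transpose of the correct one. A translation fixes $V=\cohog{2}{\cB}$ pointwise and sends $\y\mt\y+\io(\mu)$; it does not fix $\y$ and shear $V$ toward $\y$. Your formula cannot be a ring automorphism of $\cohog{*}{\Fl}$ at all, because it does not preserve the ideal $I$ of relations satisfied by $V$ in the subring $\cohog{*}{\cB}\subset\cohog{*}{\Fl}$. For $G=\SL_2$ one has $\a^2=0$ in $\cohog{4}{\Fl}$. Your $t_\mu$ (with $\a\mt\a+c\y$, $c\neq0$) sends this to $2c\,\a\y+c^2\y^2\neq0$. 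For the same reason it would not descend to $E_\g\cong\Sym(V)/I$, so the claimed identification with the dual of $f\mt f(\cdot+\io(\mu))$ cannot hold for it. What you wrote down is the contragredient action, i.e.\ the one on $\homog{2}{\Fl}$.

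The rank-one computation you planned does give the right answer, done as follows.
\begin{itemize}
\item Each affine simple reflection $s$ acts on $\cohog{2}{\Fl}$ as the reflection with fixed hyperplane $\cohog{2}{\Fl_{\bP_s}}$. Its $(-1)$-eigenvector is the Chern class of the character $\a_s|_T\in\xch(T)$, by base change from $[\pt/B_s]\to[\pt/L_s]$.
\item For $s=s_0$ this character is $-\theta$, with $\theta$ the highest root, and it lies in $V$. Hence $V$ is $\Wa$-stable and $s_0|_V=s_\theta$.
\item The class $\y$ has nonzero degree on the $s_0$-line, so $s_0(\y)-\y$ is a nonzero multiple of $\theta$.
\item Therefore the translation $s_0s_\theta$ is the identity on $V$ and moves $\y$ by a nonzero multiple of $\theta$. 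In higher rank use $s_0s_\theta$, not $s_0s_\a$.
\end{itemize}

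This is the statement the paper uses. The affine hyperplane $V+\y\subset\cohog{2}{\Fl}$ is $\Wa$-stable, with the lattice acting by translations. Consequently $\Sym^{d_\g}(V\op\Qlbar\y)$ is equivariantly dual to polynomials of degree $\le d_\g$ on $V+\y$.

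Concretely, identify $\xi\y^{d_\g-j}$ with $(d_\g-j)!\,\xi\in\Sym(V)/I$. Then $t_\mu$ becomes multiplication by $e^{\io(\mu)}$, using that degrees $>\dim\cB$ vanish mod $I$ and $d_\g\ge\dim\cB$. This is exactly the transpose of $f\mt f(\cdot+\io(\mu))$ under $(D,f)=(Df)(0)$. The rest of your argument, that translations preserve harmonicity, then goes through as in the paper.
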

\begin{proof} 

(1) (2) The fact $t^{-1}\g$ is integral is equivalent that the adjoint orbit of $\g$ intersects $t\frg\tl{t}$, i.e.,  $\RT(\g)$ contains the zero orbit. By Theorem \ref{th:main}, $\Gr^{P}_{\bu}{}'E_{\g}$ maps surjectively onto $\Im(\cohog{*}{\cB}\to \cohog{*}{\cB})=\cohog{*}{\cB}$ as a $W$-module. Now $\dim \cohog{*}{\cB}=|W|$ and $\dim E_{\g}=|W|$ by Theorem \ref{th:Tsai}. Therefore, we must have ${}'E_{\g}=E_{\g}$, i.e., $\ph^{d_{\g}}_{\g}$ is surjective. Moreover, $\th_{\g,\{0\}}$ has to be an isomorphism for dimension reasons.

(3) Write $V=\cohog{2}{\cB}$ and $N=\dim \cB$. Since $t^{-1}\g$ is integral, we have $d_{\g}\ge N$. We identify $\Sym^{d_{\g}}(V)$ with $\op_{j=0}^{N}\Sym^{j}(V)\ot \y^{d_{\g}-j}$. Let $I=\ker(\Sym(V)\to \cohog{*}{\cB})$ and $I^{j}=I\cap \Sym^{j}(V)$ be the homogeneous pieces of $I$. Since the map $\ph_{\g}$ factors through $\cohog{*}{\cB}\ot \Ql[\y]$, we see that $\op_{j=0}^{N}I^{j}\ot \y^{d_{\g}-j}\subset \ker(\ph^{2d_{\g}}_{\g})$. Since $\dim E_{\g}=|W|$,  for dimension reasons, we must have the equality $\ker(\ph^{2d_{\g}}_{\g})=\bigoplus_{j=0}^{N}I^{j}\ot \y^{d_{\g}-j}$. This identifies $E_{\g}$ with the following quotient of $\Sym^{d_{\g}}(V\op \Qlbar \y)$
\begin{equation}\label{Eg as quot}
E_{\g}\cong \Sym^{d_{\g}}(V\op \Qlbar \y)/\left(\bigoplus_{j=0}^{N}I^{j}\ot \y^{d_{\g}-j}\right)\cong \bigoplus_{j=0}^{N}\cohog{2j}{\cB}\ot \y^{d_{\g}-j} \cong \Sym(V)/I.
\end{equation}
This gives the required splitting of the perverse filtration.

(4)The canonical action of $\Wa$ on $\cohog{*}{\Fl}$ when restricted to $\cohog{2}{\Fl}\cong V\op \Qlbar \y$ takes the following form: $W$ acts on $V$ by the reflection representation and acts trivially on $\y$; $\l\in \xcoch(T)$ acts trivially on $V$ and sends $\y$ to $\y+\io_{\y}(\l)$, where $\io_{\y}: \xcoch(T)\to V\cong \xch(T)_{\Qlbar}$ is induced from the $W$-invariant bilinear form on $\xcoch(T)$ given by $\y$. This $\Wa$-action induces a $\Wa$-action on the quotient $\Sym(V)/I$ of $\Sym(\cohog{2}{\Fl})$, making \eqref{Eg as quot} $\Wa$-equivariant.

View $\Sym(V)$ as the ring of differential operators on the polynomial ring $\Qlbar[V]\cong \Sym(V^{*})$ that are invariant under translations. Consider the graded-perfect pairing
\begin{equation*}
\Sym(V)\times \Qlbar[V]\to \Qlbar
\end{equation*}
defined by $(D, f)=(Df)(0)$, where $D\in \Sym(V)$ and $f\in \Qlbar[V]$. Recall the space of $W$-harmonic polynomials $\cH\subset \Qlbar[V]$ consists of those annihilated by the $W$-invariant differential operators $\Sym^{>0}(V)^{W}$, which generate $I$. Therefore, restricting the second argument of the above pairing to $\cH$, the pairing factors through a perfect pairing
\begin{equation}\label{pairing harmonic}
\Sym(V)/I\times \cH\to \Qlbar
\end{equation}
Viewing $V$ as the affine space $V+\y\subset V\op \Qlbar\y$, the $\Wa$-action on the latter restricts to an action of $\Wa$ on $V$ by affine linear transformations: $W$ acts by the reflection representation and $\l\in \xcoch(T)$ acts by translation by $\io_{\y}(\l)$. From the construction, the pairing \eqref{pairing harmonic} is $\Wa$-equivariant. By the $\Wa$-equivariant isomorphism \eqref{Eg as quot}, we conclude that $E_{\g}$ is dual to $\cH$ as $\Wa$-modules.

%
%



\end{proof}

\subsection{Description of $E_{\g}$ in type $A$}\label{ss:A}
Let $G=\SL_{n}$. We know from \cite[Theorem 1.18]{Y-RT} that every regular semisimple and topologically nilpotent $\g\in\frg\lr{t}$ has a unique minimal reduction type $\RT_{\min}(\g)$. Let us describe the Jordan type of $\cO$. 

Let $f(x)\in F[x]$ be the characteristic polynomial of $\g$, and $f(x)=\prod_{i\in I}f_{i}(x)$ be its factorization into monic irreducible polynomials over $F$. Let $\val: \ov F^{\times}\to \QQ$ be the extension of the discrete valuation on $F$ such that $\val(t)=1$. Then the roots of each $f_{i}$ have the same valuation $v_{i}\in \QQ_{>0}$, whose denominator is $\deg f_{i}$. For each positive rational number $q=\frac{a}{b}$ in lowest terms, we define a partition $\l_{q}$ of $b$ as follows: if $q<1$, then $\l_{q}$ has exactly $a$ parts and each part is either $\lfloor \frac{b}{a}\rfloor$ or $\lceil \frac{b}{a}\rceil$; if $q\ge1$ then $\l_{q}$ is the trivial partition (all $1$s). In other words, $\l_{a/b}$ is the most balanced partition of $b$ with $\min\{a,b\}$ parts. 

\begin{lemma}\label{l:min red A}
The Jordan type of the minimal reduction type of $\g$ is the partition $\l(\g)$ of $n$ given by concatenating $\{\l_{v_{i}}\}_{i\in I}$.
\end{lemma}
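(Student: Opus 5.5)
The plan is to use the result from \cite[Theorem 1.18]{Y-RT} quoted above: $\RT_{\min}(\g)$ consists of a single orbit. Given that, it suffices to prove two things.
\begin{enumerate}
\item[(a)] The nilpotent orbit with Jordan type $\l(\g)$ lies in $\RT(\g)$.
\item[(b)] For every $\cO\in \RT(\g)$, the Jordan type of $\cO$ dominates $\l(\g)$. Equivalently, for every $m\ge1$ and $e\in\cO$, $\dim\ker(e^{m})\le \dim\ker(J_{\l(\g)}^{m})$, where $J_{\l}$ denotes a nilpotent matrix of Jordan type $\l$.
\end{enumerate}
Then $\l(\g)$ is the unique minimal element of $\RT(\g)$. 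Throughout I work with lattices: a point of $\Gr_{\g}$ for $\SL_{n}$ is a $k\tl{t}$-lattice $\L\subset F^{n}$ with $\g\L\subset\L$ and the standard relative determinant, and the reduction type is the Jordan type of $\g$ acting on $\L/t\L$. The determinant normalization can be adjusted by translating with an element of $LG_{\g}$ (or of the $\GL_{n}$-centralizer, which acts on $\GL_{n}$-lattices and does not change the Jordan type mod $t$). So I will freely use $\GL_{n}$-lattices.

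For (a), decompose $F^{n}=\bigoplus_{i\in I}E_{i}$ with $E_{i}=F[x]/(f_{i})$ and $\g$ acting by $x$. Write $v_{i}=a_{i}/b_{i}$ in lowest terms. If the residue field extension of $E_i/F$ is trivial (automatic, $k$ algebraically closed), then $E_{i}$ is a field of degree $\deg f_{i}$ with ramification index divisible by $b_{i}$. In the basic case $\deg f_{i}=b_{i}$, we have $E_{i}=k\lr{\varpi}$ with $\g=u\varpi^{a_{i}}$, $u$ a unit. Take $\L_{i}=\cO_{E_{i}}$. Then $\L_{i}/t\L_{i}=k[\varpi]/\varpi^{b_{i}}$, on which $\g$ acts as $\varpi^{a_{i}}$ times a unit.
\begin{itemize}
\item If $a_{i}<b_{i}$, its Jordan type is the most balanced partition of $b_{i}$ into $a_{i}$ parts (the chains $\varpi^{r},\varpi^{r+a_i},\dots$ for $0\le r<a_i$, after absorbing the unit by a change of uniformizer).
\item If $a_{i}\ge b_{i}$, then $\g\in t\cO_{E_{i}}$ and the action is zero.
\end{itemize}
In the general case $\deg f_{i}$ is a multiple of $b_{i}$. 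Here I would choose $\L_{i}$ analogously, as a direct sum of $\deg f_i/b_i$ copies of such a chain lattice, adapted to an $\cO_{F}$-basis. I will check that this gives $\l_{v_{i}}$ concatenated appropriately; this is a computation I would verify, given that the statement says $\l_{v_i}$ is a partition of the denominator. Then $\L=\bigoplus\L_{i}$ realizes $\l(\g)$.

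For (b), let $\L$ be any $\g$-stable lattice with $e=\g|_{\L/t\L}$. Then $\dim\ker(e^{m})=\dim \L/(\g^{m}\L+t\L)$. For a single block, the bound comes from two facts. First, $\L/(\g^{m}\L+t\L)$ is a quotient of $\L/t\L$, so its dimension is at most $b$. Second, it is a quotient of $\L/\g^{m}\L$, whose length is $\val\det(\g^{m})=mb\cdot(a/b)=ma$. Hence $\dim\ker e^{m}\le\min(ma,b)$. An elementary check shows that for the balanced partition of $b$ into $\min(a,b)$ parts, $\sum_{\text{parts }p}\min(p,m)=\min(ma,b)$, which equals $\dim\ker J_{\l_{a/b}}^{m}$. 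So the bound is sharp and gives dominance.

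The main obstacle is several blocks, because $\L$ need not split as $\bigoplus(\L\cap E_{i})$, so the naive total bound $\min(m\sum a_{i},\sum b_{i})$ is too weak. My plan is to filter $\L$ by $\g$-stable sublattices: $\L_{\le j}=\L\cap(E_{1}\oplus\cdots\oplus E_{j})$, with successive quotients $\ov\L_{j}$ that are $\g$-stable lattices in $E_{j}$. Then I will argue that $\dim \L/(\g^{m}\L+t\L)=\dim\mathrm{coker}(\g^{m}\text{ on }\L\ot k)$ is subadditive in $\g$-stable filtrations. This needs a snake-lemma argument on $\L\ot_{k\tl{t}}k$: the functor $M\mapsto M/(\g^{m}M+tM)$ is right exact, so the cokernel of the total is bounded by the sum of the cokernels on the graded pieces, once the graded pieces are torsion-free, which they are by saturation. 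Combining this with the single-block bound applied to each $\ov\L_{j}$ gives $\dim\ker e^{m}\le\sum_{i}\min(ma_{i},b_{i})=\dim\ker J_{\l(\g)}^{m}$, which proves (b).
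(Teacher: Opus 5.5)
Your proof is correct, and it takes a genuinely different route from the paper's. The paper first reduces to the elliptic case using the compatibility of minimal reduction types with Levi subgroups \cite[Corollary 3.4]{Y-RT}. For elliptic $\g$ it then uses the skeleton machinery of \cite{Y-RT}. Since $G_{\g}$ is a Coxeter torus, the skeleton of $\Gr$ is the single point $\cO_{E}$. By \cite[Lemma 7.3]{Y-RT}, the minimal reduction type is the orbit of $\g$ acting on $\cO_{E}/t\cO_{E}$, and the paper computes its Jordan type.

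Your part (a) is the same computation at the same lattice, with $\bigoplus_{i}\cO_{E_{i}}$ standing in for the Levi reduction. Your part (b), however, replaces both citations by the elementary bound $\dim\ker(e^{m})\le\sum_{i}\min(m\deg f_{i}\,v_{i},\deg f_{i})$. You get it from the rank and the length of $\ov\L_{j}/\g^{m}\ov\L_{j}$ on the graded pieces of the saturated filtration by $\L\cap(E_{1}\oplus\cdots\oplus E_{j})$, together with right exactness of $M\mapsto M/(\g^{m}M+tM)$.

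Your route gives a self-contained, lattice-theoretic proof that $\RT(\g)$ has a \emph{least} element. So you do not actually need \cite[Theorem 1.18]{Y-RT}: (a) and (b) reprove it for $\SL_{n}$. The paper's route is shorter given \cite{Y-RT}. Its mechanism (skeleta of tori, Levi compatibility) also does not depend on describing nilpotent orbits by Jordan types of lattice endomorphisms.

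A few small repairs are needed.

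Your hesitation about the general case is justified. The denominator $b_{i}$ can be a proper divisor of $\deg f_{i}$. For example, multiplication by $\varpi^{2}+\varpi^{3}$ on $E=k\lr{\varpi}$, $\varpi^{4}=t$, is an elliptic element of $\sl_{4}\lr{t}$ of trace $0$ with $v=1/2$. In that case the $i$-th block must contribute $\deg f_{i}/b_{i}$ copies of $\l_{v_{i}}$, since concatenating the $\l_{v_{i}}$ alone does not give a partition of $n$.

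Do not try to split $\L_{i}$ into $\g$-stable chain lattices; this is impossible because $F[\g]=E_{i}$ is a field. Instead take $\L_{i}=\cO_{E_{i}}$ again. Then $\g$ acts on $k[\varpi]/\varpi^{\deg f_{i}}$ as a unit times $\varpi^{\deg f_{i}v_{i}}$, so $\dim\ker e^{m}=\min(m\deg f_{i}v_{i},\deg f_{i})$. The Jordan type is therefore the most balanced partition of $\deg f_{i}$ into $\min(\deg f_{i}v_{i},\deg f_{i})$ parts, i.e.\ $\deg f_{i}/b_{i}$ copies of $\l_{v_{i}}$. Correspondingly, your single-block bound $\min(ma,b)$ should read $\min(m\deg f_{i}v_{i},\deg f_{i})$, and then everything matches.

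Elements of $LG_{\g}$ have determinant $1$, so they cannot fix the volume normalization. Use the $\GL_{n}$-centralizer $\prod_{i}E_{i}^{\times}$ instead; a uniformizer of $E_{1}$ changes the relative volume by $1$.

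Finally, subadditivity needs only right exactness. Torsion-freeness of $\ov\L_{j}$ is what makes it a $\g$-stable lattice in $E_{j}$, so that the single-block bound applies to it.
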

\begin{proof}
The factorization pattern of $f$ implies that the adjoint orbit of $\g$ meets a Levi subgroup $M\subset G$ whose block sizes are given by $\{\deg f_{i}\}_{i\in I}$. We may assume $\g\in \fm\lr{t}$. By \cite[Corollary 3.4]{Y-RT}, the minimal reduction type $\cO$ of $\g$ viewed as an element of $\frg\lr{t}$ is the $G$-orbit of the minimal reduction type $\cO_{M}$ of $\g$ viewed as an element of $\fm\lr{t}$. This allows us to reduce to the case where $f$ is irreducible, i.e., $\g$ is elliptic.

We compute the minimal reduction type $\cO$ assuming $\g$ is elliptic. Let $E=F[x]/(f(x))$, a degree $n$ field extension of $F$. Let $v=\frac{d}{n}$ be the valuation of a root of $f$, so that $(d,n)=1$. 

If $v\ge1$, then $t^{-1}\g$ is integral. The minimal reduction type is the zero orbit, and its Jordan type is the trivial partition, which is the same as $\l_{v}$ by definition. 

Consider the case $v<1$. In \cite[Definition 7.1]{Y-RT} we defined the notion of the skeleton of $\Gr_{G}$ with respect to a maximal $F$-torus $T$ in $G$: it is the fixed point locus of the neutral component of $LT$ on $\Gr_{G}$. In our case, since $G_{\g}$ is a maximal torus of Coxeter type, by \cite[Proposition 7.6(1)]{Y-RT}, the skeleton of $\Gr_{G}$ with respect to $G_{\g}$ is a single point. If we identify $\Gr_{G}$ with the moduli space of $\cO_{F}$-lattices in $E$ that have the same volume as $\cO_{E}$, then the lattice $\cO_{E}$ itself is the unique point in the skeleton. By \cite[Lemma 7.3]{Y-RT}, $\cO$ is the orbit of the nilpotent endomorphism $\ov\g: \cO_{E}/(t)\to  \cO_{E}/(t)$ (the reduction of multiplication by $\g$). Choosing a uniformizer $t^{1/n}$ of $E$ and using the $k$-basis $\{1,t^{1/n},\cdots, t^{(n-1)/n}\}$ of $\cO_{E}/(t)$, we see that the Jordan type of $\ov\g$ is the most balanced partition $\l$ of $n$ with $d$ parts, i.e., $\l_{v}$. 
\end{proof}

On the other hand, we consider the total cohomology $H_{\cO}$ of a nilpotent orbit $\cO$ of $\frg=\sl_{n}$. Suppose $M\subset G$ is a Levi subgroup and $\cO$ is the $G$-orbit of a nilpotent orbit $\cO_{M}$ of $\fm$. By the Alvis-Lusztig induction formula for total cohomology of Springer fibers \cite[Formula (e')]{AL}, we have
\begin{equation}\label{ind total Spr}
H_{\cO}\cong \Ind^{W}_{W_{M}}H_{\cO_{M}}.
\end{equation}
Here we use that $\pi_{0}(G_{e})$ acts trivially on the $\cohog{*}{\cB_{e}}$ so that $H_{\cO}=\cohog{*}{\cB_{e}}$  for all $e\in \cO$.

Now consider the minimal reduction type $\cO$ of $\g$. By Lemma \ref{l:min red A}, $\cO$ intersects the regular class $\cO_{M_{\l(\g)},\reg}$ in a Levi subgroup $M_{\l(\g)}\subset G$ whose block sizes are given by $\l(\g)$. Since $H_{\cO_{M_{\l(\g)},\reg}}$ is the trivial representation of $W_{M_{\l(\g)}}=W_{\l(\g)}$, we conclude from \eqref{ind total Spr} that
\begin{equation}\label{total Spr A}
H_{\cO}\cong\Ind^{W}_{W_{\l(\g)}}\Qlbar
\end{equation}
as $W$-modules. 


The following result is essentially due to Kivinen--Tsai \cite{KT}.

\begin{theorem}\label{th:A} Let $G=\SL_{n}$ and $\g\in\frg\lr{t}$ be a regular semisimple and topologically nilpotent element. Let $\cO$ be the minimal reduction type of $\g$, whose Jordan type is $\l(\g)$ described in \S\ref{ss:A}. Then the map $\ph^{d_{\g}}_{\g}$ in \eqref{restr Sym to Flg top deg} is surjective, and $\th_{\g,\cO}$ is an isomorphism of graded $W$-modules
\begin{equation*}
\Gr^{P}_{2d_{\g}-\bu}E_{\g}\cong H^{2\bu}_{\cO}.
\end{equation*}
As $W$-modules, both sides above are isomorphic to $\Ind^{W}_{W_{\l(\g)}}\Qlbar$.
\end{theorem}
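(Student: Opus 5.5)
The plan is to combine Theorem \ref{th:main} with a dimension count coming from Kivinen--Tsai, in the same way the proof of Proposition \ref{p:reg rep} combined Theorem \ref{th:main} with Tsai's count.

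\emph{Step 1: comparing $'H_{\cO}$ with $H_{\cO}$.} In type $A$ the restriction map $i_e^*:\cohog{*}{\cB}\to\cohog{*}{\cB_e}$ is surjective for every nilpotent $e$. This is a classical result (De Concini--Procesi, or Hotta--Springer), and $A_e$ acts trivially. Hence $'H_{\cO}=H_{\cO}$. By \eqref{total Spr A} it is isomorphic to $\Ind^{W}_{W_{\l(\g)}}\Qlbar$, which has dimension $n!/\prod_i\l(\g)_i!$.

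\emph{Step 2: the lower bound.} Apply Theorem \ref{th:main} to the minimal reduction type $\cO$. This gives a surjection $\th_{\g,\cO}:\Gr^{P}_{2d_{\g}-\bu}{}'E_{\g}\surj H^{2\bu}_{\cO}$. It follows that
\[
\dim E_{\g}\ \ge\ \dim{}'E_{\g}\ \ge\ \dim H_{\cO}=\dim\Ind^{W}_{W_{\l(\g)}}\Qlbar .
\]

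\emph{Step 3: the upper bound, the main input.} I would invoke Kivinen--Tsai \cite{KT}. They compute the number of $A_{\g}$-orbits of top-dimensional irreducible components of $\Fl_{\g}$ for $\SL_n$, which equals $\dim E_{\g}$, and the count they obtain matches $\dim\Ind^{W}_{W_{\l(\g)}}\Qlbar$. Their method relates components to reduction strata through the decomposition of Lemma \ref{l:decomp top}, together with a point count or orbital-integral/germ argument analogous to Tsai's. If their result is phrased via Corollary \ref{c:rel crit}, the equality becomes
\[
\sum_{\cO'\in\RT(\g)}\dim \hBM{2d_{\g}-2d_{\cO'}}{\Gr_{\g,\cO'}}_{A_{\g}}\cdot\dim E_{\cO'}=\dim \Ind^{W}_{W_{\l(\g)}}\Qlbar .
\]
Here one checks that the multiplicities agree with the Kostka numbers appearing in the decomposition of the induced module.

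The delicate point, and the one I expect to be the main obstacle, is matching the Kivinen--Tsai statement precisely with $\l(\g)$ as determined in Lemma \ref{l:min red A}. This includes the non-elliptic case. For that case I would reduce via the Levi compatibility used in Lemma \ref{l:min red A}, together with the factorization of $\Fl_{\g}$-components and $A_{\g}$ along the Levi $M$ determined by the factorization of the characteristic polynomial.

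\emph{Step 4: conclusion.} Once Step 3 is established, all the inequalities in Step 2 become equalities. Therefore $'E_{\g}=E_{\g}$, i.e.\ $\ph^{d_{\g}}_{\g}$ is surjective. Moreover, $\th_{\g,\cO}$ is a surjection between graded spaces of equal total dimension, so it is an isomorphism of graded $W$-modules. Both sides are then isomorphic to $\Ind^{W}_{W_{\l(\g)}}\Qlbar$ by Step 1.
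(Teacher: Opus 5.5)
Your proposal is correct and follows the paper's proof. Theorem \ref{th:main} gives the surjection, which reduces everything to $\dim E_{\g}=\dim H_{\cO}$. That equality comes from Kivinen--Tsai after reducing to elliptic $\g$. The paper makes precise the Levi reduction you only sketch: Lemma \ref{l:ASF in Levi} uses the semi-infinite orbits $\Sig_{w}=(LP)w\bI/\bI$, $w\in[W_{M}\bs W]$, to get $\dim E_{G,\g}=|W/W_{M}|\dim E_{M,\g}$, and then the neutral component of $\Fl_{M,\g}$ decomposes as a product, so \cite{KT} is only needed in the elliptic case.

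Your Step 1 (surjectivity of $\cohog{*}{\cB}\to\cohog{*}{\cB_{e}}$ in type $A$, giving ${}'H_{\cO}=H_{\cO}$) makes explicit a point the paper's proof uses without stating it.
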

\begin{proof}
In view of the surjectivity of $\th_{\g,\cO}$ proved in Theorem \ref{th:main}, it suffices to show that $\dim E_{\g}=\dim H_{\cO}$.

We first reduce the general case to the case where $\g$ is elliptic. Changing $\g$ in its adjoint orbit if necessary, there is a Levi subgroup $M\subset G$ whose block sizes are the degrees $\{\deg f_{i}\}$ of irreducible factors of the characteristic polynomial of $\g$, such that $\g\in \fm\lr{t}$ and is elliptic there. By Lemma \ref{l:ASF in Levi}, $\dim E_{G,\g}=|W/W_{M}|\dim E_{M,\g}$. On the other hand, by \cite[Corollary 3.4]{Y-RT}, the minimal reduction type $\cO$ of $\g$ viewed as an element of $\frg\lr{t}$ is the $G$-orbit of the minimal reduction type $\cO_{M}$ of $\g$ viewed as an element of $\fm\lr{t}$. By \eqref{ind total Spr}, we have $\dim  H_{\cO}=|W/W_{M}|\dim H_{\cO_{M}}$. It then suffices to show that $\dim E_{M,\g} = \dim H_{\cO_{M}}$. The neutral component of $\Fl_{M,\g}$ is isomorphic to a product of $\Fl_{\SL_{n_{i}},\g_{i}}$ where $n_{i}=\deg f_{i}$ are the block sizes of $M$, and $\g_{i}\in \fm_{i}\lr{t}$ has irreducible characteristic polynomial $f_{i}$. We thus reduce to the case where $\g$ is elliptic, which we now assume.

Let $v$ be the valuation of any root of $f(x)$, the characteristic polynomial of $\g$. By Lemma \ref{l:min red A}, the minimal reduction type $\cO$ of $\g$ has Jordan type $\l_{v}$. The result of Kivinen and Tsai \cite[Remark 8.6 and Theorem 8.8]{KT} says that $\dim E_{\g}=|W/W_{\l_{v}}|$ in this case. On the other hand, \eqref{total Spr A} implies $\dim H_{\cO}=|W/W_{\l_{v}}|$. The equality $\dim E_{\g}=\dim H_{\cO}$ is proved.
\end{proof}

\begin{lemma}\label{l:ASF in Levi}
Let $G$ be a reductive group and $M\subset G$ a Levi subgroup. Let $\g\in \fm\lr{t}$ be an element that is regular semisimple and integral as an element of $\frg\lr{t}$. Denote by $\Fl_{G,\g}$ and $\Fl_{M,\g}$ the affine Springer fibers of $\g$ in $\Fl_{G}$ and $\Fl_{M}$, and define $E_{G,\g}$ and $E_{M,\g}$ accordingly. Then
\begin{equation}\label{eq E}
\dim E_{G,\g}=|W/W_{M}|\dim E_{M,\g}.
\end{equation}
\end{lemma}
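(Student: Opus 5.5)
The plan is to compare the top-dimensional irreducible components of $\Fl_{G,\g}$ and $\Fl_{M,\g}$, modulo the lattice actions, via the standard parabolic (Iwasawa) decomposition. Fix a parabolic $Q=MN\supset B$ with Levi $M$. By the Iwasawa decomposition $LG=LQ\cdot L^{+}G$, $\Fl_G$ is stratified by the $LQ$-orbits, indexed by $W_M\bs W$; more precisely, $\Fl_G=\bigsqcup_{w\in W_M\bs W}LQ\cdot \dot w\bI$. Each stratum fibers over $\Fl_M$ (twisted by $w$) via $LQ\to LM$, with fibers given by loop groups of $N$ modulo $L^+$-type subgroups.

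First, intersect each stratum with $\Fl_{G,\g}$. Since $\g\in\fm\lr{t}$ is regular semisimple in $\frg$, $\ad(\g)$ is invertible on $\frn\lr{t}$. So for $g=mn$ with $m\in LM$ and $n\in LN$, the condition $\Ad(g^{-1})\g\in\Lie(\dot w\bI\dot w^{-1})$ becomes two conditions. The first is $\Ad(m^{-1})\g\in\Lie$ of the corresponding Iwahori of $M$, which gives a point of $\Fl_{M,\g}$ for the Borel $M\cap{}^wB$. The second is a condition on $n$ which, after the standard contraction/filtration argument on $N$ (as in Kazhdan--Lusztig's dimension computation, or the proof of \cite[\S5]{KL}), cuts out an affine space bundle over that point. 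Its fiber dimension is $\val\det(\ad\g|_{\frn\lr{t}/\cdots})$, a constant independent of $m$, equal to $d_{G,\g}-d_{M,\g}$. This constancy is the key input; it is the usual formula $d_\g=\frac12(\val\D_G(\g)-(r-\textup{rk}_F G_\g))$ applied to $G$ and $M$, which have the same torus $G_\g$.

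Second, each stratum $S_w\subset\Fl_{G,\g}$ is therefore an iterated affine bundle over a copy of $\Fl_{M,\g}$ of dimension exactly $d_{G,\g}$. Together with equidimensionality of $\Fl_{G,\g}$, this shows that the top-dimensional irreducible components of $\Fl_{G,\g}$ are exactly the closures of preimages of top components of $\Fl_{M,\g}$, one family for each coset $w\in W_M\bs W$. These identifications are equivariant for $LG_\g=LM_\g$, hence for $A_\g$, which is the same group for $G$ and $M$. Consequently
\[
\homog{2d_{G,\g}}{\Fl_{G,\g}}\cong\bigoplus_{w\in W_M\bs W}\homog{2d_{M,\g}}{\Fl_{M,\g}}
\]
as $A_\g$-modules. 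Taking coinvariants and dualizing gives \eqref{eq E}.

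I expect the main obstacle to be the fiber-dimension step: showing that the fibers over each point of $\Fl_{M,\g}$ are affine spaces (or at least irreducible of constant dimension) for every coset $w$, and not only for $w=1$. For this I would use a $T$-stable filtration of $\frn$ by root subspaces ordered by height relative to ${}^wB$, and solve for $n$ successively using the invertibility of $\ad\g$ on each graded piece. A secondary point is to check that $\pi_0$ components are compatible, that is, that $A_\g$ acts on the index set in a way that yields a tensor factor of the permutation module rather than mixing cosets. This holds because $LM_\g$ preserves each $LQ$-orbit.
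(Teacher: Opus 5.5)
Your proposal is correct and is essentially the paper's own argument. You partition $\Fl_G$ by the $LQ$-orbits $LQ\dot w\bI/\bI$, $w\in W_M\bs W$, use \cite[\S5]{KL} to make each piece of $\Fl_{G,\g}$ an iterated affine-space fibration of dimension $d_{G,\g}$ over $\Fl_{M,\g}$, and then read off an $A_\g$-equivariant bijection $\Irr(\Fl_{G,\g})\cong\bigsqcup_{w\in W_M\bs W}\Irr(\Fl_{M,\g})$, which gives the formula because $\dim E_{G,\g}=|\Irr(\Fl_{G,\g})/A_\g|$ and likewise for $M$. Two technical corrections: first, the full $LQ$-orbits are not locally closed (for $\SL_2$ and $Q=B$ each of the two $LB$-orbits is dense in $\Fl$), so your ``stratification'' should be the finer one into semi-infinite orbits $LN\cdot(LM)^{\om}\dot w\bI/\bI$, $\om\in\pi_1(M)$, which the paper uses; second, the filtration of $\mathfrak{n}$ used to solve for $n$ must be $M$-stable (e.g.\ by weights of $Z(M)^{\circ}$, or the lower central series of $N$) rather than your height filtration by root spaces, because $\Ad(m^{-1})\g\in\fm\lr{t}$ is in general not toral and moves root spaces of $\mathfrak{n}$ across heights, whereas $\textup{ad}(\Ad(m^{-1})\g)$ preserves each $Z(M)^{\circ}$-weight piece and is invertible there over $F$.
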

\begin{proof}
Choose a parabolic subgroup $P\subset G$ containing $M$ as a Levi factor. We may assume $P$ is standard, i.e., $P\supset B$. Let $N_{P}$ be the unipotent radical of $P$. For each connected component $(LM)^{\om}$ of $LM$, where $\om\in \pi_{1}(M)$, and each $w\in  [W_{M}\bs W]$ (minimal length representative with respect to the length function on $W$ given by $B$), consider the semi-infinite orbit $\Sig^{\om}_{w}:=(LN_{P})(LM)^{\om} w \bI/\bI\subset \Fl$.  The intersection $\Fl_{G,\g}\cap \Sig^{\om}_{w}$ has a canonical map $q^{\om}_{w}: \Fl_{G,\g}\cap \Sig^{\om}_{w}\to \Fl_{M,\g}\cap \Fl^{\om}_{M}$ (where $\Fl^{\om}_{M}=(LM)^{\om}/\bI_{M}$) by sending $umw\bI$ to $m\bI_{M}$. The same argument as \cite[\S5, Proposition 1]{KL} shows that $q^{\om}_{w}$ is an iterated affine space fibration, and that $\Fl_{G,\g}\cap \Sig^{\om}_{w}\to \Fl_{M,\g}$ has the same dimension as $\Fl_{G,\g}$. Let $\Sig_{w}=(LP) w\bI/\bI\subset \Fl$, which is the union of $\Sig^{\om}_{w}$ for varying $\om\in \pi_{1}(M)$.  The maps $q^{\om}_{w}$ induce a bijection between the sets of irreducible components $\frq_{w}: \Irr(\Fl_{G,\g}\cap \Sig_{w})\isom  \Irr(\Fl_{M,\g})$ that is equivariant under the translation actions by $A_{\g}$. It then induces a bijection on $A_{\g}$-orbits $\ov\frq_{w}: \Irr(\Fl_{G,\g}\cap \Sig_{w})/A_{\g}\isom  \Irr(\Fl_{M,\g})/A_{\g}$.
Now $\{\Sig_{w}\}_{w\in [W_{M}\bs W]}$ form a partition of $\Fl_{G}$, we conclude that $|\Irr(\Fl_{G,\g})/A_{\g}|=|W/W_{M}|\cdot|\Irr(\Fl_{M,\g})/A_{\g}|$, which then implies \eqref{eq E}.\end{proof}

\quash{
It is natural to make the following conjecture.
\begin{conj} Suppose $G$ is simply-connected. Let $\g\in\frg\lr{t}$ be a topologically nilpotent element. Then the map
\begin{equation*}
\Sym^{d}(\cohog{2}{\Fl_{G}})\to \cohog{2d}{\Fl_{\g}}^{A_{\g}\rtimes B_{\g}}
\end{equation*}
is surjective. Here, there is a monodromy action of a certain braid group $B_{\g}$ coming from the space of $\g$ with the same root valuation data, and the map should land in the invariants of $A_{\g}\rtimes B_{\g}$.
\end{conj}

}


\begin{thebibliography}{99}

\bibitem{AL}
D.Alvis, G. Lusztig,
On Springer's correspondence for simple groups of type $E_{n}$   ($n=6,7,8$). 
Math. Proc. Cambridge Philos. Soc. 92 (1982), no. 1, 65--78.

\bibitem{BBD}
A. Beilinson, J.Bernstein, P.Deligne, O.Gabber,
Faisceaux pervers.
Ast\'erisque 2018, no. 100, vi+180 pp.


\bibitem{D-HodgeII}
P.Deligne,
Th\'eorie de Hodge, II.
Publications Math\'ematiques de l'IH\'ES. 40 (1971) pp. 5--58.


\bibitem{D-poids}
P.Deligne,
Poids dans la cohomologie des vari\'et\'es alg\'ebriques.
Actes du congr\`es international des math\'ematiciens (Vancouver). (1974) pp. 79--85.

\bibitem{F}
G.Faltings,
Stable G-bundles and projective connections.
J. Algebraic Geom. 2 (1993), no. 3, 507--568.


\bibitem{KL}
D.Kazhdan, G. Lusztig,
Fixed point varieties on affine flag manifolds.
Israel J. Math. 62 (1988), no. 2, 129--168.

\bibitem{KT}
O.Kivinen, C-C.Tsai,
Shalika germs for tamely ramified elements in $\GL_n$, arXiv:2209.02509.

\bibitem{LN}
G.Laumon, B-C. Ng\^o, 
Le lemme fondamental pour les groupes unitaires.
Ann. of Math. (2) 168 (2008), no. 2, 477--573.


\bibitem{L}
G.Lusztig, 
Affine Weyl groups and conjugacy classes in Weyl groups.
Transform. Groups 1 (1996), no. 1-2, 83--97.


\bibitem{LComm}
G.Lusztig, 
Comments to my papers, 
arxiv:1707.09368.

\bibitem{MY}
D. Maulik, Z. Yun,
Macdonald formula for curves with planar singularities.
J. Reine Angew. Math. 694 (2014), 27--48.


\bibitem{MS}
L.Migliorini, V. Shende,
A support theorem for Hilbert schemes of planar curves.
J. Eur. Math. Soc. 15 (2013), no. 6, 2353--2367.

\bibitem{NgoFL}
B-C. Ng\^o, 
Le lemme fondamental pour les alg\`ebres de Lie.
Publ. Math. Inst. Hautes \'Etudes Sci. No. 111 (2010), 1--169.

\bibitem{OY}
A. Oblomkov, Z. Yun,
Geometric representations of graded and rational Cherednik algebras.
Adv. Math. 292 (2016), 601--706.

\bibitem{Spa}
N.Spaltenstein,
Classes unipotentes et sous-groupes de Borel.
Lecture Notes in Math., 946. Springer-Verlag, Berlin-New York, 1982, ix+259 pp.

\bibitem{Spr}
T. A. Springer,
A purity result for fixed point varieties in flag manifolds.
J. Fac. Sci. Univ. Tokyo Sect. IA Math. 31 (1984), no. 2, 271--282.


\bibitem{Tsai}
C-C. Tsai,
Components of affine Springer fibers.
Int. Math. Res. Not. IMRN 2020, no. 6, 1882--1919. 

\bibitem{YGS}
Z.Yun, Global Springer theory.
Advances in Math. 228 (2011), 266--328.

\bibitem{YLD}
Z.Yun, Langlands duality and global Springer theory.
Compos. Math. 148 (2012), no. 3, 835--867.

\bibitem{YSph}
Z.Yun, Spherical part of local and global Springer actions
Math. Ann. 359 (2014), no. 3-4, 557--594.

\bibitem{Y-RT}
Z.Yun, Minimal reduction types and the Kazhdan-Lusztig map.
Indag. Math. (N.S.) 32 (2021), no. 6, 1240--1274.


\end{thebibliography}
\end{document}